\newcolumntype{C}[1]{>{\centering\arraybackslash}p{#1}}
\newtheorem{theorem}{Theorem}
\newtheorem*{theorem*}{Theorem}
\newtheorem{prop}[theorem]{Proposition}
\newtheorem{corollary}[theorem]{Corollary}
\newtheorem*{prop*}{Proposition}
\newtheorem{lemma}[theorem] {Lemma}
\newtheorem{defi}{Definition}
\newtheorem{claim}{Claim}
\theoremstyle{definition}
\newtheorem{rem}{Remark}
\theoremstyle{plain}
\newtheorem*{ex}{Example}
\def\R{\mathbb{R}}
\newcommand{\pp}[2]{\frac{\partial#1}{\partial#2}}
\newcommand{\T}{\mathbb{T}}
\newcommand{\suchthat}{\;\ifnum\currentgrouptype=16 \middle\fi|\;}
\newcommand\restr[2]{{
  \left.\kern-\nulldelimiterspace
  #1
  \vphantom{\big|}
  \right|_{#2}
  }}
\newcommand\DrawGenus[7]{% (x, y), size, size*1.5, rotation, color, thickness
  \pgfmathsetmacro{\xstart}{#1 - (0.985*#4)}
  \pgfmathsetmacro{\ystart}{#2 + (0.2*#3)}
  %\draw (#1, #2) node[] {axis};

	\draw[color = #6, rotate around={#5:(#1,#2)}, #7] (\xstart, \ystart) arc (190:350:#4  and #3);
	\draw[color = #6, rotate around={#5:(#1,#2)}, #7] (\xstart, \ystart) arc (190:210:#4  and #3) arc (150:30:#4  and #3) arc (330:350:#4  and #3);
}
\newcommand\DrawFilledGenus[8]{% (x, y), size, size*1.5, rotation, color, thickness
  \pgfmathsetmacro{\xstart}{#1 - (0.985*#4)}
  \pgfmathsetmacro{\ystart}{#2 + (0.2*#3)}
  %\draw (#1, #2) node[] {axis};
	\draw[color = #6, rotate around={#5:(#1,#2)}, #7] (\xstart, \ystart) arc (190:350:#4  and #3);
	\draw[color = #6, rotate around={#5:(#1,#2)}, #7] (\xstart, \ystart) arc (190:210:#4  and #3) arc (150:30:#4  and #3) arc (330:350:#4  and #3);
	\draw[color = #6, rotate around={#5:(#1,#2)}, #7, fill = #8] (\xstart, \ystart) arc (190:210:#4  and #3) arc (150:30:#4  and #3) arc (-30:-150:#4  and #3);
}
\newcommand\DrawDonut[7]{% (x, y), size, size*1.5, rotation
  \pgfmathsetmacro{\fctr}{.08}
  \pgfmathsetmacro{\newwidth}{0.5*#4}
  \pgfmathsetmacro{\newheight}{0.5*#3}
  \draw[color = #6, rotate around={#5:(#1,#2)}, #7] (#1, #2) ellipse (#4  and #3);
  \DrawGenus{#1}{#2}{\newheight}{\newwidth}{#5}{#6}{#7}
}
\newcommand\DrawFilledDonutops[8]{% (x, y), size, size*1.5, rotation
  \pgfmathsetmacro{\fctr}{.08}
  \pgfmathsetmacro{\newwidth}{0.5*#4}
  \pgfmathsetmacro{\newheight}{0.5*#3}
  \draw[color = #6, rotate around={#5:(#1,#2)}, #7, fill = #6, opacity = .6] (#1, #2) ellipse (#4  and #3);
  \DrawFilledGenus{#1}{#2}{\newheight}{\newwidth}{#5}{#6}{#7}{#8}
}
\tikzstyle{mytheorembox} = [draw=vdgreen, fill=blue!20, very thick, rectangle, rounded corners, inner sep=10pt, inner ysep=10pt]
\tikzstyle{mytheoremfancytitle} =[fill=vdgreen, text=white]
\definecolor{vdblue}{rgb}{0,0,.3}
\definecolor{dblue}{rgb}{0,0,.7}
\definecolor{lblue}{rgb}{.3,.3,1}
\definecolor{vlblue}{rgb}{.7,.7,1}
\definecolor{vvlblue}{rgb}{.9,.9,1}
\definecolor{vdred}{rgb}{.3,0,0}
\definecolor{dred}{rgb}{.7,0,0}
\definecolor{lred}{rgb}{1,.3,.3}
\definecolor{vlred}{rgb}{1,.7,.7}
\definecolor{vdgreen}{rgb}{0,.2,0}
\definecolor{dgreen}{rgb}{0,.4,0}
\definecolor{lgreen}{rgb}{.3,1,.3}
\definecolor{vlgreen}{rgb}{.7,1,.7}
\definecolor{lyellow}{rgb}{1,1,.3}
\definecolor{gray1}{rgb}{0.22,0.22,0.22}
\definecolor{gray2}{rgb}{0.28,0.28,0.28}
\definecolor{gray3}{rgb}{0.36,0.36,0.36}
\definecolor{gray4}{rgb}{0.44,0.44,0.44}
\definecolor{gray5}{rgb}{0.52,0.52,0.52}
\definecolor{gray6}{rgb}{0.6,0.6,0.6}
\definecolor{gray7}{rgb}{0.68,0.68,0.68}
\definecolor{gray8}{rgb}{0.76,0.76,0.76}
\definecolor{color1}{rgb}{1,0,0}
\definecolor{color2}{rgb}{0.98,0,0.816}
\definecolor{color3}{rgb}{0.717,0,1}
\definecolor{color4}{rgb}{0,0,1}
\definecolor{color5}{rgb}{0,1,1}
\definecolor{color6}{rgb}{0,1,0}
\definecolor{color8}{rgb}{1,1,0}
\definecolor{color7}{rgb}{1,0.651,0}
\title[Integrable systems on singular symplectic manifolds:
From local to global]{Integrable systems on singular symplectic manifolds:\\
From local to global}
\author{Robert Cardona}
\address{{Laboratory of Geometry and Dynamical Systems, Department of Mathematics}, Universitat Polit\`{e}cnica de Catalunya and BGSMath, Barcelona, Spain }
\email{robert.cardona@upc.edu}
\author{Eva Miranda}
\thanks{{ Robert Cardona is supported by an FPI grant under the Maria de Maeztu-BGSMath excellence programme. Eva Miranda  is supported by the Catalan Institution for Research and Advanced Studies via an ICREA Academia Prize 2016. Both authors are supported by the grants reference number MTM2015-69135-P (MINECO/FEDER) and reference number 2017SGR932 (AGAUR) and the project PID2019-103849GB-I00 / AEI / 10.13039/501100011033. }}
\address{{Laboratory of Geometry and Dynamical Systems Department of Mathematics $\&$ Institut de Matemàtiques de la UPC-BarcelonaTech (IMTech) )}, Universitat Polit\`{e}cnica de Catalunya $\&$ Centre de Recerca Matemàtica, Barcelona, Spain \\ and
\\ IMCCE, CNRS-UMR8028, Observatoire de Paris, PSL University, Sorbonne
Universit\'{e}, 77 Avenue Denfert-Rochereau,
75014 Paris, France}
\email{eva.miranda@upc.edu}
\begin{document}

\begin{abstract} In this article we consider integrable systems on  manifolds endowed with singular symplectic structures of order one. These structures are symplectic away from an hypersurface  where the symplectic volume  goes either to infinity or to zero in a transversal way (singularity of order one) resulting either in  a $b$-symplectic form or  a folded symplectic form. The hypersurface where the form degenerates is called critical set. We give a new impulse to the investigation of the existence of action-angle coordinates for these structures initiated in \cite{KM} and \cite{KMS} by proving an action-angle theorem for folded symplectic integrable systems.  Contrary to expectations, the action-angle coordinate theorem for folded symplectic manifolds cannot be presented as a cotangent lift as done for symplectic and $b$-symplectic forms in \cite{KM}.  Global constructions of integrable systems are provided and  obstructions for global existence of action-angle coordinates  are investigated in both scenarios. The new topological obstructions found emanate from the topology of the critical set $Z$ of the singular symplectic manifold. The existence of these obstructions in turn implies the existence of singularities for the integrable system on $Z$.

   %we prove an action-angle theorem which identifies these systems canonically with a cotangent model thus extending the standard cotangent model for integrable systems to this singular hypersurface. These systems turn out to be \emph{dual} to $b$-integrable systems semilocally though this duality does not extend globally.
\end{abstract}
\maketitle

\section{Introduction}

  In this article we investigate the integrability of Hamiltonian systems on manifolds endowed with a smooth $2$-form which is symplectic away from an hypersurface $Z$ (called the \emph{critical set}) and  which degenerates in a controlled way (of order one) along it. Either this form lowers its rank at $Z$ and it induces a form on $Z$ with maximal rank or its associated symplectic volume blows-up with a singularity of order one. The manifolds endowed with the first type of singular structure are  called \emph{folded symplectic manifolds} and the ones endowed with the second one are called $b$-symplectic forms. Folded symplectic manifolds  can be thought as symplectic manifolds \emph{with a fold, $Z$} that "mirrors"  the symplectic structure on both sides. The study of \emph{folded symplectic manifolds} complements that of their \emph{"duals"} to $b$-symplectic manifolds which have been largely investigated since \cite{GMP1} and \cite{GL} and are better described as Poisson manifolds whose Poisson bracket looses rank along an hypersurface keeping some transversality properties. This article is also an invitation to consider more degenerate cases (higher order singularities) which will be studied elsewhere and the models provided here can be considered as a \emph{toy model} for more complicated singularities.
{Integrable systems on singular manifolds show up naturally, for instance, in the study of the Toda systems when the particles in interaction collide or are far-away. Singular symplectic manifolds naturally model symplectic manifolds with boundary and, as such, the notion of integrable system is naturally extended to manifolds with boundary.}

  The research of integrability of Hamiltonian systems on these manifolds is of interest both from a Poisson and symplectic point of view.
   The existence of action-angle coordinates on symplectic manifolds has been of major importance as, other than integrating the system itself, it provides semilocal normal forms for integrable Hamiltonian systems which allow, for instance, a deep understanding perturbation theory of these systems (KAM theory). The existence of action-angle coordinates of integrable systems is also useful for quantization as already observed by Einstein when reformulating the Bohr-Sommerfeld quantization conditions \cite{einstein}.

Integrable systems on these singular symplectic manifolds define natural Lagrangian foliations on them and thus naturally yield real polarizations on these manifolds. In particular they are of interest to study geometric quantization of symplectic manifolds with boundary as one of the sources of examples for these singular structures. On symplectic manifolds with boundary deformation quantization is already well-understood \cite{NestandTsygan} and formal geometric quantization has been object of recent study in \cite{weitsman} for non-compact manifolds and in \cite{bquantization1, bmquantization} and \cite{bquantization2} for $b^m$-symplectic manifolds. More specifically, the existence of action-angle coordinates for these structures provides a \emph{primitive} first model for geometric quantization by counting the  integer fibers of the integrable system. As proved in \cite{hamilton, guilleminsternberg, hamiltonmiranda, mps}  this model has been tested to be successful in geometric quantization of toric symplectic manifolds  and refines  the idea of Bohr-Sommerfeld quantization. Understanding action-angle coordinates for integrable systems on singular symplectic manifolds can be a first good step in the study of geometric quantization of singular symplectic manifolds. {Action-angle manifolds on singular symplectic manifolds also provide  natural cotangent-type models that can be useful in understanding the notion of quantum integrable systems (\cite{quantum}, \cite{BBR}) in the singular set-up.}

  The study of \emph{folded symplectic manifolds} comprises the case of origami manifolds \cite{anarita} where additional conditions are imposed on the critical set and a natural global toric action exists. Origami manifolds inherit their denomination from origami paper templates where a superposition of Delzant polytopes \cite{Delzant} gives rise to a toric action on a class of folded symplectic manifolds. Symplectic origami provides an example of integrable system on folded symplectic manifolds but there are other examples motivated by physical systems such as the folded spherical pendulum or the Toda system where the interacting particles are far-away.

In this article we show the existence of $b$-integrable systems on  $b$-symplectic manifold of dimension $4$ having as critical set a Seifert manifold, and via the desingularization technique we obtain folded integrable systems on the associated desingularized folded symplectic manifold.
We prove the existence of action-angle coordinates à la Liouville-Mineur-Arnold exploring the Hamiltonian actions by tori on folded symplectic manifolds. The action variable are not exactly coordinates, since these variables can degenerate in a certain way.  We show that this action-angle theorem cannot always be interpreted in terms of a cotangent model as in the symplectic and $b$-symplectic case.

  We end up this article investigating the obstruction theory of  global existence of action-angle coordinates exhibiting a new topological obstruction for the singular symplectic manifolds that lives on the critical set of the singular symplectic form. This yields examples of integrable systems on $b$-symplectic manifolds and folded symplectic manifolds with critical set non diffeomorphic to a product of a symplectic leaf with a circle. For those systems the toric action does not even extend to a neighborhood of the critical set.
We end up this article observing that the existence of finite isotropy for the transverse $S^1$-action given by the modular vector field obstructs the uniformization of periods of the associated torus action  on the $b$-symplectic manifold and automatically yields the existence of singularities of the integrable system on the critical locus of the $b$-symplectic structure.

 \textbf{Organization of this article:} In Section 2 we introduce the basic tools in $b$-symplectic and folded symplectic geometry. In Section 3 we investigate Hamiltonian dynamics on folded symplectic manifolds and introduce folded integrable systems. In Section 4 we provide a list of motivating examples for integrable systems on folded symplectic manifolds. In Section 5 we prove an action-angle theorem (Theorem \ref{thm:aa}) for folded symplectic manifolds and discuss folded cotangent models. We end up this section discussing why this theorem does not always yield equivalence with a cotangent model. Section 6 contains constructions of integrable systems on  4-dimensional $b$-symplectic manifolds having as critical set a Seifert manifold (Theorem \ref{thm:bexist}) and on any folded symplectic manifold which desingularizes it. Section 7 investigates the existence of global action-angle coordinates and highlights the non-triviality of the mapping torus as topological obstruction to global existence of action-angle coordinates (Theorems \ref{thm:globalaa1} and \ref{thm:globalaa2}).

\section{Preliminaries}

In this article we consider forms $\omega$ on even dimensional manifolds $M^{2n}$ which are symplectic away from a hypersurface $Z$ and such that $\omega^n$ either cuts the zero section of the bundle $\Lambda^n(T^*M)$ transversally or goes to infinity in a controlled way along $Z$. So, in particular  $\omega^n$  defines a volume form away from $Z$.

For the class of forms for which  $\omega^n$ either cuts the zero section of the bundle $\Lambda^n(T^*M)$ transversally we require an extra condition to guarantee maximal rank (see below). These forms are called \emph{folded symplectic forms} as they can be visualized as symplectic manifolds which are folded along the folding hypersurface.
\subsection{Basics on folded symplectic manifolds}
{ We recall here some basic facts on folded symplectic manifolds.}
\begin{defi}
Let $M$ be a $2n$-dimensional manifold. We say that $\omega \in \Omega^2(M)$ is folded-symplectic if
\begin{enumerate}
    \item $d\omega=0$,
    \item $\omega^n \pitchfork \mathcal{O}$, where $\mathcal{O}$ is the zero section of $\bigwedge^{2n}(T^*M)$, hence $Z=\{ p\in M, \omega^n(p)=0\}$ is a codimension $1$ submanifold,
    \item $i: Z \rightarrow M$ is the inclusion map, ${i}^*\omega$ has maximal rank $2n-2$.
\end{enumerate}
We say that $(M,\omega)$ is a \textbf{folded-symplectic manifold} and we call $Z \subset M$ the folding hypersurface.
\end{defi}

The following theorem is an analog of Darboux theorem for folded symplectic
forms~\cite{M}:

\begin{theorem}[Martinet]\label{thm:martinet}
  For any point $p$ on the folding hypersurface $Z$ of a folded symplectic manifold $(M^{2n}, \omega)$ there is a local system of coordinates $(x_1, y_1, \dots, x_n, y_n)$ centered at $p$  such that $Z$ is locally given by $x_1=0$ and   $$\omega= x_1 dx_1 \wedge dy_1 + dx_2 \wedge dy_2 + \ldots + dx_n \wedge dy_n. $$

\end{theorem}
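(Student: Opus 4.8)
The plan is to combine a Darboux-type normal form for the constant-rank restriction $i^*\omega$ along $Z$ with a Moser deformation argument; the twist compared with the classical Darboux theorem is that $\omega$ degenerates on $Z$, so the Moser vector field has to be shown to stay smooth there. \textbf{Step 1 (normalizing $i^*\omega$ on $Z$).} By hypothesis $i^*\omega$ is a closed $2$-form of constant rank $2n-2$ on the $(2n-1)$-manifold $Z$, so its kernel is a line field $\ker(i^*\omega)$, whose local leaf space near $p$ is a $(2n-2)$-dimensional symplectic manifold. The Darboux theorem for presymplectic (constant-rank closed) forms then gives coordinates $(y_1,x_2,y_2,\dots,x_n,y_n)$ on $Z$ centered at $p$ with $i^*\omega=\sum_{j=2}^n dx_j\wedge dy_j$ and $\ker(i^*\omega)=\langle\partial/\partial y_1\rangle$. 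Choose any local function $x_1$ on $M$ with $Z=\{x_1=0\}$ and $dx_1\neq 0$ on $Z$, extend $y_1,x_2,\dots,y_n$ off $Z$ arbitrarily, and obtain coordinates $(x_1,y_1,x_2,\dots,y_n)$ centered at $p$ with $Z=\{x_1=0\}$.

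\textbf{Step 2 (matching the $1$-jet of $\omega$ with the model $\omega_0:=\sum_{j\geq 2}dx_j\wedge dy_j+x_1\,dx_1\wedge dy_1$).} Taylor expand $\omega=\omega^{(0)}+x_1\,\omega^{(1)}+O(x_1^2)$ in $x_1$, with $\omega^{(k)}$ having coefficients depending only on $y_1,x_2,\dots,y_n$. From $i^*\omega=\sum_{j\geq 2}dx_j\wedge dy_j$ one gets $\omega^{(0)}=\sum_{j\geq 2}dx_j\wedge dy_j+dx_1\wedge\mu$; since $\omega^n|_{x_1=0}$ is, up to a nonzero constant, the $dy_1$-coefficient of $\mu$ times the standard top form, the inclusion $Z\subseteq\{\omega^n=0\}$ forces that coefficient to vanish, and then a substitution $x_j\mapsto x_j+x_1 g_j,\ y_j\mapsto y_j+x_1 h_j$ (the identity along $Z$, so Step 1 is undisturbed) removes $\mu$ entirely, leaving $\omega^{(0)}=\sum_{j\geq 2}dx_j\wedge dy_j$. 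Now $d\omega=0$ read at order $x_1^0$ gives $dx_1\wedge\omega^{(1)}=0$, hence $\omega^{(1)}=dx_1\wedge\nu$ for a $1$-form $\nu$ in $dy_1,dx_2,\dots,dy_n$; computing $\omega^n$ to first order in $x_1$ identifies the transversal vanishing of $\omega^n$ along $Z$ — the \emph{order one} hypothesis — with the non-vanishing at $p$ of the $dy_1$-coefficient $c$ of $\nu$. A further substitution $x_j\mapsto x_j+\tfrac12 x_1^2 s_j,\ y_j\mapsto y_j+\tfrac12 x_1^2 r_j$ (the identity to first order along $Z$) kills the $dx_j,dy_j$-parts of $\nu$, and finally replacing $x_1$ by $x_1\sqrt{c}$ (after shrinking so that $c>0$, flipping the sign of $x_1$ if necessary) puts $\omega$ into the form $\omega=\omega_0+O(x_1^2)$. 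In particular $\omega$ and $\omega_0$ now have the same $1$-jet along $Z$, and $Z=\{x_1=0\}$ throughout.

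\textbf{Step 3 (Moser deformation).} Set $\omega_t:=\omega_0+t(\omega-\omega_0)$ for $t\in[0,1]$. On a sufficiently small neighborhood of $p$ every $\omega_t$ is folded symplectic with folding hypersurface $Z$: it is closed, $i^*\omega_t=\sum_{j\geq 2}dx_j\wedge dy_j$ has rank $2n-2$, and $\omega_t^n=n!\,x_1\cdot(\text{top form})+O(x_1^2)$ vanishes transversally along $\{x_1=0\}$ because $\omega-\omega_0=O(x_1^2)$. Since $\omega-\omega_0$ is closed and vanishes to second order along $Z$, the relative Poincaré lemma (in codimension one this is essentially integration in $x_1$) produces a primitive $\alpha$ with $d\alpha=\omega-\omega_0$ and $\alpha=O(x_1^2)$. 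Near $Z$ the bundle map $\omega_t^\flat\colon TM\to T^*M$ has block form: a standard symplectic block on the $(x_2,\dots,y_n)$-directions, and $x_1$ times an invertible $2\times 2$ matrix on the $(x_1,y_1)$-directions, up to a correction of order $x_1^2$; hence $(\omega_t^\flat)^{-1}$ has at worst a simple pole along $\{x_1=0\}$. Therefore $X_t:=-(\omega_t^\flat)^{-1}\alpha$ extends smoothly across $Z$ and vanishes on $Z$, the order-$2$ vanishing of $\alpha$ absorbing the simple pole. Integrating the time-dependent vector field $X_t$ yields an isotopy $\phi_t$ defined near $p$, fixing $Z$, with $\phi_0=\mathrm{id}$, and the standard computation $\frac{d}{dt}\phi_t^*\omega_t=\phi_t^*\!\left(d\,\iota_{X_t}\omega_t+(\omega-\omega_0)\right)=\phi_t^*(-d\alpha+d\alpha)=0$ gives $\phi_1^*\omega=\omega_0$. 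Composing the coordinates $(x_1,y_1,\dots,x_n,y_n)$ with $\phi_1$ produces coordinates centered at $p$, still with $Z=\{x_1=0\}$, in which $\omega$ has the Martinet normal form.

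\textbf{Main obstacle.} The only genuinely delicate point is Step 3: the Moser trick does not apply verbatim, since $\omega_t$ is degenerate along $Z$ and $(\omega_t^\flat)^{-1}$ blows up there. The remedy is precisely Step 2 — by spending a few explicit coordinate changes to force $\omega$ and the model to agree to first order along $Z$, which is exactly where both $d\omega=0$ and the \emph{order one} transversality hypothesis enter, one makes the primitive $\alpha$ vanish to second order, compensating the simple pole of $(\omega_t^\flat)^{-1}$ so that the Moser vector field is smooth and tangent to $Z$. An alternative route is to first realize a neighborhood of $Z$ as the pullback of a symplectic manifold under a fold map and then combine the normal form of fold maps with the ordinary Darboux theorem, which leads to the same computations.
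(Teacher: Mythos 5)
The paper does not prove this statement: it is quoted as Martinet's theorem with a citation to \cite{M}, so there is no internal proof to compare against. Your argument is a correct, self-contained proof along the standard lines (jet normalization along $Z$ followed by a relative Moser trick), and it is essentially the classical argument of Martinet and of Cannas da Silva--Guillemin--Woodward. The structure is sound: the constant-rank Darboux theorem on $Z$, the order-by-order coordinate changes that force $\omega$ and the model to agree to first order along $Z$ (correctly using closedness to get $\omega^{(1)}=dx_1\wedge\nu$ and the transversality of $\omega^n\pitchfork\mathcal{O}$ to get $c\neq 0$), and the key analytic point that a primitive of the closed form $\omega-\omega_0=O(x_1^2)$ can be chosen $O(x_1^2)$, which cancels the simple pole of $(\omega_t^\flat)^{-1}$ and makes the Moser vector field smooth and vanishing on $Z$.

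One small slip: to handle $c<0$ you propose ``flipping the sign of $x_1$,'' but $(-x_1)\,d(-x_1)=x_1\,dx_1$, so this changes nothing. The correct fix is to replace $y_1$ by $-y_1$ (which leaves $\sum_{j\geq 2}dx_j\wedge dy_j$ and the kernel normalization untouched) and then rescale $x_1$ by $\sqrt{|c|}$ via $\tilde{x}_1=x_1\sqrt{|c|}$, using $\tilde{x}_1\,d\tilde{x}_1=\tfrac12 d(x_1^2|c|)=|c|x_1\,dx_1+O(x_1^2)$ as you do. This is cosmetic and does not affect the validity of the proof.
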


   Let $(M,\omega)$ be a $2n$-dimensional folded symplectic manifold.
Let $i : Z \hookrightarrow M$ be the inclusion of the
folding hypersurface $Z$. The induced restriction $i ^* \omega$ has a one-dimensional kernel at
each point. We denote by $V$ the bundle $\ker i^*\omega$ defined at $Z$, and $L= V\cap TZ$ the null line bundle.

The following theorem ~\cite[Theorem 1]{CGW} is  a Moser type theorem for folded symplectic manifolds which extends the local normal form above to a neighborhood of $Z$.
For the null line bundle we consider $\alpha$ a one-form such that $\alpha(v)=1$ for a non-vanishing section $v$ of $L$.

\begin{theorem}\label{thm:moser}
Suppose that Z is compact. Then there exists a neighborhood $U$
of $Z$ and an orientation preserving diffeomorphism,
$$ \varphi:Z\times(-\varepsilon,\varepsilon)\to U$$
where $\varepsilon>0$ and $U$
such that $\varphi(x,0)=x$ for all $x\in Z$ and
$$\varphi^* \omega = p^* i^* \omega
   + d \left( t^2 p^* \alpha \right) \ ,$$
with $p: Z \times (-\varepsilon,\varepsilon) \to Z$
the projection onto the first factor, the map
$i : Z \hookrightarrow M$ the inclusion and
$t$ the real coordinate on the interval $(-\varepsilon,\varepsilon)$.
\end{theorem}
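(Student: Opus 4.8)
The plan is a Moser deformation argument adapted to the folded setting. The point that makes it work is that, after a suitable choice of collar, the given form and the proposed model agree to second order along $Z$, so the straight-line homotopy between them stays folded symplectic near $Z$ and the Moser equation can still be solved despite the degeneracy on $Z$.

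First I would use the collar (tubular) neighborhood theorem to fix a diffeomorphism from $Z\times(-\varepsilon,\varepsilon)$ onto a neighborhood of $Z$ in $M$ restricting to the identity on $Z\times\{0\}\cong Z$; write $t$ for the coordinate on the interval and $p$ for the projection onto $Z$. The collar can be chosen so that $\partial_t|_Z$ is a non-vanishing section of $V$ transverse to $TZ$ — note $V/L$ is canonically the normal bundle of $Z$, which is trivial because $\omega^n$ cuts the zero section of $\bigwedge^{2n}(T^*M)$ transversally. Denote by $\omega_0$ the pullback of $\omega$ to the collar and set
\[
\omega_1 \;=\; p^{*}i^{*}\omega \;+\; d\bigl(t^{2}p^{*}\alpha\bigr)\;=\;p^{*}i^{*}\omega + 2t\,dt\wedge p^{*}\alpha + t^{2}p^{*}d\alpha .
\]
Expanding $\omega_1^{\,n}$, its term of lowest order in $t$ is $2n\,t\,(p^{*}i^{*}\omega)^{n-1}\wedge dt\wedge p^{*}\alpha$, which is $t$ times a volume form on the collar since $(i^{*}\omega)^{n-1}\wedge\alpha$ is a volume form on $Z$ (here one uses that $\alpha$ pairs non-trivially with $L=\ker i^{*}\omega$). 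Hence $\omega_1$ is folded symplectic with folding hypersurface exactly $Z$, and $i^{*}\omega_1=i^{*}\omega=i^{*}\omega_0$. Because $\partial_t|_Z\in V=\ker(\omega_0|_{TM|_Z})$ and the two pullbacks to $Z$ coincide, in fact $\omega_0$ and $\omega_1$ agree as $2$-forms on $TM|_Z$ along $Z$. Comparing with Martinet's normal form (Theorem \ref{thm:martinet}), where the local choices $t=x_1$, $\alpha=\tfrac12\,dy_1$ make $\omega_0$ and $\omega_1$ literally identical near each point of $Z$, one sees that the collar can be adjusted further so that $\eta:=\omega_1-\omega_0$ vanishes to second order along $Z$; the obstruction to making $\eta$ vanish identically is global, which is why a Moser argument is needed at all.

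Granting this, $\eta$ is closed with $i^{*}\eta=0$, so a relative version of the Poincaré lemma on the collar yields a primitive $\mu$ with $d\mu=\eta$ and $\mu$ vanishing to second order along $Z$. Since $\eta$ vanishes to second order, on a possibly smaller neighborhood of $Z$ the path $\omega_s=\omega_0+s\,\eta$ consists, for every $s\in[0,1]$, of folded symplectic forms with folding hypersurface $Z$ — nondegenerate off $Z$, of rank $2n-2$ along $Z$, with $\omega_s^{\,n}$ transverse to the zero section. I would then seek a time-dependent vector field $X_s$ solving the Moser equation $\iota_{X_s}\omega_s=-\mu$, integrate it to an isotopy $\psi_s$ with $\psi_0=\mathrm{id}$, and use Cartan's formula together with $d\omega_s=0$ to get $\tfrac{d}{ds}\psi_s^{*}\omega_s=\psi_s^{*}\bigl(d\iota_{X_s}\omega_s+\eta\bigr)=0$, whence $\psi_1^{*}\omega_1=\omega_0$. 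Taking $\varphi$ to be the chosen collar map composed with $\psi_1^{-1}$ then gives $\varphi^{*}\omega=\omega_1=p^{*}i^{*}\omega+d(t^{2}p^{*}\alpha)$, which is the assertion.

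The main obstacle is precisely solving the Moser equation: since $\omega_s$ is degenerate along $Z$, $X_s$ cannot be produced by inverting $\omega_s$. Off $Z$ the form $\omega_s$ is symplectic and $X_s$ is uniquely determined; the real work is to show that $X_s$ extends smoothly across $Z$ and vanishes there. This is a matter of tracking orders of vanishing in $t$: writing $\omega_s=t\,dt\wedge\tilde\beta_s+\gamma_s$ in the collar, with $\tilde\beta_s$ pairing non-trivially with $L$ and $\gamma_s|_Z=p^{*}i^{*}\omega$ of rank $2n-2$ and kernel $L$, the contraction operator $X\mapsto\iota_X\omega_s$ has an inverse with a first-order pole in $t$ along the fold direction, whereas $\mu$ vanishes to second order in $t$; the two cancel, so $X_s$ is smooth and $X_s|_Z=0$. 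Consequently each $\psi_s$ fixes $Z$ pointwise. Finally, compactness of $Z$ lets one shrink $\varepsilon$ so that the flow $\psi_s$ is defined on all of $Z\times(-\varepsilon,\varepsilon)$ for $s\in[0,1]$, and $\psi_1$, being isotopic to the identity, is an orientation-preserving diffeomorphism onto a neighborhood of $Z$; composing it with the collar identification produces the map $\varphi$ of the statement.
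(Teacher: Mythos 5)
The paper offers no proof of this statement; it is quoted verbatim as Theorem~1 of \cite{CGW}, and your proposal reconstructs essentially the strategy of that original proof: a collar adapted to the kernel bundle $V$, a jet comparison of $\omega$ with the model $p^*i^*\omega+d(t^2p^*\alpha)$ along $Z$, a relative Poincar\'e lemma with control on the order of vanishing of the primitive, and a Moser deformation in which the degeneracy of $\omega_s$ along $Z$ is compensated by that vanishing. The computation of $\omega_1^n$, the first-order agreement of $\omega_0$ and $\omega_1$ along $Z$, the order count showing that $X_s=O(t)$ and hence that the isotopy fixes $Z$, and the final composition $\varphi=c\circ\psi_1^{-1}$ are all correct.

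The one step you assert rather than prove is the crux: \emph{``the collar can be adjusted further so that $\eta=\omega_1-\omega_0$ vanishes to second order along $Z$.''} Everything downstream (the path $\omega_s$ remaining folded, the smoothness of $X_s$) hinges on it, and the appeal to Martinet's normal form only shows that the matching can be achieved pointwise in coordinates adapted simultaneously to $\alpha$ and to the collar; it does not produce one global collar doing the job. Concretely, since $\eta$ is closed and vanishes on $Z$ as a tensor, one has $\eta=t\,dt\wedge\rho+O(t^2)$ with $\rho=2\alpha-\iota_w(d\iota_w\omega)|_{TZ}$, where $w$ is the collar field, so second-order vanishing is the single equation $\iota_w(d\iota_w\omega)|_{TZ}=2\alpha$ along $Z$. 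In Martinet coordinates with $v=f\,\partial_{y_1}$ and $w=g\,\partial_{x_1}+\cdots$ this forces $g^2f=2$, which is solvable only when $f>0$, i.e.\ only when $v$ is compatible with the orientation that the normal form of $\omega$ induces on $L$ --- the normalization $\alpha(v)=1$ alone does not guarantee this, so an orientation choice must be made explicit; once it is, the admissible $1$-jets of $w$ form a nonempty affine (hence contractible) subbundle and a global solution exists by a partition of unity. Alternatively, you can bypass second-order matching entirely: first-order agreement, which you do prove, already yields via the homotopy formula for the retraction onto $Z$ a primitive $\mu$ vanishing to second order, and the only remaining input is that the leading coefficients of $\omega_0^n/t$ and $\omega_1^n/t$ along $Z$ have the same sign, so that each $\omega_s^n$ still vanishes transversally --- again a statement about the coorientation of the collar and the orientation of $v$. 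Either way, this orientation bookkeeping is the substantive content currently missing from your argument.
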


A special class of folded-symplectic manifolds are origami symplectic manifolds
for which the null foliation $L$ defines a fibration.

\begin{defi} An \textbf{origami manifold} is a folded symplectic manifold $(M, \omega)$ whose null foliation
is fibrating with oriented circle fibers, $\pi$, over a compact \emph{base}, $B$.

{
\begin{center}
\begin{tikzcd}
S^1 \arrow[hookrightarrow]{r}{} & \arrow{d}{\pi}  Z  \\%
{} & B
\end{tikzcd}
\end{center}
}

The form $\omega$ is called an \textbf{origami form}
and the null foliation
is called the \textbf{null fibration}.
\end{defi}
\begin{rem}
On an origami manifold,
the base $B$ is naturally symplectic with
symplectic form $\omega_B$ on $B$ satisfying
$i^* \omega = \pi^* \omega_B.$
\end{rem}

\begin{ex}
\label{ex:spheres}
Consider the unit sphere $S^{2n}\subset\mathbb R^{2n+1}$ given by equation $\sum_{i^1}^{n} (x_i^2+y_i^2)+ z^2=1$
with global coordinates $x_1,y_1,\ldots,x_n,y_n,z$ on $\mathbb R^{2n+1}$.
Let $\omega_0$ be the restriction to $S^{2n}$ of the form
$dx_1 \wedge dy_1 + \ldots + dx_n \wedge dy_n$ which in polar coordinates reads
$r_1 dr_1 \wedge d\theta_1 + \ldots + r_n dr_n \wedge d\theta_n$.
Then $\omega_0$ is a folded symplectic form.
The folding hypersurface is the sphere given by the
intersection with the plane $z=0$.
It is easy to check that the null foliation is the Hopf foliation (see for instance \cite{anarita}).
\end{ex}

\subsection{Basics  on $b$-symplectic manifolds}

In this section we give a crash course on {\bf $b$-symplectic/Poisson manifolds}.

The study of $b$-symplectic manifolds starts with a similar definition to that of folded symplectic manifolds but in the context of Poisson geometry.
Given a symplectic form $\omega$ we can naturally associate a Poisson bracket to any pair of smooth functions $f,g\in \mathcal{C}^\infty(M^{2n})$ from the symplectic structure as follows
$$\{f,g\}=\omega(X_f, X_g)$$

\noindent where the vector fields $X_f$ and $X_g$ stand for the Hamiltonian vector fields with respect to $\omega$.

From the equation above it is simple to check that $\{f,g\}=X_f(g)$ so the bracket defines a biderivation (Leibnitz rule), it is antisymmetric and because $X_{\{f,g\}}=[X_f, X_g]$, it also satisfies the Jacobi identity (i.e., $\{ \{f,g\}, h \}+\{ \{g, h\}, f \}+\{ \{h,f\}, g \}=0$ for any triple of smooth functions $f,g$ and $h$.

A general Poisson structure is defined as a general  antisymmetric bracket on any manifold (not necessarily even dimensional) $\{\cdot,\cdot\}: \mathcal{C}^\infty(M)\times \mathcal{C}^\infty(M)\longrightarrow \mathcal{C}^\infty(M)$ satisfying Leibnitz rules and Jacobi identity.

Because a Poisson bracket defines a biderivation, we can write it as a bivector field $\Pi\in \Gamma(\Lambda^2(TM))$. The correspondence between Poisson brackets and Poisson bivector fields is clarified by the equation $$\Pi(df,dg)=\{f,g\}.$$

The Jacobi identify defines an additional constraint and not every bivector field defines a Poisson structure. Bivector fields which are Poisson satisfy the integrability equation
$[\Pi, \Pi]=0$ where the bracket is the Schouten bracket, the natural extension of the Lie bracket to bivector fields.

In total analogy with the symplectic case, given a function we may define the Hamiltonian vector field via the equation:
$X_f:=\Pi(df,\cdot)$.
Observe that, in particular the equation $\{f,g\}=X_f(g)$ also holds in the general Poisson context.

Let us now consider a Poisson bivector field on an even dimensional manifold which is symplectic away from an hypersurface. When these Poisson bivector fields fulfill transversality conditions along the hypersurface many techniques from the symplectic realm can be exported to study this class of Poisson structures. These are called $b$-Poisson manifolds and  have been studied and analyzed in detail starting in \cite{GMP1}.
\begin{defi}\label{def:bpoisson}
Let $(M^{2n},\Pi)$ be a Poisson manifold. If the map
$$p\in M\mapsto(\Pi(p))^n\in\bigwedge^{2n}(T(M^{2n}))$$
is transverse to the zero section, then $\Pi$ is called a \textbf{$b$-Poisson structure} on $M$. The pair $(M^{2n},\Pi)$ is called a \textbf{$b$-Poisson manifold}. The vanishing set of $\Pi^n$ is a hypersurface denoted by $Z$ and called the {\bf critical hypersurface} of $(M^{2n},\Pi)$.
\end{defi}

A list of examples can be found and analyzed in detail in \cite{arnaueva}. In the particular case of surfaces, these structures coincide with the stable Poisson structures classified by \cite{radko}.
The next example is the prototypical \emph{Radko sphere}.

\begin{ex}
 We endow the  $2$-sphere $\mathbb S^2$ with the coordinates $(h,\theta)$, where $h$ denotes the height function and $\theta$ is the  angle. The Poisson structure written as $\Pi=h\frac{\partial}{\partial h}\wedge\frac{\partial}{\partial \theta}$ vanishes transversally along the equator $Z=\{h=0\}$ and thus it defines a $b$-Poisson structure on the pair $(\mathbb S^2,Z)$.
\end{ex}

The product of  two $b$-Poisson manifolds is not a $b$-Poisson manifold but the product of a $b$-Poisson surface with a symplectic manifold is a $b$-Poisson manifold as described in the example below.

\begin{ex}\label{example2}
For higher dimensions we may consider the following product structures: let $(\mathbb S^2,Z)$ be the sphere in the example above and $(S^{2n},\pi_S)$ be a symplectic manifold, then $(\mathbb S^2\times S,\pi_{\mathbb S^2}+\pi_S)$ is a $b$-Poisson manifold of dimension  $2n+2$.  We may replace $(\mathbb S^2,Z)$ by any Radko compact surface $(R,\pi_R)$ (see for instance \cite{GMP1}).

\end{ex}

Other examples come from foliation theory and from the theory of cosymplectic manifolds:

\begin{ex}\label{example3}
Let $(N^{2n+1},\pi)$ be a Poisson manifold of constant corank $1$. Let us assume that there exists a vector field $X$ which is a  Poisson vector field and let $f:\mathbb S^1\to\mathbb R$ be a smooth function. The bivector field given by
$$\Pi=f(\theta)\frac{\partial}{\partial\theta}\wedge X+\pi$$
defines a $b$-Poisson structure on the product $S^1\times N$ whenever the function $f$ vanishes linearly and the vector field $X$ is transverse to the symplectic leaves of $N^{2n+1}$. In this case, the critical hypersurface is formed by the union of as many copies of $N$ as zeros of $f$.

\end{ex}

The  example above is generic in the sense that any $b$-Poisson structure can be described in this way in a neighborhood of a critical hypersurface $N$. The critical hypersurface $N$ has a natural cosymplectic structure associated to it. In particular, this example realizes a given cosymplectic manifold as a connected component of a cosymplectic manifold which is a critical set of a $b$-Poisson manifold.
This is the content of example 19 in \cite{GMP1}.

Around any point in $Z$, the $b$-Darboux theorem (see \cite{GMP1} and \cite{NestandTsygan}) guarantees that it is always possible to find local coordinates with respect to which the $b$-Poisson structure as stated below:

\begin{theorem}[$b$-Darboux] For any point $p\in Z$ on the critical hypersurface of a $b$-Poisson manifold we may find local coordinates centered at $p$ for which the $b$-Poisson structure $\Pi$
can be written as:
\[
\Pi= \sum_{i=1}^{n-1} \frac{\partial}{\partial x_i} \wedge \frac{\partial}{\partial y_i} + t \frac{\partial}{\partial t} \wedge \frac{\partial}{\partial z}.
\]

\end{theorem}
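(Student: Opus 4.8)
The plan is to reduce the statement to two ingredients that are already well understood — the classical Darboux theorem in the ``symplectic'' directions and the two-dimensional classification of $b$-Poisson (stable) structures — by first applying Weinstein's local splitting theorem at $p$; this is in essence the argument of \cite{GMP1}. The first point to establish is that $\Pi$ has rank exactly $2n-2$ at every point of $Z$. Since $\Pi^n(p)=0$ by the definition of $Z$, the rank is at most $2n-2$; if it were at most $2n-4$ at $p$, then in any Weinstein chart the top power $\Pi^n$ would be a multiple of a product of at least two bivectors vanishing at $p$, hence would vanish to second order at $p$, contradicting the hypothesis that the section $q\mapsto\Pi^n(q)$ of $\bigwedge^{2n}(TM)$ is transverse to the zero section (transversality forces the defining coefficient of $\Pi^n$ to have nonzero differential at $p$).

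Next I would invoke Weinstein's splitting theorem at $p$: there are coordinates $(x_1,y_1,\dots,x_{n-1},y_{n-1},u,v)$ centered at $p$ in which
\[
\Pi=\sum_{i=1}^{n-1}\frac{\partial}{\partial x_i}\wedge\frac{\partial}{\partial y_i}+g(u,v)\,\frac{\partial}{\partial u}\wedge\frac{\partial}{\partial v},\qquad g(0,0)=0 ,
\]
where the transverse factor is automatically of this single-function form because every bivector on a surface is Poisson (the Jacobi identity is vacuous in dimension two). A direct computation gives $\Pi^n=n!\,g\,\frac{\partial}{\partial x_1}\wedge\cdots\wedge\frac{\partial}{\partial y_{n-1}}\wedge\frac{\partial}{\partial u}\wedge\frac{\partial}{\partial v}$, so the transversality hypothesis on $\Pi$ translates exactly into $dg_{(0,0)}\neq 0$. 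In particular $Z=\{g=0\}$ is, near $p$, a smooth hypersurface, and the transverse slice $(\mathbb{R}^2,\,g\,\partial_u\wedge\partial_v)$ is a $b$-Poisson surface germ.

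It then remains to normalize this surface germ. Straighten $Z$ inside the slice by choosing coordinates $(t,z)$ near the origin of $\mathbb{R}^2$ with $Z=\{t=0\}$; then $g=t\,h(t,z)$ with $h(0,0)\neq 0$, by Hadamard's lemma together with $dg_{(0,0)}\neq 0$. Replacing $z$ by $\tilde z(t,z)=\int_0^z h(t,s)\,ds$, which is an admissible coordinate change since $\partial_z\tilde z(0,0)=h(0,0)\neq 0$, the transformation law for bivectors yields $g\,\partial_t\wedge\partial_z=t\,\partial_t\wedge\partial_{\tilde z}$ (this is the point where one may instead simply quote \cite{radko}). Keeping the $x_i,y_i$ untouched — the splitting is a product, so a coordinate change in the transverse slice leaves $\sum\partial_{x_i}\wedge\partial_{y_i}$ unchanged — and renaming $\tilde z$ as $z$ produces the asserted normal form, with $Z=\{t=0\}$ locally.

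The step I expect to require the most care is the interface between the single transversality hypothesis on $\Pi^n$ and the two local models: one must check that this one condition simultaneously pins down the rank at $p$ and makes the transverse factor a genuine $b$-Poisson surface to which the two-dimensional normal form applies; the rest is Weinstein splitting plus a one-variable integration. An alternative route, parallel to the Darboux theorem for $b$-symplectic \emph{forms}, would be to dualize $\Pi$ to a closed, nondegenerate $b$-form $\omega$ and run the classical Moser path argument within the $b$-calculus to reach $\sum dx_i\wedge dy_i+\frac{dt}{t}\wedge dz$, and then invert $\omega$.
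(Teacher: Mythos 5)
The paper does not actually prove this theorem: it is recalled from \cite{GMP1} (and \cite{NestandTsygan}), and the proof there is precisely the route you take — show the rank on $Z$ is exactly $2n-2$ using transversality of $\Pi^n$, apply Weinstein's splitting theorem, and normalize the resulting two-dimensional transverse germ as in Radko's classification. Your rank argument and the identification of the transversality hypothesis with $dg_{(0,0)}\neq 0$ via $\Pi^n=n!\,g\,\partial_{x_1}\wedge\cdots\wedge\partial_v$ are correct. The one step that fails as written is the final coordinate change: with $g=t\,h$ and $\tilde z(t,z)=\int_0^z h(t,s)\,ds$ you get $\{t,\tilde z\}=g\,\partial_z\tilde z=t\,h^2$, not $t$, so the bivector becomes $t\,h^2\,\partial_t\wedge\partial_{\tilde z}$ rather than $t\,\partial_t\wedge\partial_{\tilde z}$. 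You need the reciprocal, $\tilde z(t,z)=\int_0^z h(t,s)^{-1}\,ds$, which is an admissible change since $h(0,0)\neq 0$ and gives $\{t,\tilde z\}=g/h=t$ exactly. With that correction the argument is complete and agrees with the cited proof; your closing alternative (dualizing to a $b$-form and running a Moser path in the $b$-complex) is the other proof given in \cite{GMP1} for the form version, Theorem \ref{theorem:Darboux2}.
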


Thus $b$-Poisson manifolds and symplectic manifolds have many things in common. Indeed
it is possible to work with the language of forms by admitting $\frac{df}{f}$ where $f$ is the defining function of $Z$ as a \emph{legal} form of an extended complex.
This is the complex of $b$-forms originally introduced by Richard Melrose \cite{Melrose} to study the index theorem on manifolds with boundary.

In order to introduce this language properly we briefly recall the construction of $b$-forms: Given a pair $(M,Z)$ where $Z$ is an hypersurface a $b$-vector field is a vector field on $M$ tangent to $Z$. The space of $b$-vector fields can be naturally identified as sections of a vector bundle on $M$ called the $b$-tangent bundle $^bTM$. When we refer to $b$-forms we consider sections of the exterior algebra of the its dual, the $b$-cotangent bundle $^bT^*M := (^bTM)^*$.

Any $b$-form of degree $k$ can be written as $\omega= \frac{df}{f}\wedge \alpha+\beta$ where  $\alpha$ and $\beta$ are $k-1$ and $k$ smooth De Rham forms respectively and $f$ is a defining function for $Z$.

\begin{defi}({\bf $b$-functions})
 A $b$-function on a $b$-manifold $(M,Z)$ is a function which is smooth away from the critical set $Z$, and near $Z$ has the form
$$ c \log |t| + g, $$
where $c\in \R, g\in C^\infty,$ and $t$ is a local defining function. The sheaf of $b$-functions is denoted $^b \! C^\infty$.
\end{defi}

A closed $b$-form of degree $2$ which is nondegenerate as a section of the bundle $\Lambda^2 ({^b}T^*M)$ is called a {\textbf{ $b$-symplectic form}}. As it is proved in \cite{GMP1} there is a one-to-one correspondence between $b$-symplectic forms and $b$-Poisson forms. In particular we may re-state the Darboux normal form in the language of $b$-forms as done below

\begin{theorem}\label{theorem:Darboux2}\textbf{($b$-Darboux theorem)}
Let $\omega$ be a $b$-symplectic form on $(M,Z)$ and $p\in Z$. Then we can find a coordinate chart $(U,x_1,y_1,\ldots,x_n,y_n)$ centered at $p$ such that on $U$ the hypersurface $Z$ is locally defined by $y_1=0$ and
$$\omega=d x_1\wedge\frac{d y_1}{y_1}+\sum_{i=2}^n d x_i\wedge d y_i.$$
\end{theorem}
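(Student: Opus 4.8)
The plan is to deduce Theorem~\ref{theorem:Darboux2} from the $b$-Poisson Darboux theorem stated just above, via the one-to-one correspondence between $b$-symplectic forms and $b$-Poisson structures from \cite{GMP1}. Given the $b$-symplectic form $\omega$ near $p\in Z$, I would pass to its associated $b$-Poisson bivector $\Pi$, apply the $b$-Poisson Darboux theorem to obtain coordinates $(x_1,y_1,\dots,x_{n-1},y_{n-1},t,z)$ centered at $p$ with
$$\Pi=\sum_{i=1}^{n-1}\frac{\partial}{\partial x_i}\wedge\frac{\partial}{\partial y_i}+t\,\frac{\partial}{\partial t}\wedge\frac{\partial}{\partial z},$$
and then dualize back. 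In $\Lambda^2({}^bT^*M)$ the dual of this bivector, with respect to the pairing of the $b$-tangent and $b$-cotangent bundles in which $t\frac{\partial}{\partial t}$ pairs with $\frac{dt}{t}$, is $\sum_{i=1}^{n-1}dx_i\wedge dy_i+\frac{dt}{t}\wedge dz$. Relabelling $z\rightsquigarrow x_1$, $t\rightsquigarrow y_1$, shifting the remaining indices up by one, and flipping the sign of one coordinate to absorb $\frac{dt}{t}\wedge dz=-dz\wedge\frac{dt}{t}$, one lands on $\omega=dx_1\wedge\frac{dy_1}{y_1}+\sum_{i=2}^n dx_i\wedge dy_i$ with $Z=\{y_1=0\}$. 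The one thing to verify is that dualizing a $b$-Poisson bivector to a $b$-form is genuinely inverse to the assignment $\omega\mapsto\{\cdot,\cdot\}$; this is precisely the correspondence cited, so no new work is needed, which is why the statement can be presented as a mere reformulation.

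For a self-contained argument I would instead run a $b$-Moser scheme. Near $p$, fix a defining function $t$ for $Z$ and write $\omega=\frac{dt}{t}\wedge\alpha+\beta$ with $\alpha\in\Omega^1$, $\beta\in\Omega^2$ smooth. Then $d\omega=0$ forces $d(i^*\alpha)=0$ and $d(i^*\beta)=0$, while $\omega^n=n\,\frac{dt}{t}\wedge\alpha\wedge\beta^{n-1}+\beta^n$ together with $b$-nondegeneracy gives $i^*\alpha\wedge(i^*\beta)^{n-1}\ne0$; thus $(Z,i^*\alpha,i^*\beta)$ is a cosymplectic manifold. Applying the Darboux normal form for cosymplectic structures on $Z$ near $p$ (after a sign choice) yields coordinates $(x_1,x_2,y_2,\dots,x_n,y_n)$ on $Z$ with $i^*\alpha=-dx_1$ and $i^*\beta=\sum_{i=2}^n dx_i\wedge dy_i$. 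Extending these functions off $Z$ and putting $y_1=t$, the model $\omega_0=dx_1\wedge\frac{dy_1}{y_1}+\sum_{i=2}^n dx_i\wedge dy_i$ is a $b$-symplectic form agreeing with $\omega$ along $Z$. One then interpolates, $\omega_s=\omega+s(\omega_0-\omega)$, checks $b$-nondegeneracy for $s\in[0,1]$ (automatic along $Z$, hence on a neighborhood of $p$), writes $\omega_0-\omega=d\mu_s$ with $\mu_s$ a $b$-one-form vanishing along $Z$ by the $b$-relative Poincar\'e lemma, and integrates the time-dependent $b$-vector field given by $\iota_{X_s}\omega_s=-\mu_s$; its flow fixes $p$, preserves $Z$, and pulls $\omega$ back to $\omega_0$.

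The main obstacle is entirely in this second route, at the $b$-relative Poincar\'e step: one must check that the interpolating family stays $b$-nondegenerate on a neighborhood of $p$ and, crucially, that the $b$-primitive $\mu_s$ can be chosen vanishing along $Z$ (and to leading order at $p$) so that the generated isotopy preserves $Z$ and is defined on a full neighborhood of $p$ — this is where the structure of $b$-de Rham cohomology genuinely enters. By contrast the first route has essentially no obstacle: all the analytic content is already carried by the $b$-Poisson Darboux theorem and the form--bivector correspondence, so only the linear-algebraic inversion and the relabelling described above remain.
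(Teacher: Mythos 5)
Your first route is exactly what the paper does: Theorem \ref{theorem:Darboux2} is presented there as a mere restatement of the preceding $b$-Poisson Darboux theorem (itself quoted from \cite{GMP1} and \cite{NestandTsygan}) via the one-to-one correspondence between $b$-symplectic forms and $b$-Poisson structures, so only the dualization and relabelling you describe are needed, and your sign bookkeeping is consistent with the paper's conventions ($\iota_{X_f}\omega=-df$, $\{f,g\}=\omega(X_f,X_g)$). Your second, Moser-type route through the induced cosymplectic structure $(i^*\alpha,i^*\beta)$ on $Z$ is a correct self-contained alternative (essentially the argument in the cited reference), but it goes beyond anything the paper itself carries out.
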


 For any $b$-function on a $b$-symplectic manifold $(M, \omega)$ the \textbf{$b$-Hamiltonian} vector field is the  one  $X_{f_n}$ defined by $\iota_{X_{f_n}}\omega = -df_n$.

  A $\mathbb{T}^k$ action on a $b$-symplectic manifold $(M^{2n}, \omega)$ is called \textbf{$b$-Hamiltonian} if the fundamental vector fields are the $b$-Hamiltonian vector fields of functions which Poisson commute. Such an action is called \textbf{toric} if $k = n$.

The critical hypersurface $Z$ of a $b$-symplectic structure has an induced regular Poisson structure which can also be visualized as a cosymplectic manifold (see \cite{GMP1, GMP2}).

In \cite{GMP2} it was shown that if $Z$ is compact and connected, then the critical set $Z$  is the mapping torus of any of its symplectic leaves $L$ by the flow of the any choice of modular vector field $u$:
\[
 Z = (L \times [0,k])/_{(x,0)\sim (\phi(x),k)},
\]
where $k$ is a certain positive real number and $\phi$ is the time-$k$ flow of $u$. In particular, all the symplectic leaves inside $Z$ are symplectomorphic. { As in \cite{GMP1}, we refer to a fixed symplectomorphism inducing the mapping torus as the \textbf{monodromy} of $Z$.}

This yields  the following definition:

\begin{defi}[Modular period]\label{def:modperiod}
 Taking any modular vector field $u_{mod}^\Omega$, the {\bf modular period} of $Z$ is the number $k$ such that $Z$ is the mapping torus
$$ Z = (L \times [0,k])/_{(x,0)\sim (\phi(x),k)},$$
and the time-$t$ flow of  $u_{mod}^\Omega$ is translation by $t$ in the $[0, k]$ factor above.
\end{defi}

\subsubsection{The (twisted) $b$-cotangent lift}\label{ssec:bcotangentlift}
The cotangent lift can also be defined on the  $b$-cotangent bundle of a smooth manifold. In this case there are two different $1$-forms that provide the same geometrical structure on the $b$-cotangent bundle (a $b$-symplectic form). These are the canonical (Liouville) $b$-form  and the twisted $b$-form. Both forms  of degree $1$ have the same differential ( a smooth $b$-symplectic form) but are indeed non-smooth forms. The $b$-cotangent lift in each of the cases is defined in a different manner.
These were studied in detail in \cite{KM}. In this article we focus on the twisted $b$-cotangent lift as it gives the right model for the structure of a $b$-integrable system.
\begin{defi}
Let $T^* \mathbb T^n$ be endowed with the standard coordinates $(\theta, a)$, $\theta \in \mathbb T^n$, $a \in \mathbb R^n$ and consider again the action on $T^* \mathbb T^n$ induced by lifting translations of the torus $\mathbb T^n$. Define the following  non-smooth one-form away from the hypersurface
$Z=\{a_1 = 0\}$~:
$$\lambda_{tw, c} \log|a_1| d \theta_1 + \sum_{i=2}^n a_i d\theta_i.$$
Then, the form $\omega:=-d\lambda_{tw, c}$ is a $b$-symplectic form on $T^* \mathbb T^n$, called the \textit{twisted $b$-symplectic form} on $T^* \mathbb T^n$. In coordinates:
\begin{equation}
\omega_{tw, c}:=\frac{c}{a_1} d\theta_1\wedge d a_1 + \sum_{i=2}^n d\theta_i\wedge da_i.
\label{eq:twistedbform}
\end{equation}
\label{def:twistedbform}
\end{defi}

Observe that this twisted forms comes endowed with a local invariant: The constant $c$. The interpretation of this invariant is that this gives the period of the modular vector field.

We call the lift together with the $b$-symplectic form \eqref{eq:twistedbform} the \textbf{twisted $b$-cotangent lift} with modular period $c$ on the cotangent space of a torus.

As it was deeply studied in \cite{KM} the lifted action can be extended to groups of type $S^1\times H$ which turns out to be $b$-Hamiltonian in general.

%In a more general setting, the twisted cotangent lift is defined as follows. Consider a $(n-1)$-dimensional manifold $N$ and let $\lambda_N$ be the standard Liouville one-form on  $T^* N$. Endow the product $T^* (S^1 \times N) \cong T^* S^1 \times T^* N$ with the product structure $\lambda:= (\lambda_{tw,c}, \lambda_N)$ (defined for $a\neq 0$). The form $\omega = -d\lambda$ is a $b$-symplectic structure  with critical hypersurface given by $a=0$.
%
%Suppose $K$ is a Lie group acting on $N$ and consider the component-wise action of $G:=S^1\times K$ on $M:=S^1 \times N$, where $S^1$ acts on itself by rotations. Lift this action to $T^* M$ as described before. This construction, with $T^* M$ endowed with the $b$-symplectic form $\omega$, is called the \textit{twisted $b$-contangent lift}.

%If $(x_1,\ldots,x_{n-1})$ is a chart on $N$ and $(x_1,\ldots,x_{n-1},y_1,\ldots,y_{n-1})$ is the associated chart on $T^* N$, $\lambda$ has the following local expression:
%\begin{equation}
%\lambda = \log |a| d\theta + \sum_{i=1}^{n-1} y_i dx_i.
%\label{eq:logliouvilleform}
%\end{equation}
%
%This action is, again, Hamiltonian with moment map given by the contraction of the fundamental vector fields with $\lambda$:
%
%\begin{prop}
%The twisted $b$-cotangent lift on $M=S^1\times N$ is Hamiltonian with equivariant moment map $\mu$ given by
% \begin{equation}
%\langle\mu(p),X \rangle := \langle \lambda_p ,X^\#|_p \rangle,
%\end{equation}
%where $X^\#$ is the fundamental vector field of $X$ under the action on $T^* M$.
%\label{prop:twistmoment}
%\end{prop}

\subsubsection{$b$-Integrable systems}
A $b$-symplectic manifold/$b$-Poisson manifold can be seen as a standard Poisson manifold. Along the critical set $Z$ the Liouville tori determined by a standard integrable system have dimension $n-1$ which is not convenient to model integrable systems on $b$-symplectic manifolds where the critical set $Z$ represent the direction of \emph{infinity} in celestial mechanics and we would expect to have $n$-dimensional tori.

This is why in this context it is more natural to talk abou \emph{$b$-integrable system} as follows:
 \begin{defi}\label{def:pbintegrable}  A  \textbf{$b$-integrable system} on a $2n$-dimensional $b$-symplectic manifold $(M^{2n},\omega)$ is a set of $n$  $b$-functions which are pairwise Poisson commuting  $F=(f_1,\ldots,f_{n-1},f_n)$   with
$df_1 \wedge \dots \wedge df_n\neq 0$  as a section of $\wedge^n  ({^b} T^*(M))$ on a dense subset of $M$ and on a dense subset of $Z$. A point in $M$ is {\bf regular} if the vector fields $X_{f_1}, \dots, X_{f_n}$ are linearly independent (as \emph{smooth} vector fields) at it. \end{defi}

For these systems  an action-angle coordinate, proved in \cite{KMS},  shows the existence of a semilocal invariant in the neighbourhood of $Z$ (the modular period):

\begin{theorem}\label{thm:aa}
 Let $(M, \omega, F = (f_1, \dots, f_{n-1}, f_n = \log|t|))$ be a $b$-integrable system, and let $m \in Z$ be a regular point for which the integral manifold containing $m$ is compact, i.e. a Liouville torus $F_m$. Then there exists an open neighborhood $U$ of the torus $F_m$ and coordinates $(\theta_1,\dots,\theta_n,\sigma_1,\dots,\sigma_{n}): U \to  \mathbb T^n \times B^n$ such that
\begin{equation}
        \omega|_U =\sum_{i=1}^{n-1} d\sigma_i \wedge d\theta_i  + \frac{c}{\sigma_n} d\sigma_n \wedge d\theta_n,
\end{equation}
where the coordinates $\sigma_1,\dots,\sigma_n$ depend only on $F$ and the number $c$ is the modular period of the component of $Z$ containing $m$.
\end{theorem}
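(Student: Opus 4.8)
The plan is to reduce the $b$-symplectic action–angle statement to the classical Liouville–Mineur–Arnold theorem by a careful splitting of the last pair of coordinates, following the strategy that (one may expect) is used in \cite{KMS}. First I would invoke the structure of $Z$ described above: since $m\in Z$ is a regular point whose integral manifold $F_m$ is a compact torus, and $f_n=\log|t|$ is constant along $Z$, the level set $Z$ near $F_m$ fibers over an interval of values of $f_n$, but crucially $F_m\subset Z$ so the relevant tori we must thicken are the ones lying inside $Z$. Restricting $F'=(f_1,\dots,f_{n-1})$ to the cosymplectic hypersurface $Z$ (with its induced regular Poisson structure) gives a regular integrable system on $Z$ in the cosymplectic sense: the Hamiltonian vector fields $X_{f_1},\dots,X_{f_{n-1}}$ are tangent to $Z$, independent on a dense set, and together with the modular/Poisson direction they span. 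One then applies a cosymplectic version of action–angle (or equivalently the symplectic action–angle theorem on a symplectic leaf $L$, followed by mapping-torus gluing) to produce angle coordinates $\theta_1,\dots,\theta_{n-1}$ on the $(n-1)$-torus inside $Z$ and a transverse coordinate $\theta_n$ along the modular flow, together with action coordinates $\sigma_1,\dots,\sigma_{n-1}$ depending only on $F'$. The modular period $c$ of the component of $Z$ containing $m$ enters precisely as the period of the $\theta_n$-circle, by Definition \ref{def:modperiod}.

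Next I would extend these coordinates off $Z$. Write $t$ for the defining function of $Z$; since $f_n=\log|t|$ near $Z$, set $\sigma_n:=t$ (up to sign and a smooth positive factor) so that $f_n=\log|\sigma_n|$ and $Z=\{\sigma_n=0\}$. The Hamiltonian vector field $X_{f_n}$ of the $b$-function $f_n$ is, by the defining equation $\iota_{X_{f_n}}\omega=-df_n$, a genuine smooth vector field transverse to $Z$ (this is the standard fact that makes $\log|t|$ behave like a ``good'' last integral); its flow provides the transverse direction and lets one propagate $\theta_n$ and the torus $T^{n-1}$ of angles $\theta_1,\dots,\theta_{n-1}$ to a full neighborhood $U\cong \T^n\times B^n$. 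Because all $f_i$ Poisson-commute, the flows of $X_{f_1},\dots,X_{f_n}$ commute, giving a well-defined $\T^n$-action near $F_m$; the orbits are the Liouville tori, and choosing a Lagrangian-type section as in the classical proof produces the angle coordinates with the correct periods. The actions $\sigma_1,\dots,\sigma_{n-1}$ are defined by the usual period integrals $\sigma_i=\tfrac1{2\pi}\oint_{\gamma_i}\lambda$ over a basis of cycles, pulled back from $Z$, and hence depend only on $F$; $\sigma_n$ is the defining function, also depending only on $F$ through $f_n$.

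Finally I would verify the normal form of $\omega$. On the complement of $Z$ the form is symplectic and, by the commuting-flows/Lagrangian-section argument, takes the Darboux–Liouville shape $\sum_{i=1}^{n} d\sigma_i\wedge d\theta_i$ in some action-angle chart; but the last action is not $\sigma_n=t$ itself — rather one checks, using $\iota_{X_{f_n}}\omega=-d\log|\sigma_n|=-\tfrac{1}{\sigma_n}d\sigma_n$, that the conjugate structure in the $n$-th pair is $\tfrac{c}{\sigma_n}d\sigma_n\wedge d\theta_n$, with $c$ forced to be the modular period because the $\theta_n$-flow must have period $c$ (the modular flow on the component of $Z$). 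Matching the $b$-cotangent/twisted model of Definition \ref{def:twistedbform} confirms the coefficient $c$ is exactly this semilocal invariant. The main obstacle, I expect, is the gluing step: showing that the action–angle coordinates constructed on a symplectic leaf $L\subset Z$ extend consistently around the mapping torus $Z=(L\times[0,c])/\!\sim$ — i.e. that the monodromy $\phi$ preserves the action functions and shifts the angles by a closed (hence exact, after adjusting the section) $1$-form — and then that this extension survives being thickened transversally by the smooth flow of $X_{\log|t|}$ without picking up smooth error terms that would spoil the exact coefficient $c/\sigma_n$. Handling the non-smoothness of the Liouville $b$-form $\lambda$ carefully (working in the $b$-de Rham complex so that $\tfrac{d\sigma_n}{\sigma_n}$ is a legal closed form) is what makes the bookkeeping delicate but ultimately routine.
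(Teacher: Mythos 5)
The central step of your extension argument rests on a false premise: you assert that the Hamiltonian vector field $X_{f_n}$ of $f_n=\log|t|$ is ``a genuine smooth vector field transverse to $Z$'' and propose to propagate the coordinates off $Z$ by its flow. In fact $X_{\log|t|}$ is \emph{tangent} to $Z$: in $b$-Darboux coordinates $\omega=dx_1\wedge\frac{dy_1}{y_1}+\sum_{i\geq 2}dx_i\wedge dy_i$ one computes $X_{\log|y_1|}=-\partial/\partial x_1$, i.e.\ it is (up to sign) the modular vector field, transverse to the symplectic leaves \emph{inside} $Z$ but preserving $Z$ itself. More generally every Hamiltonian vector field on a $b$-Poisson manifold is tangent to the critical hypersurface, since $Z$ is a union of symplectic leaves of the dual Poisson structure. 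Consequently none of the flows of $X_{f_1},\dots,X_{f_n}$ ever leaves $Z$, and your thickening step collapses: there is no dynamical vector field in the system that moves in the $t$-direction. The transverse direction must instead be supplied by the base of the Liouville fibration, i.e.\ by observing that $\bar F=(f_1,\dots,f_{n-1},t)$ is a submersion with compact fibers near $F_m$, which trivializes a full neighborhood as $\T^n\times B^n$ with $t$ among the base coordinates.

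Note also that the paper does not reprove Theorem~\ref{thm:aa}; it quotes it from \cite{KMS} and instead proves in full the folded analogue (Theorem~\ref{AA}), whose proof mirrors the one in \cite{KMS}. That argument proceeds by (i) trivializing the Liouville foliation over $B^n$, (ii) uniformizing the periods of the joint $\R^n$-flow to obtain a genuine $\T^n$-action --- this is where the modular period $c$ enters, as the coefficient of $X_{\log|t|}$ in the period-lattice basis vector transverse to the leaves of $Z$, (iii) showing the action preserves $\omega$ and is Hamiltonian via an equivariant relative Poincar\'e lemma, so that the actions are $\sigma_i=\iota_{Y_i}\bar\alpha$ for an invariant primitive $\bar\alpha$, and (iv) a Darboux--Carath\'eodory-type theorem to land on the normal form. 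Your first paragraph (cosymplectic action--angle on $Z$ plus mapping-torus gluing) is a plausible alternative start, but without replacing the transversal-flow step by either the submersion argument or the semilocal Moser normal form $\omega=\frac{dt}{t}\wedge\pi^*\alpha+\pi^*\beta$ near $Z$, the proof never gets off the hypersurface, and the coefficient $c/\sigma_n$ is asserted rather than derived.
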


 In \cite{KM} this normal form was identified as a cotangent model:

\begin{theorem} Let $F=(f_1,\ldots,f_n)$ be a $b$-integrable system on the $b$-symplectic manifold $(M,\omega)$.  Then semilocally around a regular Liouville torus $\mathbb T$, which lies inside the exceptional hypersurface $Z$ of $M$, the system is equivalent to the cotangent model $(T^\ast \T^n)_{tw, c} $ restricted to a neighbourhood of $(T^\ast \mathbb T^n)_0$. Here $c$ is the modular period of the connected component of $Z$ containing  $\mathbb T$.
\end{theorem}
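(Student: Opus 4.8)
\section*{Proof sketch of the cotangent-model identification}

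The plan is to read off the twisted $b$-cotangent model directly from the $b$-action-angle normal form of Theorem~\ref{thm:aa}, the only real work being a relabelling of coordinates that matches the two normal forms exactly (including signs and the $b$-structure along $Z$) together with the verification that this relabelling intertwines the two moment maps.

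First I would apply Theorem~\ref{thm:aa}. After composing $F$ with a translation in the base, which changes neither the system nor its leaves, we may assume $F(m)=0$, so the Liouville torus $\mathbb T$ is the level $\{\sigma=0\}$. Theorem~\ref{thm:aa} supplies a neighbourhood $U$ of $\mathbb T$ and coordinates $(\theta_1,\dots,\theta_n,\sigma_1,\dots,\sigma_n)\colon U\to\mathbb T^n\times B^n$ with $Z\cap U=\{\sigma_n=0\}$, with $c$ the modular period of the component of $Z$ through $\mathbb T$, with
\[
\omega|_U=\sum_{i=1}^{n-1}d\sigma_i\wedge d\theta_i+\frac{c}{\sigma_n}\,d\sigma_n\wedge d\theta_n ,
\]
and—crucially—with the $\sigma_i$ depending on $F$ alone. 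Hence the joint fibres of $(\sigma_1,\dots,\sigma_n)$ are exactly those of $F$, and near $0$ there is a base diffeomorphism relating $(\sigma_1,\dots,\sigma_n)$ to $F$ which sends the $b$-function $f_n=\log|t|$ to $\log|\sigma_n|$ modulo a smooth function, both being defining $b$-functions for $Z$.

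Second, I would write down the comparison map. Let $(\theta',a')$ be the standard coordinates on $T^*\mathbb T^n$ from Definition~\ref{def:twistedbform}, with $Z=\{a'_1=0\}$ and $\omega_{tw,c}=\frac{c}{a'_1}d\theta'_1\wedge da'_1+\sum_{i=2}^{n}d\theta'_i\wedge da'_i$; recall that the twisted $b$-cotangent lift carries the $b$-integrable system whose joint level sets are those of the smooth functions $(a'_1,\dots,a'_n)$ (the $b$-Hamiltonians being $c\log|a'_1|,a'_2,\dots,a'_n$ up to sign). Define $\phi\colon U\to T^*\mathbb T^n$ by
\[
\phi(\theta_1,\dots,\theta_n,\sigma_1,\dots,\sigma_n)=\big(-\theta_n,-\theta_1,\dots,-\theta_{n-1},\ \sigma_n,\sigma_1,\dots,\sigma_{n-1}\big),
\]
a cyclic permutation moving the twisted slot $\sigma_n$ into the first position, combined with orientation reversals on the circle factors inserted precisely to absorb the signs produced by $d\sigma_i\wedge d\theta_i=-d\theta_i\wedge d\sigma_i$. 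A short computation gives $\phi^*\omega_{tw,c}=\omega|_U$; since $\phi$ is a signed permutation of coordinates adapted to $Z$ it is a genuine $b$-diffeomorphism onto a neighbourhood of $(T^*\mathbb T^n)_0$, not merely a symplectomorphism away from $Z$. Finally $\phi$ sends the fibres of $(\sigma_1,\dots,\sigma_n)$, hence of $F$, onto the fibres of $(a'_1,\dots,a'_n)$, hence of the moment map of the cotangent model, so $\phi$ is an equivalence of $b$-integrable systems. The constant $c$ agrees on the two sides because a $b$-symplectomorphism preserves the modular period and, by the remark following Definition~\ref{def:twistedbform}, the parameter $c$ in $\omega_{tw,c}$ is exactly that period.

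The argument carries essentially no analytic content beyond Theorem~\ref{thm:aa}; the one place asking for care—and the main, mild, obstacle—is to ensure the identification is valid as $b$-forms and $b$-functions, i.e.\ that $\phi$ matches the defining data of $Z$ on the two sides and that the residual smooth discrepancy between $\log|\sigma_n|$ and $f_n$ is genuinely absorbable into a base reparametrisation compatible with the $b$-structure. That is exactly what the freedom in the sign choices of $\phi$ and in the base diffeomorphism provides.
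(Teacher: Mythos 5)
Your proposal is correct and follows exactly the route the paper indicates: it invokes the action-angle normal form of Theorem \ref{thm:aa} and then identifies that normal form with the twisted $b$-cotangent model of Definition \ref{def:twistedbform} by an explicit signed permutation of coordinates (your pullback computation $\phi^*\omega_{tw,c}=\omega|_U$ checks out, and the matching of $Z=\{\sigma_n=0\}$ with $\{a_1'=0\}$ makes $\phi$ a genuine $b$-symplectomorphism intertwining the fibrations). This is precisely the content of the remark that the normal form of Theorem \ref{thm:aa} \emph{is} the cotangent model, so no further comment is needed.
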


\subsection{$b$-symplectic manifolds and folded symplectic manifolds as duals}
%caracteritzacio b^m?

In \cite{Deblog} the following theorem is proved in the more general setting of $b^m$-symplectic structures with singularities of higher order:

\begin{theorem}\label{thm:desing}
{ Let $\omega$ be a} $b$-symplectic structure  on a compact manifold $M$  and let $Z$ be its critical hypersurface.
There exists  a family of folded symplectic forms ${\omega_{\epsilon}}$ which coincide with  the $b$-symplectic form
    $\omega$ outside an $\epsilon$-neighborhood of $Z$.

\end{theorem}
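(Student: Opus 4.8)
The plan is to localize the problem near the critical hypersurface and to replace the logarithmic singularity of $\omega$ by a smooth interpolation whose derivative vanishes transversally along $Z$; away from $Z$ nothing needs to be changed.

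\textbf{Step 1 (semilocal normal form).} First I would invoke the semilocal structure theorem for $b$-symplectic forms (cf.\ \cite{GMP1, GMP2}): since $Z$ is compact, a tubular neighbourhood $U$ of $Z$ is $b$-symplectomorphic to $Z\times(-\delta,\delta)$, with $t$ the normal coordinate and $p\colon Z\times(-\delta,\delta)\to Z$ the projection, on which, after a suitable choice of defining function, $\omega$ takes the form
\[
\omega \;=\; \frac{dt}{t}\wedge p^*\alpha \;+\; p^*\beta,
\]
where $\alpha\in\Omega^1(Z)$ and $\beta\in\Omega^2(Z)$ are closed and $\alpha\wedge\beta^{n-1}$ is a volume form on $Z$ (this last fact being equivalent to $\omega$ being a $b$-symplectic form, since $\omega^n = n\,\tfrac{dt}{t}\wedge p^*(\alpha\wedge\beta^{n-1})$). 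The point of this reduction is that $\tfrac{dt}{t}=d\log|t|$, so deforming $\omega$ near $Z$ amounts to deforming the function $\log|t|$.

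\textbf{Step 2 (the interpolation and the deformed form).} Next I would fix, for each small $\epsilon>0$, a smooth \emph{even} function $f_\epsilon\colon(-\delta,\delta)\to\R$ with:
\begin{enumerate}
\item $f_\epsilon(t)=\log|t|$ for $\epsilon\le|t|<\delta$;
\item $f_\epsilon'(t)\neq0$ for $0<|t|<\epsilon$, while $f_\epsilon'(0)=0$ and $f_\epsilon''(0)\neq0$.
\end{enumerate}
Such a function exists by an elementary interpolation between $\log|t|$ and a function with a nondegenerate minimum at the origin (for instance a smoothing of $t\mapsto\tfrac12(t^2/\epsilon^2-1)+\log\epsilon$ near $0$, arranged so that $f_\epsilon'$ has no zero other than at the origin). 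I then define $\omega_\epsilon$ to equal $\omega$ on $M\setminus U$ and
\[
\omega_\epsilon \;:=\; df_\epsilon\wedge p^*\alpha \;+\; p^*\beta \;=\; f_\epsilon'(t)\,dt\wedge p^*\alpha + p^*\beta \qquad\text{on } U.
\]
On the overlap $\{\epsilon\le|t|<\delta\}$ one has $df_\epsilon=\tfrac{dt}{t}$, so the two expressions agree; hence $\omega_\epsilon$ is a globally defined \emph{smooth} $2$-form (the logarithmic pole has been excised) which coincides with $\omega$ outside the $\epsilon$-neighbourhood $\{|t|<\epsilon\}$ of $Z$.

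\textbf{Step 3 (verifying the folded axioms).} It then remains to check the three conditions of the definition of a folded symplectic form for $\omega_\epsilon$. Closedness is immediate: $d\omega_\epsilon=d\omega=0$ on $M\setminus U$, and on $U$, $d\omega_\epsilon = -df_\epsilon\wedge p^*d\alpha + p^*d\beta = 0$. For the transversality condition, outside $U$ we have $\omega_\epsilon=\omega$, which is symplectic there (as $Z\subset U$), while on $U$
\[
\omega_\epsilon^{\,n} \;=\; n\,f_\epsilon'(t)\,dt\wedge p^*\!\big(\alpha\wedge\beta^{n-1}\big),
\]
which vanishes exactly on $\{f_\epsilon'(t)=0\}=\{t=0\}=Z$; since $p^*(\alpha\wedge\beta^{n-1})$ is nowhere zero and $\tfrac{d}{dt}f_\epsilon'|_{t=0}=f_\epsilon''(0)\neq0$, the section $\omega_\epsilon^{\,n}$ of $\bigwedge^{2n}(T^*M)$ cuts the zero section transversally along the codimension-one submanifold $Z$. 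Finally $i^*\omega_\epsilon = i^*p^*\beta = \beta$ (because $p\circ i=\mathrm{id}_Z$); as $\alpha\wedge\beta^{n-1}$ is a volume form on the $(2n-1)$-manifold $Z$, $\beta^{n-1}$ is nowhere zero, so $\beta$ has rank at least $2n-2$, and being a $2$-form on an odd-dimensional manifold its rank is exactly $2n-2$. Thus $\omega_\epsilon$ is folded symplectic with folding hypersurface $Z$, and letting $\epsilon\to0$ produces the desired family.

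\textbf{Main obstacle.} Once the normal form of Step 1 is available the rest is a short computation, so the real work is Step 1 itself: producing the $t$-independent model $\tfrac{dt}{t}\wedge p^*\alpha + p^*\beta$ with $\alpha,\beta$ closed requires a preliminary $b$-Moser path argument (using the cosymplectic structure of $Z$ and the mapping-torus description recalled above) to absorb all $t$-dependence and the smooth part of the pole. A secondary technical point is the construction of $f_\epsilon$ satisfying property (2): one must smooth the corners of the piecewise model at $|t|=\epsilon$ without creating spurious critical points of $f_\epsilon$. The general $b^m$-statement proved in \cite{Deblog} follows the same scheme but with the parity of $f_\epsilon$ chosen according to $m$ — even $m$ yielding a symplectic desingularization and odd $m$ (in particular $m=1$ here) a folded one.
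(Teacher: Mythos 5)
Your proposal is correct and follows essentially the same scheme as the proof of this theorem in \cite{Deblog} (which the paper cites rather than reproves): reduce to the semilocal model $\frac{dt}{t}\wedge p^*\alpha+p^*\beta$ near $Z$ via the $b$-Moser/extension theorem of \cite{GMP1}, replace $\log|t|$ by a smooth even function $f_\epsilon$ agreeing with it for $|t|\ge\epsilon$ and having a single nondegenerate critical point at $t=0$, and check the folded axioms from $\omega_\epsilon^n=n\,f_\epsilon'(t)\,dt\wedge p^*(\alpha\wedge\beta^{n-1})$ and $i^*\omega_\epsilon=\beta$. Your closing remark about the parity of $f_\epsilon$ governing the symplectic versus folded outcome for $b^m$-structures also matches the cited reference.
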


As a consequence of this result any $b$-symplectic manifold admits a folded symplectic structure. However, it is well-known that the converse statement does not hold
as not every folded symplectic form can be presented as a desingularization of a $b$-symplectic structures. In particular, any compact orientable $4$-dimensional  manifold admits
a folded symplectic form \cite{anacannas} but not every $4$-dimensional compact manifold admits a $b$-symplectic manifold. For instance the 4-sphere $S^4$ does not admit a $b$-symplectic structure as it was proven in \cite{GMP1} that the  class determined by the $b$-symplectic form is non-vanishing.

As we will see in the next section not every folded integrable system on a folded symplectic manifold can be obtained via desingularization from a $b$-integrable system.

\section{Hamiltonian dynamics on folded symplectic manifolds}

 Let $(M,\omega)$ be a folded symplectic manifold, with folding hypersurface $Z$. Consider $p$ a point in $Z$, applying Theorem \ref{thm:martinet}  the folded-symplectic form $\omega$ can be written in a neighborhood of $p$ as:
$$ \omega= t dt \wedge dq +\sum_{i=2}^n dx_i \wedge dy_i. $$
\noindent with $t=0$ defining the folding hypersurface.
 The singularity in $\omega$ prevents the Hamiltonian equation $\iota_X\omega=-df$ from having a solution for every possible function $f$. So  not every function $f \in C^{\infty}(U) $ defines locally a Hamiltonian vector field.
\begin{ex}
\normalfont
    Let $(U;t,q,...,x_n,y_n)$ be a chart where $\omega$ takes the folded-Darboux form mentioned above. Take for example the function $f=t$. By imposing the Hamiltonian equation we get that $X=\frac{1}{t}\pp{}{q}$, which is not a well defined smooth vector field.
\end{ex}
Fortunately, we can characterize the set of functions which define smooth Hamiltonian vector fields.
\begin{lemma}\label{lem:foldfunc}
A function $f:M \rightarrow \R$ in a folded symplectic manifold $(M,\omega)$ has an associated smooth  Hamiltonian vector field $X_f$ if and only if $df|_Z(v)=0$ for every $v\in V$. Furthermore $X_f$ is tangent to $Z$.

\end{lemma}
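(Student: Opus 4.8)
The strategy is to work locally in the Martinet normal coordinates of Theorem~\ref{thm:martinet} and then patch the two characterizations (solvability of the Hamiltonian equation and tangency to $Z$) together, using that both statements are local and coordinate-independent in nature. First I would fix a point $p\in Z$ and Martinet coordinates $(t,q,x_2,y_2,\dots,x_n,y_n)$ with $Z=\{t=0\}$ and
$$\omega= t\,dt\wedge dq+\sum_{i=2}^n dx_i\wedge dy_i.$$
Writing a candidate vector field as $X_f=A\,\partial_t+B\,\partial_q+\sum_{i\ge 2}(C_i\,\partial_{x_i}+D_i\,\partial_{y_i})$ and imposing $\iota_{X_f}\omega=-df$, one computes $\iota_{X_f}\omega= tA\,dq - tB\,dt+\sum_{i\ge2}(C_i\,dy_i-D_i\,dx_i)$. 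Matching this with $-df=-\partial_t f\,dt-\partial_q f\,dq-\sum_{i\ge2}(\partial_{x_i}f\,dx_i+\partial_{y_i}f\,dy_i)$ gives $C_i=-\partial_{y_i}f$, $D_i=\partial_{x_i}f$ (always smooth), together with the two constrained equations $tA=-\partial_q f$ and $tB=\partial_t f$. Hence a smooth solution exists near $p$ if and only if $\partial_q f$ and $\partial_t f$ are both divisible by $t$ in the smooth category, i.e. vanish on $\{t=0\}$.

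Next I would translate "$\partial_q f$ and $\partial_t f$ vanish on $Z$" into the invariant condition $df|_Z(v)=0$ for all $v\in V=\ker i^*\omega$. The point is to identify $V$ in the normal coordinates: restricting $\omega$ to $Z=\{t=0\}$ kills the term $t\,dt\wedge dq$, so $i^*\omega=\sum_{i\ge2}dx_i\wedge dy_i$, which has kernel spanned by $\partial_t|_Z$ and $\partial_q|_Z$. Thus $V=\operatorname{span}\{\partial_t,\partial_q\}$ along $Z$, and $df|_Z(v)=0$ for all $v\in V$ is exactly the pair of conditions $\partial_t f|_{t=0}=\partial_q f|_{t=0}=0$. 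By Hadamard's lemma (division by $t$ for smooth functions vanishing on a hyperplane) this is precisely divisibility by $t$, so the local solvability criterion of the previous paragraph coincides with the stated condition. Since smoothness of a vector field is a local property and the condition $df|_Z(v)=0$ is coordinate-free, the local solutions glue to a global smooth $X_f$ exactly when the condition holds everywhere on $Z$; away from $Z$ there is never an obstruction.

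Finally, tangency to $Z$: from $tA=-\partial_q f$ with $\partial_q f$ divisible by $t$, the function $A=X_f(t)$ extends smoothly and, crucially, $A|_{t=0}=-(\partial_q f/t)|_{t=0}$ need not vanish in general — so I should be careful here. Actually $X_f(t)=A$, and $tA=-\partial_q f$ forces $A=0$ precisely where... let me instead argue directly: on $Z$ we have $i^*\omega=\pi^*$-type form of maximal rank $2n-2$, and $X_f$ restricted to $Z$ satisfies $\iota_{X_f}(i^*\omega)=-d(f|_Z)$; but also the component $A=X_f(t)$ is determined by $tA=-\partial_q f$, and since $\partial_q f$ vanishes to first order on $Z$ we get $A|_Z = -\partial_t\partial_q f|_Z \cdot$(nothing)... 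The cleanest route, which I would adopt, is: $X_f(t)\cdot t = \iota_{X_f}(t\,dt) $; pairing $\iota_{X_f}\omega=-df$ against $\partial_t$ gives $\omega(X_f,\partial_t)=-\partial_t f$, i.e. (reading off $\omega$) $-tB = -\partial_t f$ so $B=X_f(q)$ is smooth, while pairing against $\partial_q$ gives $tA=-\partial_q f$, hence $t\cdot X_f(t) = -\partial_q f$, which vanishes on $Z$ to first order, forcing $X_f(t)|_Z=0$. Therefore $X_f$ is tangent to $Z$. The main obstacle in writing this cleanly is keeping the sign and index bookkeeping straight in the wedge computation and invoking Hadamard's lemma at the right moment; conceptually the argument is a direct reading-off of the Martinet model. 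I would conclude by noting the converse direction (if $df|_Z(v)=0$ for all $v\in V$ then $X_f$ exists and is smooth) follows from the same explicit formulas, completing the equivalence.
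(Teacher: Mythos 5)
Your proof of the equivalence is correct and follows essentially the same route as the paper: work in Martinet coordinates, read off the components of the putative Hamiltonian vector field, and observe that smoothness of the $\partial_t$- and $\partial_q$-components amounts to divisibility of $\partial_t f$ and $\partial_q f$ by $t$, which (by Hadamard) is vanishing on $Z$, i.e.\ the condition $df|_Z(v)=0$ for $v\in V=\langle \partial_t,\partial_q\rangle$. That part is fine.

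The tangency claim, however, has a genuine gap. You conclude from $t\cdot X_f(t)=-\partial_q f$ and ``$\partial_q f$ vanishes on $Z$ to first order'' that $X_f(t)|_Z=0$. This inference is invalid: if $\partial_q f=tH$ with $H$ smooth, then $X_f(t)=-H$, and $H|_Z$ has no reason to vanish — first-order vanishing of $\partial_q f$ only buys you smoothness of $X_f(t)$, not its vanishing on $Z$. What actually makes tangency work is that the two divisibility conditions interact: integrating $\partial_t f=tG$ gives $f=t^2f_1+f_2(q,x_2,\dots,y_n)$ with $f_2$ independent of $t$, and then $t\mid\partial_q f=t^2\partial_q f_1+\partial_q f_2$ forces $\partial_q f_2\equiv 0$ (a function independent of $t$ divisible by $t$ is zero). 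Hence $\partial_q f=t^2\partial_q f_1$ vanishes to \emph{second} order, and $X_f(t)=-t\,\partial_q f_1$ does vanish on $Z$. This is precisely the step the paper carries out via its normal form $f=t^2f_1+f_2(x_2,\dots,y_n)$, and it is the step missing from your argument; your own hesitation in that paragraph (``need not vanish in general --- so I should be careful here'') flags exactly this point without resolving it.
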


\begin{proof}
Assume that $f$ has a well-defined smooth  Hamiltonian vector field at a point $p$ in $Z$. Take Darboux coordinates $(t,q,...,x_n,y_n)$ at a neighborhood $U$ of $p$. In these coordinates, the form can be written as:
$$ \omega= t dt \wedge dq + \sum_{i=2}^n dx_i \wedge dy_i. $$
Any vector field can be written as
$ X= a_1 \pp{}{t} + b_1 \pp {}{q} + ... + a_n \pp{}{x_n} + b_n\pp{}{y_n}$.
Imposing the Hamiltonian equation $\iota_X \omega= -df$ we obtain that
\begin{equation*}
\begin{cases}
    a_1 &= -\pp{f}{q} \frac{1}{t} \\
    b_1 &= \pp{f}{t} \frac{1}{t} \\
    a_i &= -\pp{f}{q}, \enspace i=2,...,n \\
    b_i &= \pp{f}{t}, \enspace i=2,...,n.
\end{cases}
\end{equation*}
The coefficients $a_1$ and $b_1$ well defined, we need that $-\pp{f}{q} \frac{1}{t}$ and $\pp{f}{t}\frac{1}{t}$ to be smooth. For this to hold, we need that $\pp{f}{q}=t H$ and $\pp{f}{t}=t F$ for some smooth function $H$ and $G$. From the second equation we get that $f=t^2f_1+ f_2(q,x_2,...,y_n)$ for some smooth functions $f_1,f_2$. The first equation implies that $\pp{f_2}{q}=0$. Hence $f$ has the form
\begin{equation} \label{foldfunc}
 f=t^2f_1 + f_2(x_2,...,y_n),
\end{equation}
which implies that $df(\pp{}{t})|_Z=df(\pp{}{q})|_Z=0$ since $Z=\{t=0\}$. Since $\pp{}{t}$ and $\pp{}{q}$ space $V=\ker \omega|_Z$, we get that $df(v)=0$ for each $v\in \ker \omega|_Z$. The converse is obviously true: if $df(v)=0$ for each $v\in \ker \omega |_Z$ then it defines a smooth Hamiltonian vector field.

It follows from equation \eqref{foldfunc} that $\pp{f}{q}=t^2 \pp{f_1}{q}$ which implies that $a_1|_{\{t=0\}}=-t\pp{f_1}{q}|_{\{t=0\}}=0$. We deduce that $X_f$, the Hamiltonian vector field of $f$, is tangent to $Z$.
\end{proof}
We denote these functions as folded functions.
\begin{defi}
A function $f:M \rightarrow \R$ in a folded symplectic manifold $(M,\omega)$ is a \textbf{folded function} if $df|_Z(v)=0$ for every $v\in V=\ker \omega|_Z$.
\end{defi}

Note that even if a Hamiltonian vector field $X_f$ is always tangent to $Z$, one can obtain non vanishing components of $X_f$ in the null line bundle $L$. If one takes $n$ folded functions, we will always have $df_1 \wedge ... \wedge df_n|_Z=0$ when we look it as a section of $\Lambda^nT^*M$. However, the $n$ functions can define $n$ linearly independent Hamiltonian vector fields. This yields the following definition:
\begin{defi}
An integrable system on a folded symplectic manifold $(M,\omega)$ with critical surface $Z$ is a set of functions $F=(f_1,...,f_n)$ such that they define Hamiltonian vector fields which are independent on a dense set of $Z$ and $M$, and commute with respect to $\omega$.

\end{defi}

Around the regular points of the integrable system, the expression of the functions can be simplified and as a consequence the Poisson bracket of the functions is well-defined:

\begin{lemma}\label{normalint}
Near a regular point of an integrable system, there exist coordinates $(t,q,x_2,...,y_n)$ such that $\omega = tdt\wedge dq + \sum_{i=2}^n dx_i \wedge dy_i$, and the integrable system has the form
\begin{align*}
    f_1 &= t^2/2 \\
    f_2 &= g_2(t,q,x_2,...,y_n) {t}^{k_2} + h_2(x_2,y_2,...,x_n,y_n) \\
    &\vdots \\
    f_n &= g_n(t,q,x_2,...,y_n) {t}^{k_n} + h_n(x_2,y_2,...,x_n,y_n),
\end{align*}
for $k_2, \dots, k_n\in \mathbb N$ all of them $\geq 2$
and $t$ is a defining function of $Z$.
\end{lemma}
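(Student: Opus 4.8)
The plan is to start from a regular point $p$ of the folded integrable system $F=(f_1,\dots,f_n)$, normalize the first integral, and then use the foliation by Liouville tori together with Lemma \ref{lem:foldfunc} to straighten out the remaining integrals. First I would invoke Martinet's theorem (Theorem \ref{thm:martinet}) to pick folded-Darboux coordinates $(t,q,x_2,y_2,\dots,x_n,y_n)$ near $p$ in which $\omega = t\,dt\wedge dq + \sum_{i=2}^n dx_i\wedge dy_i$ and $Z=\{t=0\}$. Since each $f_j$ is a folded function, Lemma \ref{lem:foldfunc} (equation \eqref{foldfunc}) tells me that in these coordinates $f_j = t^2 (f_j)_1 + (f_j)_2(x_2,\dots,y_n)$ for smooth functions $(f_j)_1,(f_j)_2$; this already produces the shape ``$t^2\cdot(\text{smooth}) + h_j(x_2,\dots,y_n)$'' but with exponent exactly $2$, so the content of the lemma is really to show that after a suitable change of coordinates one can take $f_1 = t^2/2$ exactly and arrange the other exponents $k_j\ge 2$ (possibly higher, when the ``$t^2$ part'' of $f_j$ degenerates).

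The key step is to produce the coordinate $t$ with $f_1 = t^2/2$. Because $p$ is a regular point, the Hamiltonian vector fields $X_{f_1},\dots,X_{f_n}$ are linearly independent at $p$ as smooth vector fields; in particular $X_{f_1}\ne 0$. Two cases arise. If $p\notin Z$, the system is genuinely symplectic near $p$ and the ordinary action–angle / Carathéodory–Jacobi–Lie normal form applies, giving coordinates in which $f_1$ is the first action and can be taken in the form $t^2/2$ after a further substitution $t\mapsto \sqrt{2f_1}$ (legitimate away from $Z$). If $p\in Z$, write $f_1 = t^2 g + h(x_2,\dots,y_n)$ from \eqref{foldfunc}; I would first argue that $h$ is locally constant: indeed $X_{f_1}$ must be independent from the other $X_{f_j}$ at $p\in Z$, and a short computation with the folded-Darboux formulas for the Hamiltonian vector fields (as in the proof of Lemma \ref{lem:foldfunc}) shows that $X_{f_1}|_Z$ has a nonzero component along the null line $L=\langle\partial/\partial q\rangle$ only if $g(p)\ne 0$, while the $x_i,y_i$-components of $X_{f_1}|_Z$ come from $dh$; regularity forces the configuration that lets us subtract the constant $h(p)$ and rescale so that $f_1 = t^2 g$ with $g(p)>0$. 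Then $\tilde t := t\sqrt{2g}$ is a new smooth defining function for $Z$ (since $g(p)\ne 0$), and replacing $t$ by $\tilde t$ — which only rescales $dt\wedge dq$, absorbable into $q$ — puts $\omega$ back in folded-Darboux form with $f_1 = \tilde t^{\,2}/2$.

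Once $f_1 = t^2/2$ is achieved, I would re-examine the remaining integrals: each $f_j$, $j\ge 2$, still has the form $t^2(f_j)_1 + h_j(x_2,\dots,y_n)$. Using that the $f_j$ Poisson-commute with $f_1 = t^2/2$ (the bracket being well-defined near a regular point, since the Hamiltonian fields are smooth), I would show $\{f_1,f_j\}=0$ translates into a transport equation for $(f_j)_1$ along the flow of $X_{f_1} = q$-translation (up to the factor $g$), which constrains $(f_j)_1$ but in general only forces the combined $t$-dependent part to vanish to order $\ge 2$; defining $k_j\in\mathbb N$ to be the order of vanishing in $t$ of $f_j - h_j$ at $p$ (which is $\ge 2$ by \eqref{foldfunc}) and writing $f_j - h_j = t^{k_j} g_j$ with $g_j$ smooth gives exactly the asserted form. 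The main obstacle I anticipate is the $p\in Z$ sub-case of normalizing $f_1$: one must be careful that subtracting the constant and rescaling by $\sqrt{2g}$ does not destroy either the smoothness of the coordinate change or the folded-Darboux normal form of $\omega$, and this is where the precise interplay between the regularity hypothesis and the structure of folded Hamiltonian vector fields on $L$ (Lemma \ref{lem:foldfunc}) must be used rather than routine symplectic linear algebra.
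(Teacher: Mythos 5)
Your proposal has a genuine gap at its central step, the normalization $f_1=t^2/2$. You claim that at a regular point $p\in Z$ the function $h$ in the decomposition $f_1=t^2g+h(x_2,\dots,y_n)$ of Equation \eqref{foldfunc} is forced to be locally constant by regularity. This is false: in Martinet coordinates on a $4$-dimensional model with $\omega=t\,dt\wedge dq+dx_2\wedge dy_2$, the pair $(f_1,f_2)=(t^2/2+x_2,\;x_2)$ is a folded integrable system with $X_{f_1}=\partial_q+\partial_{y_2}$ and $X_{f_2}=\partial_{y_2}$ independent and commuting everywhere on $Z$, yet $h=x_2$ is not constant. What regularity actually gives is weaker: since $\langle X_{f_1},\dots,X_{f_n}\rangle$ is an $n$-dimensional isotropic subspace of $(T_pZ,i^*\omega)$ and $i^*\omega$ has rank $2n-2$, this span must contain the null line $\ker i^*\omega$, so only \emph{some linear combination} of the $X_{f_i}$ lies in the kernel. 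The paper's proof therefore first replaces the generators of the system by a new basis so that $X_1$ lies exactly in $\ker i^*\omega$ along $U\cap Z$ (in the example above, $f_1-f_2=t^2/2$), and only then identifies $f_1$ with $t^2/2$. Your argument omits this change of basis, and without it the conclusion $f_1=t^2/2$ simply does not hold for the given $f_1$.

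A second, related problem is your coordinate change $\tilde t=t\sqrt{2g}$: you assert the effect on $\omega$ is ``absorbable into $q$,'' but $\tilde t\,d\tilde t\wedge dq=2tg\,dt\wedge dq+t^2\,dg\wedge dq$, and the term $t^2\,dg\wedge dq$ cannot be removed by redefining $q$ alone when $g$ depends on the other variables; restoring the Martinet form would require a further Moser-type argument you do not supply. The paper sidesteps both difficulties by building the coordinates around the system rather than starting from an arbitrary Martinet chart: after arranging $X_1\in\ker i^*\omega$ it straightens $X_1=\partial/\partial q$, chooses constant-rank Darboux coordinates for $i^*\omega$ on $Z$, and invokes the semiglobal normal form of Theorem \ref{thm:moser} with $\alpha=dq$ to obtain $\omega=t\,dt\wedge dq+\sum_{i\ge2}dx_i\wedge dy_i$; in these coordinates $\partial/\partial q$ is visibly the Hamiltonian vector field of $t^2/2$, and the form of $f_2,\dots,f_n$ then follows from Lemma \ref{lem:foldfunc} exactly as in your last paragraph. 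That final step of yours is fine, but the route to $f_1=t^2/2$ needs to be replaced.
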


\begin{proof}
 Denote the inclusion of $Z$ in $M$ by $i:Z \hookrightarrow M$. Since  the pullback to $Z$ of the folded symplectic form $i^*\omega$ has rank $2n-2$, there are at most $n-1$ independent Hamiltonian vector fields tangent to $Z$  such that  $\langle X_1,...,X_{n-1} \rangle$ has no component in $\ker i^*\omega$. This implies that at any regular point $p \in Z$ of an integrable system, one of the $n$ independent Hamiltonian vector fields $X_1,...,X_n$ has a component in $\ker i^*\omega$. We might assume it is the first one $X_1$.

Let us show that in the points close to $p$ in $Z$, this vector field $X_1$ can be written as $X_1=v+X'$, where $v\in \ker i^*w$ and $X' \in \langle X_2,...,X_n \rangle$.

Indeed if $X_1$ had a component in the complement of $\ker i^*\omega \cup \langle X_2,...,X_n \rangle$, it would have a component either in the symplectic orthogonal space to $\langle X_2,...,X_n \rangle$ with respect to $i^*\omega$ or in $TZ^\perp$. However, we know that the Hamiltonian vector fields with respect to $\omega$ are tangent to $Z$, and so $X$ would have a component in the symplectic orthogonal to $\langle X_2,...,X_n \rangle$ and would not  satisfy $\omega(X,X_i)=0$. In particular, we can take a new basis of Hamiltonian vector fields generating $\langle X_1,...,X_n \rangle$ in a neighborhood of $p$ such that $X_1$ lies exactly in the kernel of $i^*\omega$ in $U\cap Z$.

Take local coordinates in a neighborhood $U$ of $p$ such that $X_1=\frac{\partial}{\partial q}$. Take symplectic coordinates $(x_2,y_2,...,x_n,y_n)$ of $i^*\omega$, the existence of such coordinates follows from the Darboux theorem for closed two forms of constant rank \cite[Proposition 13.7]{marle}. We can now use Theorem \ref{thm:moser} with $\alpha=dq$ to conclude that
$$ \omega= tdt\wedge dq + \sum_{i=2}^n dx_i \wedge dy_i. $$

In these coordinates the vector field $X_1$ is the Hamiltonian vector field of $t^2/2$, hence $f_1=t^2/2$. The remaining functions $f_2,...,f_n$ are folded functions and can be expressed as in Equation \eqref{foldfunc}. This concludes the proof of the lemma.
\end{proof}

\subsection{Folded cotangent bundle}
In this subsection we recall the construction of \cite{H},  a dual of the $b$-cotangent bundle for folded symplectic manifolds.

\begin{defi}
Let $M$ a manifold and $Z$ a closed hypersurface. Let $V$ a rank $1$ subbundle of $i_Z^*TM$ so that for all $p\in Z$ the fiber $V_p$ is transverse to $T_pZ$. We define for each open subset $U\subset M$
$$ \Omega_V^1(U):=\{ \alpha \in \Omega^1(U)|\enspace \alpha|_V=0 \}, $$
the space of $1$-forms on $U$ vanishing on $V$. If $U\cap Z= \emptyset$ then it is just $\Omega^1(U)$.
\end{defi}

Following \cite{H} there exists a vector bundle $T_V^*M$ called the \textbf{folded cotangent bundle}, of rank $n$ whose global sections are isomorphic to $\Omega^1_V(M)$. This vector bundle is unique up to isomorphism, independently of the chosen $V$. For a small open neighborhood $U$ of a point in $Z$, there exist suitable coordinates $(x_1,...,x_{n-1})$ in $U\cap Z$ and a coordinate $t$ such that $(x_1,...,x_{n-1},t)$ are coordinates in $U$ and $T_V^*(U)$ is generated by $dx_1,...,dx_{n-1},tdt$.  The dual bundle to $T_V^*M$ is denoted by $T_VM$ and called the folded tangent bundle.

In this bundle there is a canonical folded symplectic form which is obtained by taking a Liouville form $\lambda_f$ which is canonical as it satisfies the \emph{Liouville-type} equation $\langle \lambda_f|_p, v \rangle = \langle p, (\pi_p)_*(v)\rangle$
for every $v \in T_V (T_V^*M)$ and $p\in T_V^*M$. In coordinates $(x_1,...,x_n,p_1,...,p_n)$ we can write
$$ \lambda= p_1 x_1dx_1 + \sum_{i=2}^n p_i dx_i. $$
Its derivative gives rise to a folded symplectic structure
$$ \omega_f=d\lambda= x_1 dp_1\wedge dx_1 + \sum_{i=2}^n dp_i \wedge dx_i $$
\noindent which looks like the Darboux-type folded symplectic structure.
The introduction of this bundle allows to restate the definition of a folded integrable system in terms of the folded cotangent bundle.

\begin{defi}
An integrable system on a folded symplectic manifold $(M,\omega)$ is a set of folded functions $F=(f_1,...,f_n)$ for which $df_1\wedge ... \wedge df_n \neq 0$  as sections of  {$\Lambda^n T_V^*M$} on a dense set of $M$ and $Z$,  and whose Hamiltonian vector fields commute with respect to $\omega$.

\end{defi}

Even if $\omega$ does not define a Poisson bracket in $Z$ because the Hamiltonian vector fields are not defined for non-folded functions, the bracket is well defined for folded functions and the commutation condition $\omega(X_{f_i},X_{f_j})=0$ for two Hamiltonian vector fields is still well-defined.

%
%
%\subsection{Cotangent models for integrable systems in singular symplectic manifolds of order one} \label{sec:cot}

\section{Motivating examples}

In this section we present a series of examples of folded integrable systems. In particular, we exhibit examples of folded integrable systems whose dynamics cannot possibly be modeled by $b$-integrable systems. This will motivate the development of the theory of folded integrable systems, and in particular of the existence of action-angle coordinates.

\subsection{Double collision in two particles system}

In the literature of celestial mechanics like the restricted 3-body problem or the n-body problem several regularization transformation associated to ad-hoc changes (like time reparametrization) bring singularities into the symplectic structure.
Below we describe a model of double collision in two particle systems where McGehee type changes are implemented.
We model a system of two particles under the influence of a potential energy function of the form $U(x)=-|x|^{-\alpha}$, with $\alpha >0$. In the phase space $(x,y)\in \R^2\times \R^2$ it is a Hamiltonian system with Hamiltonian function $F=\frac{1}{2} |y|^2 -|x|^{-\alpha}$. Let us introduce a notation for two constants: denote $\beta= \alpha/2$  and $\gamma=\frac{1}{\beta+1}$. By implementing the change of coordinates:
\begin{equation*}
\begin{cases}
x&= r^{\gamma}e^{i\theta} \\
y&= r^{-\beta \gamma}(v+iw)e^{i\theta}
\end{cases}
\end{equation*}
and scaling with a new time parameter $\tau$ such that $dt=rd\tau$ we obtain the equations of motion
\begin{equation*}
\begin{cases}
r'&= (\beta+1) rv \\
v'&= w^2+\beta(v^2-2) \\
\theta'&= w \\
w' &= (\beta-1)\omega v
\end{cases}.
\end{equation*}

We will model the collision set $\{ r=0 \}$ in the case $\beta=1$ as the folding hypersurface of a folded symplectic manifold endowed with  a folded integrable system. Let us consider the folded symplectic form $\omega= rdr\wedge dv + d\theta \wedge dw$ in the manifold $T^*(\R \times S^1) \cong \R^2 \times S^1 \times \R$ with coordinates $(r,v,\theta,w)$. We take the folded Hamiltonian function
$$ H= -\frac{r^2}{2}(w^2 + (v^2-2)) + \frac{w^2}{2}. $$
Observe that $dH=-r^2
vdv + (w^2 + v^2-2)2r dr + (w-r^2w)dw$, and the Hamiltonian vector field is
$$ X_H= -rv \pp{}{r} + (w^2+v^2-2) \pp{}{v} + (w+ r^2w) \pp{}{\theta} $$
 The equations of motion in the critical hypersurface $\{r=0\}$ coincide with the equations of motion in the collision manifold of the original problem, hence providing a folded Hamiltonian model for it. In fact, even the linear asymptotic behavior close to collision is captured by the model. Observe that $X$ commutes with $\pp{}{\theta}$, which is a Hamiltonian vector field for the function $f_2=w$. Hence the dynamics are modelled by a folded integrable system given by $F=(f_1=H,f_2)$ in $T^*(\R \times S^1)$ with folded symplectic structure $rdr\wedge dv + d\theta \wedge dw$.

%
%We use the procedure of folding given in \cite{CGW} to create other examples of folded integrable systems. Suppose we are given two symplectic manifolds $(M_1,\omega_1)$ and $(M_2,\omega_2)$ with boundary $\partial M_1 \cong \partial M_2$ and integrable systems $F_1, F_2$ satisfying that
%\begin{itemize}
%\item $F_1$ and $F_2$ are regular on a dense set of (respectively) $\partial M_1$ and $\partial M_2$.
%\item In a neighborhood of the boundaries $U_1$ and $U_2$, there is an orientation reversing diffeomorphism $\varphi: U_1 \rightarrow U_2$ such that $\varphi^*F_2=F_1$.
%\end{itemize}

\subsection{Folded integrable systems on toric origami manifolds}

Not all integrable systems on folded symplectic manifolds come from standard systems on symplectic manifolds after singularization transformations or regularization techniques as in the example above.  Take for instance $\R^4$ with the standard symplectic structure $\omega=dx_1 \wedge dy_1 + dx_2 \wedge dy_2$. The function $f_1=x_1^2+y_1^2+x_2^2+y_2^2$ and $f_2=x_1y_2-x_2y_1$ commute with respect to $\omega$. There is a natural folding map from the sphere $S^4$ to $\bar{D^4}$, that we denote $\pi$. It is a standard fact that $\pi^*\omega$ is a folded symplectic structure in $S^4$, and in fact an origami symplectic structure. Taking $F=(\pi^*f_1,\pi^*f_2)$ yields an example of a folded integrable system in $S^4$ with its induced folded symplectic structure. Note that this is an example of an integrable system on a singular symplectic manifold which is not $b$-symplectic, as shown by the obstructions in \cite{GMP1} and \cite{MO}.

\subsection{Symplectic manifolds with fibrating boundary}

Consider a symplectic manifold with boundary such that close to the boundary the symplectic form tends to degenerate and admits adapted Martinet-Darboux charts such that the boundary has local equation  $x_1=0$ and the symplectic form degenerates on the boundary with the following local normal form:   $$\omega= x_1 dx_1 \wedge dy_1 + dx_2 \wedge dy_2 + \ldots + dx_n \wedge dy_n. $$
Let us take as starting point some integrable system naturally defined on a manifold with boundary. Assume the folding hypersurface fibrates by circles over a compact symplectic base (origami type). It would be enough to consider an integrable system on the $(2n-2)$-symplectic base $f_2, \ldots f_n$ and add $t^2$ as $f_1$.
The set $(t^2, f_2, \ldots, f_n)$ defines a folded integrable system. Observe that complete integrability comes as a consequence of Theorem \ref{thm:moser}.

\subsection{Product of folded surfaces with symplectic manifolds endowed with integrable systems}
Take an orientable surface $\Sigma$, and $\omega$ a non-vanishing two form. Denote $t$ any function in $\Sigma$ which is transverse to the zero section. The critical set is a finite number of closed curves $\gamma_j, \enspace j=1,...,k$. Then the function $t^2$ defines a folded integrable system in $(\Sigma, t\omega)$, where $t\omega$ is a folded symplectic structure. Let $F=(f_1,...,f_n)$ be an integrable system in a symplectic manifold $(M^{2n},\omega_1)$. Then $(t^2,f_1,...,f_n)$ defines a folded integrable system in the manifold $M^{2n}\times \Sigma$ endowed with the folded symplectic form $\omega_f=t\omega + \omega_1$. In fact, taking any $(n+1)$-tuple of the form $(t^2\sum_{i=1}^{n} \lambda_i f_i, f_1,...,f_n)$ for some non trivial $n$-tuple of constants $\lambda_i$  yields a folded integrable system. The critical set is of the form $Z= \sqcup_{j=1}^k \gamma_j \times M^{2n}$.

\subsection{Origami templates}
The study of toric folded symplectic manifolds was initiated in \cite{anarita} in the origami case (see \cite{H} for the general case).

Toric actions and integrable systems have always been hand-in-hand. In particular the action-angle coordinate theorem proof that we will provide in this article uses intensively
this correspondence. So in particular a toric manifold provides examples of integrable systems which are described by a global Hamiltonian action of a torus. Indeed any integrable system can be semilocally described in these terms (as we will see in the next section).

The classical theory of toric symplectic manifolds is closely related to a theorem by Delzant \cite{Delzant} which gives a one-to-one correspondence between toric symplectic manifolds and a special type of convex polytopes (called Delzant polytopes) up to equivalence. Grosso modo, toric symplectic manifolds can be classified by their moment polytope, and their topology can be read directly from the polytope in terms of equivariant cohomology. In \cite{holm, holm2} the authors examine the toric origami case and describe how toric origami manifolds can also be classified by their combinatorial moment data.

Origami templates form a visual way to describe toric origami manifolds and thus in particular integrable systems on folded symplectic manifolds.

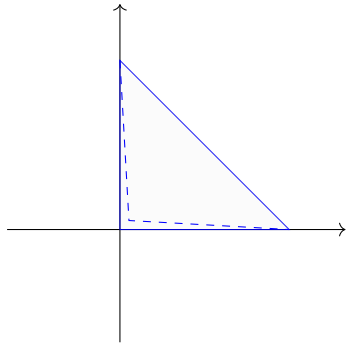
\begin{figure}[!h]
\begin{center}
\begin{tikzpicture}[scale=3]

%[label=above:A]%
\coordinate  (A) at (0,0);
\coordinate (B) at (3/4,0);
\coordinate (C) at (0,3/4);

\draw[->] (-1/2,0) -- (1,0) node[right] {};
\draw[->] (0,-1/2) -- (0,1) node[above] {};

\draw[-, fill=black!15, opacity=0.1] (A)--(B)--(C)--cycle;
\draw[-, opacity=.8, color=blue] (A)--(B)--(C)--cycle;

\draw[dashed,color=blue] (C)--(0.04,0.04)--(B);

\end{tikzpicture}
\end{center}
\caption{Origami template of Example 4.2}
\end{figure}

 Toric origami manifolds are classified by combinatorial origami templates which overlap Delzant's polytopes in an special way providing pictorially beautiful examples of folded integrable systems.

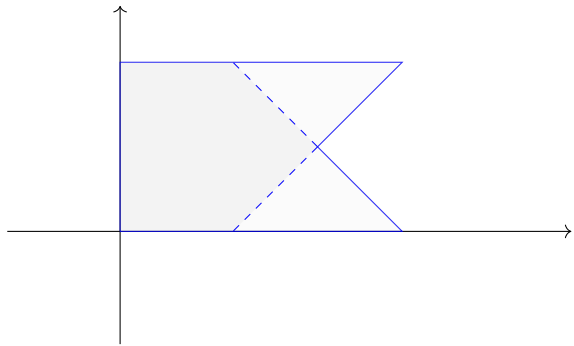
\begin{figure}[!h]
\begin{center}
\begin{tikzpicture}[scale=3]

%[label=above:A]%
\coordinate  (A) at (0,0);
\coordinate (B) at (3/4,0);
\coordinate (C) at (0,3/4);

\draw[->] (-1/2,0) -- (2,0) node[right] {};
\draw[->] (0,-1/2) -- (0,1) node[above] {};

\draw[-, fill=black!15, opacity=0.1] (A)--(0,3/4)--(1.25,3/4)--(7/8,3/8)--(1.25,0)--(A);
\draw[-, opacity=.8, color=blue] (A)--(0,3/4)--(1.25,3/4)--(7/8,3/8)--(1.25,0)--(A);

\draw[dashed,color=blue] (7/8,3/8)--(4/8,0);
\draw[dashed,color=blue] (7/8,3/8)--(4/8,3/4);

\fill[fill=black!30, opacity=0.1] (A)--(0,3/4)--(4/8,3/4)--(7/8,3/8)--(4/8,0)--(A);

\end{tikzpicture}
\end{center}
\caption{Origami template corresponding to the radial blow-up of two Hirzebruch surfaces.}
\end{figure}

\subsection{The folded spherical pendulum}

Consider the spherical pendulum on $S^2$ defined as follows: Take spherical coordinates $(\theta,\phi)$ with $\theta \in (0,\pi)$ and $\phi \in (0,2\pi)$ if we denote each momentum  as $P_\theta$ and $P_\phi$ respectively,  the Hamiltonian function is
$$ H= \frac{1}{2} (P_\theta^2+\frac{1}{\sin^2\theta} P_\phi^2)+\cos \theta. $$
Instead of taking the standard symplectic form in $T^*S^2$ we consider the folded symplectic form
$$ \omega= P_\phi dP_\phi \wedge d\phi + dP_\theta \wedge d\theta. $$
Computing the Hamiltonian vector field associated to $H$ we get
$$ X_H= \frac{1}{\sin^2 \theta} \pp{}{\phi} + P_\theta \pp{}{\theta} +(\sin \theta + \frac{\cos \theta}{\sin^3\theta} P_\phi^2) \pp{}{P_\theta}. $$
This vector field clearly commutes with $\pp{}{\phi}$, which is the Hamiltonian vector field of $f=P_\phi^2$. Observe furthermore that
$$ dH \wedge dP_\phi^2= -(\sin \theta +\frac{\cos \theta}{\sin^3 \theta}P_\phi^2)2P_\phi d\theta \wedge dP_\phi, $$
which is nondegenerate on a dense set of $M$ and on a dense set of $Z$ when seen as a section of the second exterior product of  the folded cotangent bundle.
 The manifold is in fact $M=T^*(S^2\setminus \{N,S\})$, i.e. we are taking out the poles of the sphere. In this sense $M$ is equipped with an origami symplectic form: the critical set is $T^*(S^2\setminus \{N,S\})$ and the null line bundle is an $S^1$ fibration generated by $\pp{}{\varphi}$.

Observe that dynamically this system is different from the standard spherical pendulum. When $P_\phi=0$, the vector field can have a non vanishing $\pp{}{\phi}$ component.

\subsection{A folded integrable system which cannot be modelled as a $b$-integrable system}

Consider $S^2$ with the folded symplectic form $\omega=hdh\wedge d\theta$. A folded function whose exterior derivative is a non-vanishing  one-form (when considered as section of the folded cotangent bundle) on a dense set of $M$ and of $Z$ defines a folded integrable system.
Take for instance $f= \cos \theta h^2$, which satisfies this condition. Computing its Hamiltonian vector field we obtain
$$X_f= h\sin \theta \pp{}{h} + 2\cos \theta \pp{}{\theta}. $$
This vector field vanishes at some points in the critical locus $Z=\{h=0\}$. A $b$-integrable system on a surface $\Sigma$ is defined by a function $f=c\log{h}+g$ with $g\in C^\infty (\Sigma)$. In particular its Hamiltonian vector field cannot vanish at any point on the critical hypersurface, as it is happening in this example of folded integrable system. {Thus, even if the structure $hdh\wedge d\theta$ can be seen as the desingularization of $\frac{1}{h}dh\wedge d\theta$, the dynamics of this folded integrable system cannot be modeled using the $b$-symplectic structure.}

\subsection{Cotangent lifts for folded symplectic manifolds}\label{fcotangent}

In this section we describe the cotangent lift in the set-up of folded symplectic manifolds.

When the group acting on the base is a torus this  procedure provides examples of folded integrable systems.

Consider a Lie group $G$ acting on $M$ by an action $\phi: G\times M \longrightarrow M$.
\begin{defi}
The cotangent lift of $\phi$ is the action on $T^*M$ given by $\hat \phi_g := \phi_{g^{-1}}^*$, where $g \in G$.
\end{defi}
The following commuting diagram holds:
\begin{align*}
\begin{diagram}
\node{T^\ast M}  \arrow{e,t}{\hat{\phi_g}}
\arrow{s,l}{\pi}
\node{T^\ast M} \arrow{s,r}{\pi}\\
\node{ M}  \arrow{e,t}{\phi_g}  \node{M}
\end{diagram}
\end{align*}
where $\pi$ is the projection from $T^*M$ to $M$. The cotangent bundle has the symplectic form $\omega=-d\lambda$ where $\lambda$ is the Liouville form. This form is defined by the property $ \langle \lambda_p , v \rangle = \langle p, (\pi_p)_*(v)\rangle$, where $v \in T(T^*M)$ and $p \in T^*M$. It can be shown easily that the cotangent lift is a Hamiltonian action with momentum map $\mu:T^*M \rightarrow \mathfrak{g}^*$ given by
$$  \langle\mu(p),X \rangle := \langle \lambda_p ,X^\#|_{p} \rangle =\langle p,X^\#|_{\pi(p)}\rangle.   $$
Here $X^\#$ denotes the fundamental vector field of $X$ associated to the action. The Liouville form is invariant by the action which implies the invariance of the moment map. In particular, the map is Poisson.

The construction called $b$-symplectic cotangent lift for $b$-symplectic manifolds done in \cite{KM} can be similarly done in the folded symplectic case which we will do below.

For the standard Liouville form in the folded cotangent bundle, the singularity is in the base space, and we would like to have it on the fiber. A different form, that we call \textbf{twisted folded Liouville form} can be defined on $T_V^*S^1$ with coordinates $(\theta,p)$:
$$  \lambda_{tw}= \frac{p^2}{2} d\theta_1.   $$
this way the singularity is in the fiber, and we can apply it to define a folded cotangent lift on the torus. Let $\T^n$ be the manifold and the group acting by translations, and take the coordinates $(\theta_1,...,\theta_n, a_1,...,a_n)$ on $T^*M$. The standard symplectic Liouville form in these coordinates is
$$ \lambda = \sum_{i=1}^n p_i d \theta_i. $$

The moment map $\mu_{can}: T^\ast \T^n \to \mathfrak{t^\ast} $ of the lifted action with respect to the canonical symplectic form is
$$ \mu_{can}(\theta, p) = \sum_i p_i d\theta_i $$
where the $\theta_i$ are seen as elements of $ \mathfrak{t^\ast}$. In fact, one can identify the moment map as just the projection of $T^*\T^n$ into the second component since $T^*\T^n \cong \T^n \times \mathbb{R}^n$.

\hfill \newline
This torus action in the cotangent bundle of the torus can be seen as a folded-Hamiltonian action with respect to a folded symplectic form. Similarly to the Liouville one-form we define the following singular form away from the hypersurface $Z=\{p_1 = 0\}$~:
$$ p_1^2 d \theta_1 + \sum_{i=2}^n p_i d\theta_i.$$
The negative differential gives rise to a folded symplectic form called twisted folded symplectic form on $T^\ast \T^n$:
\begin{equation*}\label{twistedform}
 \omega_{tw, f}:=p_1 d\theta_1\wedge d p_1 + \sum_{i=2}^n d\theta_i\wedge dp_i  .
\end{equation*}
The moment map is then
\begin{equation*}\label{eq:bmucan}
\mu_{tw, f}=(p_1^2,p_2, \ldots, p_n),
\end{equation*}
where we identify $\mathfrak{t^\ast}$ with $\R^n$ as before.

We call this lift the \textbf{folded cotangent lift}. Note that, in analogy to the symplectic case, the components of the moment map define a folded integrable system on $(T^\ast \T^n, \omega_{tw, f})$.
\hfill \newline

\begin{rem}
As we will see  in the next section, the folded cotangent lift does not  always serve as the semilocal model for an integrable system in the neighbourhood
of a Liouville torus, in contrast to what happens in symplectic and $b$-symplectic geometry.
\end{rem}

\section{Construction of action-angle coordinates}

In this section we prove existence of action-angle coordinates for singular symplectic manifolds of order one. One may have the temptation to use the desingularization and the action-angle coordinate theorem proved in \cite{KMS} to conclude. However, as we saw in previous sections, not every folded integrable system can be seen as a desingularized $b$-integrable system and  thus a complete proof is needed.

\subsection{Topology of the integrable system}
We first show that for a folded integrable system there is a foliation by Liouville tori in the neighborhood of a regular fiber of the integrable system. For this it is important to observe the following:

 Arguing as in the proof of Lemma \ref{normalint}, the kernel of $i^*\omega$ is generated by the joint distribution of the Hamiltonian vector fields of $f_1,...,f_n$ at every point $p$ of a neighborhood of a regular fiber. Hence, we can assume that $f_1=t^2/2$ for some semi-local coordinate $t$ defining $Z$.  The foliation given by the Hamiltonian vector fields of $F$ coincides with the foliation described  by the level sets of $\bar{F}=(t,f_2,...,f_n)$ because by definition the Hamiltonian vector field of $t^2/2$ is tangent to the level sets of this $t^2/2$, and hence also to the level sets of $t$. The same argument in \cite{LMV} gives a commutative diagram

\begin{center}
\begin{tikzcd}
U \arrow{r}{\varphi} \arrow[swap]{dr}{\bar F} & \T^n\times B^n  \arrow{d}{\pi} \\%
 & B^n
\end{tikzcd}
\end{center}
and proves:

\begin{prop}
 Let $p \in Z$ be a regular point of a folded-integrable system $(M,\omega,F)$. Assume that the integral manifold $\mathcal{F}_p$ is compact. Then there is neighborhood $U$ of $\mathcal{F}_p$ and a diffeomorphism
 $$ \varphi: U\cong \mathbb{T}^n\times B^n $$
 which takes the foliation $\mathcal{F}$ to the trivial foliation $\{ \mathbb{T}^n \times \{b \} \}_{b\in B^n }$.
\end{prop}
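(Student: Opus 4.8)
The plan is to adapt the classical argument for the topological (Liouville) part of the action-angle theorem, as in \cite{LMV}, to the folded setting. First I would reduce to the modified moment map $\bar F = (t, f_2, \dots, f_n)$: by Lemma \ref{normalint} and the remark preceding the proposition, near the regular compact fiber $\mathcal{F}_p$ we may take $f_1 = t^2/2$ with $t$ a defining function for $Z$, and the foliation $\mathcal{F}$ by the (joint) orbits of the Hamiltonian vector fields $X_{f_1}, \dots, X_{f_n}$ coincides with the fibers of $\bar F$, since $X_{t^2/2} = t X_{\text{something involving }\partial/\partial q}$ is tangent to the levels of $t$. Crucially, by Lemma \ref{lem:foldfunc} every $X_{f_i}$ is a genuine smooth vector field tangent to $Z$, and on a neighborhood of the regular fiber the $X_{f_i}$ are linearly independent; moreover they pairwise commute because the $f_i$ Poisson commute (the bracket being well-defined for folded functions). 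So on $U$ we have $n$ commuting, pointwise independent, complete (after possibly shrinking) smooth vector fields whose orbits are the fibers of $\bar F$.

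Next I would invoke the standard fact that a compact connected leaf $\mathcal{F}_p$ of such a foliation, carrying a locally free action of $\R^n$ by commuting complete vector fields, is diffeomorphic to a torus $\T^n$: the isotropy lattice of the $\R^n$-action is a full-rank lattice, so $\mathcal{F}_p \cong \R^n/\Lambda \cong \T^n$. Then, using that $\bar F$ is a submersion near $\mathcal{F}_p$ (its differential has rank $n$ because the $X_{f_i}$ span an $n$-dimensional complement to $\ker d\bar F$), I would apply the Ehresmann-type argument / the tubular neighborhood construction exactly as in \cite{LMV}: flow out transversal coordinates using the commuting flows to trivialize the fibration $\bar F : U \to B^n$ over a ball $B^n$, producing a diffeomorphism $\varphi : U \xrightarrow{\sim} \T^n \times B^n$ carrying $\mathcal{F}$ to the trivial foliation $\{\T^n \times \{b\}\}$. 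This makes the asserted commutative diagram with $\pi$ commute by construction.

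The main point requiring care — and the step I expect to be the real obstacle — is checking that nothing in the folded degeneracy breaks the argument: the symplectic form $\omega$ is singular along $Z$, but the Hamiltonian vector fields $X_{f_i}$ and their flows are perfectly smooth there (this is precisely the content of Lemma \ref{lem:foldfunc}), so the foliation and its leaves extend smoothly across $Z$, and the leaf through the regular point $p \in Z$ is an honest compact submanifold. One must also verify that the reparametrization $f_1 \mapsto t$ does not change the foliation — it does not, since $t^2/2$ and $t$ have the same level sets away from $t=0$ and, by the tangency statement in Lemma \ref{lem:foldfunc}, $X_{f_1}$ vanishes on the directions obstructing this at $Z$ only in its $\partial/\partial q$ component while still being tangent to $\{t = \text{const}\}$. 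Once these smoothness points are settled, the remainder is the purely topological Liouville argument and goes through verbatim as in the symplectic case, which is why I would simply cite \cite{LMV} for that portion rather than reproduce it.
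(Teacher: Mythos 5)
Your proposal is correct and follows essentially the same route as the paper: reduce to the modified map $\bar F=(t,f_2,\dots,f_n)$ via Lemma \ref{normalint} with $f_1=t^2/2$, observe that the foliation by the commuting smooth Hamiltonian vector fields coincides with the fibers of $\bar F$, and then invoke the standard topological Liouville/trivialization argument of \cite{LMV}. Your additional care about smoothness of the $X_{f_i}$ across $Z$ (via Lemma \ref{lem:foldfunc}) is exactly the point the paper relies on implicitly.
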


\begin{figure}
\centering
\begin{tikzpicture}
%%%%%%%%%regprogramming

\pgfmathsetmacro{\sizer}{1.1}
\pgfmathsetmacro{\basept}{5}	

\pgfmathsetmacro{\xone}{0}
\pgfmathsetmacro{\xtwo}{1.5}
\pgfmathsetmacro{\xthree}{3}
\pgfmathsetmacro{\xfour}{4.5}
\pgfmathsetmacro{\xfive}{6}
\pgfmathsetmacro{\xsix}{7.5}
\pgfmathsetmacro{\xseven}{9}

\pgfmathsetmacro{\ymid}{4.25}
\pgfmathsetmacro{\ytop}{5.65}
\pgfmathsetmacro{\ybottom}{2.85}

\DrawFilledDonutops{\xone}{\ymid}{.55*\sizer}{1.2*\sizer}{-90}{vlred}{very thick}{white}
\DrawFilledDonutops{\xtwo}{\ymid}{.55*\sizer}{1.2*\sizer}{-90}{vlred}{very thick}{white}
\DrawFilledDonutops{\xthree}{\ymid}{.55*\sizer}{1.2*\sizer}{-90}{vlred}{very thick}{white}
\DrawFilledDonutops{\xfour}{\ymid}{.55*\sizer}{1.2*\sizer}{-90}{vlblue}{very thick}{white}
\DrawFilledDonutops{\xfive}{\ymid}{.55*\sizer}{1.2*\sizer}{-90}{vlred}{very thick}{white}
\DrawFilledDonutops{\xsix}{\ymid}{.55*\sizer}{1.2*\sizer}{-90}{vlred}{very thick}{white}
\DrawFilledDonutops{\xseven}{\ymid}{.55*\sizer}{1.2*\sizer}{-90}{vlred}{very thick}{white}

\DrawDonut{\xone}{\ymid}{.55*\sizer}{1.2*\sizer}{-90}{dred}{very thick}
\DrawDonut{\xtwo}{\ymid}{.55*\sizer}{1.2*\sizer}{-90}{dred}{very thick}
\DrawDonut{\xthree}{\ymid}{.55*\sizer}{1.2*\sizer}{-90}{dred}{very thick}
\DrawDonut{\xfour}{\ymid}{.55*\sizer}{1.2*\sizer}{-90}{dblue}{very thick}
\DrawDonut{\xfive}{\ymid}{.55*\sizer}{1.2*\sizer}{-90}{dred}{very thick}
\DrawDonut{\xsix}{\ymid}{.55*\sizer}{1.2*\sizer}{-90}{dred}{very thick}
\DrawDonut{\xseven}{\ymid}{.55*\sizer}{1.2*\sizer}{-90}{dred}{very thick}

\draw[very thick,magenta,vdred, ->](\xone, \ybottom - 0.3) -- +(0, -1);
\draw[very thick, vdred,  ->] (\xtwo, \ybottom - 0.3) -- +(0, -1);
\draw[very thick, vdred, ->] (\xthree, \ybottom - 0.3) -- +(0, -1);
\draw[very thick, blue, ->] (\xfour, \ybottom - 0.3) -- +(0, -1);
\draw[very thick,vdred, ->] (\xfive, \ybottom - 0.3) -- +(0, -1);
\draw[very thick,vdred, ->] (\xsix, \ybottom - 0.3) -- +(0, -1);
\draw[very thick,vdred, ->] (\xseven, \ybottom - 0.3) -- +(0, -1);
\draw[ultra thick, red](\xone - 1, \ybottom - 1.5) -- (\xseven + 1, \ybottom - 1.5);

\end{tikzpicture}
\caption{Fibration by Liouville tori: In the middle fiber (in blue) of the point $F(p)$, the neighbouring Liouville tori in red.}
\end{figure}
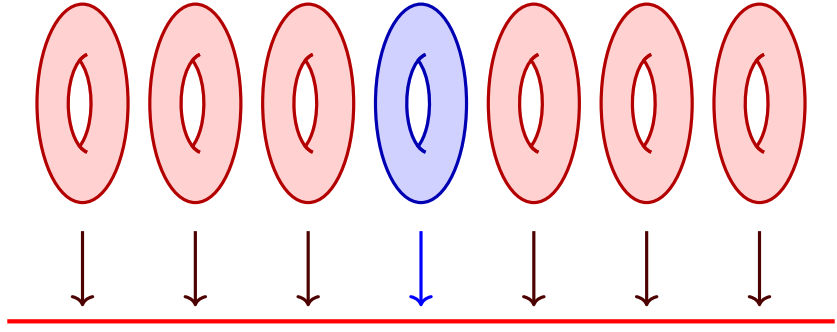

 We now  prove a Darboux-Carath\'eodory theorem for folded symplectic manifolds to (locally) complete a set of folded functions which commute with respect to $\omega$.
We do this applying the arguments of the proof of Darboux theorem provided in \cite{Arn}. The Darboux-Carathéodory theorem will be a key point in the proof of existence of action-angle coordinates.

\begin{theorem}[Folded Darboux-Carath\'eodory theorem]\label{thm:Cara}
    Let $p\in Z$ be a point of the folding hypersurface of a folded symplectic manifold $(M,\omega)$ and let $t$ be the function defining $Z$. Consider $f_1,...,f_n$ to be $n$ folded functions whose Hamiltonian vector fields are smooth, independent at $p$ and commute pairwise with respect to $\omega$. Then in a neighborhood $U$ of $p$ there exists $n$ functions $q_1,...,q_n$ such that near $p$ the folded symplectic form is written as
\begin{equation}\label{eq:foldDC}
    \omega= \sum_{i=1}^n dq_i \wedge df_i.
\end{equation}
 A system of coordinates is given by $q_1,...,q_n$ and some coordinates $t,y_2,..,y_n$ such that the $f_i$ only depend on the latter.
\end{theorem}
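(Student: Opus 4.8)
\emph{Proof proposal.} The plan is to adapt the flow-out proof of the classical Darboux--Carath\'eodory theorem (in the spirit of \cite{Arn}), the only new ingredient being careful bookkeeping of the null directions of $\omega$ along $Z$, which I will do in Martinet coordinates. First I would record the basic facts about the fields $X_{f_i}$. By Lemma \ref{lem:foldfunc} each $X_{f_i}$ is a genuine smooth vector field on $M$, tangent to $Z$. On the dense open set $M\setminus Z$ the form $\omega$ is symplectic, so there the identities $[X_{f_i},X_{f_j}]=X_{\{f_i,f_j\}}=0$ and $X_{f_i}(f_j)=\{f_i,f_j\}=0$ hold; since all the objects involved are smooth on $M$ and $M\setminus Z$ is dense, these identities hold on all of $M$. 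Consequently the composition of flows
$$\Phi^{(s_1,\dots,s_n)}:=\phi^{s_1}_{X_{f_1}}\circ\cdots\circ\phi^{s_n}_{X_{f_n}}$$
is, for small $s$, a well-defined local $\mathbb R^n$-action near $p$ with $\Phi_*(\partial/\partial s_i)=X_{f_i}$, it preserves $Z$ (each $X_{f_i}$ is tangent to $Z$), and every $f_j$ is $\Phi$-invariant.

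The main step is the construction of a suitable transversal. Put $\omega$ in the Martinet normal form of Theorem \ref{thm:martinet} near $p$, with coordinates $(t,q,x_2,y_2,\dots,x_n,y_n)$, $\omega=t\,dt\wedge dq+\sum_{i=2}^n dx_i\wedge dy_i$ and $Z=\{t=0\}$; then $L_p=\mathbb R\,\partial_q$ and the symplectic quotient $T_pZ/L_p$ is identified with $\bigl(\operatorname{span}\{\partial_{x_i},\partial_{y_i}\},\ \sum dx_i\wedge dy_i\bigr)$. The vectors $X_{f_1}(p),\dots,X_{f_n}(p)$ are independent, lie in $T_pZ$, and are pairwise $\omega_p$-orthogonal, so their images in $T_pZ/L_p$ span an isotropic subspace. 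A dimension count now shows that $L_p\subset\operatorname{span}\{X_{f_i}(p)\}$ and that this image is a Lagrangian $(n-1)$-plane $\Lambda$: if the projection were injective on $\operatorname{span}\{X_{f_i}(p)\}$ its image would be an $n$-dimensional isotropic subspace of the $(2n-2)$-dimensional symplectic space $T_pZ/L_p$, which is impossible. Choose a Lagrangian complement $W'$ to $\Lambda$ inside $\operatorname{span}\{\partial_{x_i},\partial_{y_i}\}$ and set $\Sigma:=\{q=0\}\cap\{(x_{\ge2},y_{\ge2})\in W'\}$, an $n$-dimensional linear subspace through $p$. Then $T_p\Sigma=\mathbb R\,\partial_t\oplus W'$ is complementary to $\operatorname{span}\{X_{f_i}(p)\}=\mathbb R\,\partial_q\oplus\Lambda$, so $\Sigma$ is transverse to the $\Phi$-orbit (and automatically transverse to $Z$, since the orbit lies in $Z$), and the pullback of $\omega$ to $\Sigma$ vanishes identically: the $t\,dt\wedge dq$ term dies because $q\equiv0$ on $\Sigma$, and $\sum dx_i\wedge dy_i$ dies because $W'$ is Lagrangian.

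Finally I would flow $\Sigma$ out. Transversality makes $\Phi$ a diffeomorphism from a neighbourhood of $\{0\}\times\Sigma$ in $\mathbb R^n\times\Sigma$ onto a neighbourhood $U$ of $p$; let $q_1,\dots,q_n$ be the corresponding flow-time coordinates, and extend a defining function $t$ of $Z\cap\Sigma$ in $\Sigma$, together with functions $y_2,\dots,y_n$ completing it to a coordinate system on $\Sigma$, to $\Phi$-invariant functions on $U$. Then $(q_1,\dots,q_n,t,y_2,\dots,y_n)$ is a coordinate system on $U$ in which $\partial/\partial q_i=X_{f_i}$; the extended $t$ is a defining function of $Z$ on $U$ because $\Phi$ preserves $Z$; and each $f_j$, being $\Phi$-invariant, is a function of $(t,y_2,\dots,y_n)$ alone. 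Since $df_j$ then involves only $dt,dy_2,\dots,dy_n$, the relations $\iota_{\partial/\partial q_i}\omega=-df_i$ force the $dq_i\wedge dq_j$ part of $\omega$ to vanish and the one-$dq$ part to be $-\sum_i dq_i\wedge df_i$, so $\omega=-\sum_i dq_i\wedge df_i+C$ with $C$ a two-form in $dt,dy_2,\dots,dy_n$; closedness of $\omega$ gives $dC=0$, hence $C$ is independent of the $q_i$, hence equals its restriction to $\Sigma$, which is zero by construction. After the relabelling $q_i\mapsto-q_i$ this reads $\omega=\sum_i dq_i\wedge df_i$, and the coordinate statement is exactly what was produced.

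The genuine obstacle is the construction of the transversal $\Sigma$. Unlike the symplectic and $b$-symplectic cases, the orbit direction $\operatorname{span}\{X_{f_i}(p)\}$ here lies inside $T_pZ$ and contains the null line $L_p$, so one must build a Lagrangian-type transversal that sticks out of $Z$ at a point where $\omega$ degenerates. The point that makes this work is that maximality of isotropic subspaces forces $L_p$ into the orbit span and makes its image in $T_pZ/L_p$ Lagrangian, reducing the problem to choosing a Lagrangian complement in an honest symplectic vector space; once that is done, the Martinet normal form packages the remaining verification, and the flow-out together with the $d\omega=0$ argument are routine.
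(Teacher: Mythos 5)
Your proof is correct, and its skeleton (commuting flows of the $X_{f_i}$ producing conjugate time-coordinates $q_i$, \`a la Arnold) is the same as the paper's; the difference lies in how the formula $\omega=\sum_i dq_i\wedge df_i$ is actually closed off. The paper takes an arbitrary $n$-dimensional transversal $B$ to the orbit foliation, builds each $q_i$ as a flow-time function relative to the sub-foliation spanned by the remaining Hamiltonian fields, observes that $(f_1,\dots,f_n,q_1,\dots,q_n)$ are honest coordinates on the dense symplectic locus $U\setminus Z$ where the classical Darboux--Carath\'eodory identity holds, and then extends the identity across $Z$ by smoothness and density. You instead work directly at the degenerate point: using the Martinet normal form you show that the null line $L_p$ is forced into $\operatorname{span}\{X_{f_i}(p)\}$ (the same linear-algebra observation the paper uses in Lemma \ref{normalint}, though not in its proof of Theorem \ref{thm:Cara}), you build an explicitly \emph{isotropic} transversal $\Sigma$ from a Lagrangian complement in $T_pZ/L_p$, and you kill the residual term $C$ by combining $d\omega=0$ with $\iota_\Sigma^*\omega=0$. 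What each buys: the paper's density argument is shorter and never needs the transversal to be isotropic in the folded setting, since all the work is delegated to the symplectic locus; your argument is self-contained, makes explicit exactly where the degeneracy of $\omega$ enters, and supplies the isotropy of the transversal that the classical coefficient computation implicitly requires --- indeed without it one only gets $\omega=\sum_i dq_i\wedge df_i+\sum_{i<j}a_{ij}\,df_i\wedge df_j$, a point your construction of $\Sigma$ handles explicitly while the paper's choice of $B$ leaves it to the reader. Both routes yield the same statement, including the final observation that the $\Phi$-invariance of the $f_i$ makes them functions of $(t,y_2,\dots,y_n)$ alone.
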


\begin{rem}
 Note that the functions $f_1,...,f_n$ do not define a set of $n$ independent coordinate functions in $U$ and, thus, Equation \eqref{eq:foldDC} does not correspond to a symplectic form but a folded symplectic form.
\end{rem}

\begin{proof}
In order to construct this decomposition we first construct the folded symplectic conjugate of the function $f_1$ following the classical recipe which we can find in \cite{Arn}.

In $U$ the foliation $\mathcal{F}$ induced by the level sets of $(t,f_2,...,f_n)$ coincides with the one generated by $\mathcal{D}=\langle X_1,...,X_n \rangle$, where $X_i$ denotes the Hamiltonian vector field of $f_i$. Take $B$ a submanifold of dimension $n$ containing $p$ and transverse to this foliation. We will now construct a function $q_1$ such that $X_1=\pp{}{q_1}$ and $dq_1(X_j)=\delta_{1j}$.

For each point $m\in B$, there is a leaf $L_m \subset U$ of the foliation $\mathcal{F}$ containing $p$. Each $L_m$ is foliated by $n-1$ dimensional leaves, induced by the foliation integrating $\mathcal{D}'=\langle X_2,...,X_n\rangle$. Denote by $L_m' \subset L_m$ the $n-1$ dimensional leaf containing $M$. For some $\varepsilon$, the flow $\phi_1^t$ of $X_1$ is defined for a smaller neighborhood $U'$ for $|t|<\varepsilon$ and with its image contained in $U$. For a point $x\in L_m\cap U'$, there is some $x' \in L_m'\cap U'$ and a $t'$ such that $|t'|<\varepsilon$ and $\phi_1^{t'}(x')=x$. Define the function:
\begin{align*}
q_1:U' &\longrightarrow \mathbb{R}\\
	x &\longmapsto t'(x).
\end{align*}
We claim that $dq_1(X_j)=\delta_{ij}$. First, observe that the flow $\phi_1^t$ preserves the foliations induced by $D$ and $D'$ because of the commuting conditions given by the integrable system. This implies that $q_1$ is constant along such foliation and hence $dq_1(X_j)=0$ if $j\neq 1$. The definition of $t'$ yields $dq_1(X_1)=1$.

  We can now fix both $q_1$ and $f_1$, and apply iteratively this construction for the flow of the Hamiltonian vector field of each of the remaining functions $f_2,...,f_n$. We get smooth functions $q_2,...,q_n$ such that $dq_i(X_j)=\delta_{ij}$. At the points in $U\setminus Z$, the functions $f_1,...,f_n,q_1,...,q_n$ do form a set of coordinates. This implies that in $U\backslash Z$, where $\omega$ is symplectic, we have
$$\omega=\sum_{i=1}^n dq_i\wedge df_i.$$
But both $\omega$ and the functions $q_i,f_i$ are smooth and defined along  $U$, hence this expression extends to $U$. Extend the functions $q_i$ to a set of coordinates $(q_1,...,q_n,y_1,y_2,...,y_n)$. We can assume that $y_1=t$ is a defining function of $Z$, since the $q_i$ are coordinates along the level sets of $F$: the vector fields $\pp{}{q_i}$ are the Hamiltonian vector fields of the $f_i$. The fact that $\omega(X_i,X_j)=0$ implies that $df_i(X_j)=\pp{}{q_j}(f_i)=0$. This proves that the $f_i$ depend only on $(t,y_2,..,y_n)$.
\end{proof}
In contrast with the Darboux-Carathéodory in symplectic and $b$-symplectic geometry, one can not obtain a canonical normal form as in Martinet's theorem. This is a consequence of the fact that when you fix several commuting folded functions, various of those functions can have a Hamiltonian vector field with a non-vanishing component in the null line bundle.
\begin{rem} The classical Darboux-Carathéodory theorem considers a set of $k<n$ commuting independent functions  $f_1,...,f_k$. The same proof can be adapted in this situation and  the same theorem applies for a set of $k<n$ commuting functions which are independent $f_1,...,f_k$. We can find then a set of coordinates $(q_1,...,q_k,y_1,...,y_k,x_{k+1},y_{k+1},...,x_n,y_n)$  such that {$\omega=  \sum_{i=1}^k dq_i \wedge df_i + \sum_{i=k+1}^n dx_i \wedge dy_i$.} This form of Darboux-Carathéodory theorem is convenient for the study of non-commutative integrable systems (see for instance \cite{KM2}).
\end{rem}

\subsection{Equivariant relative Poincaré's lemma for folded symplectic forms}

We start this section with some lemmas which we will need for the proof of the action-angle theorem. They concern Relative Poincaré's lemma for folded symplectic forms and their
equivariant versions.
Recall  from \cite[page 25]{W}.
\begin{theorem}[Relative Poincar\'e lemma]
Let $N \subset M$ a closed submanifold and $i:N \hookrightarrow M$ the inclusion map. Let $\omega$ a closed $k$-form on $M$ such that $i^*\omega=0$. Then there is a $(k-1)$-form $\alpha$ on a neighborhood of $N$ in $M$ such that $\omega= d\alpha$.
\end{theorem}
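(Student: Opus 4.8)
The plan is to deduce this the same way one proves the absolute Poincar\'e lemma, namely via a homotopy operator attached to a deformation retraction, but now using a \emph{tubular neighborhood} of $N$ so that the retraction lands on $N$ rather than on a point.

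First I would fix an auxiliary Riemannian metric on $M$ and apply the tubular neighborhood theorem to identify an open neighborhood $U$ of $N$ in $M$ with an open disk subbundle of the normal bundle $\pi\colon \nu(N)\to N$, in such a way that $N$ corresponds to the zero section; write $r=i\circ\pi\colon U\to M$ for the resulting retraction onto $N$. After shrinking $U$ to a fiberwise star-shaped (e.g.\ disk) subbundle, the fiberwise scalar multiplication
$$\rho\colon [0,1]\times U\longrightarrow U,\qquad \rho(t,v)=t\cdot v,$$
is a well-defined smooth map with $\rho_1=\mathrm{id}_U$ and $\rho_0=r$.

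Next I would invoke Cartan's homotopy identity for $\rho$. Decomposing the pullback of $\omega$ to $[0,1]\times U$ as $\rho^*\omega=dt\wedge\beta_t+\gamma_t$, with $\beta_t,\gamma_t$ containing no $dt$, and setting
$$\alpha:=\int_0^1\beta_t\,dt\ \in\ \Omega^{k-1}(U),$$
the homotopy formula reads $\rho_1^*\omega-\rho_0^*\omega=d\alpha+\mathcal H(d\omega)$, where $\mathcal H$ denotes the same fiber-integration operator applied to $\rho^*d\omega$. Since the map $(t,v)\mapsto t\cdot v$ is smooth all the way down to $t=0$, the form $\alpha$ is a genuine smooth $(k-1)$-form on $U$. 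Now $d\omega=0$ makes the last term vanish, while $\rho_0^*\omega=(i\circ\pi)^*\omega=\pi^*(i^*\omega)=0$ by hypothesis; hence $\omega=d\alpha$ on the neighborhood $U$ of $N$, which is the assertion.

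The only point that really needs care is the geometric input rather than the formal one: one must choose the tubular neighborhood so that the linear homotopy $\rho_t$ does not leave it, which is exactly why we pass to a star-shaped disk subbundle, and one should note that although the time-dependent vector field generating $\rho_t$ has a $1/t$ singularity at $t=0$, the form $\alpha$ is nevertheless smooth because it is built from $\rho^*\omega$, the pullback of a smooth form along an everywhere-smooth map, so the apparent pole cancels against the contraction of $\rho^*$. Everything else is a routine manipulation of Cartan's formula, identical to the classical (absolute) Poincar\'e lemma.
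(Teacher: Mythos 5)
Your proof is correct: the tubular neighborhood, the fiberwise scaling homotopy $\rho(t,v)=t\cdot v$, the homotopy operator $\alpha=\int_0^1\beta_t\,dt$, and the observation that $\rho_0^*\omega=\pi^*i^*\omega=0$ together give $\omega=d\alpha$ exactly as claimed, and your remark on why $\alpha$ is smooth despite the apparent $1/t$ singularity is the right point to flag. The paper does not prove this lemma itself but only cites Weinstein's lecture notes, and your argument is precisely the standard one found there, so there is nothing to compare beyond noting full agreement.
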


This Relative Poincaré lemma can be used in the particular case in which the form is folded and the submanifold is a Liouville torus.

\begin{prop}\label{prop:relative}
    In a neighborhood $U(L)$ of a Liouville torus  the folded symplectic form can be written
    $$ \omega= d\alpha. $$

    If $\omega$ is invariant by a compact group action, $\alpha$ can be assumed to be invariant by the same compact group action.
\end{prop}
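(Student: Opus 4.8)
The plan is to reduce the statement to the Relative Poincaré lemma quoted above, the only nontrivial input being that $\omega$ restricts to zero on the torus. First I would observe that a folded symplectic form is by definition a genuine \emph{smooth} closed $2$-form on $M$ (it only degenerates along $Z$), so the Relative Poincaré lemma applies to it directly with $k=2$, once we fix the closed submanifold $N=L$ and check $i_L^*\omega=0$.

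The key point is therefore $i_L^*\omega=0$, i.e. that a Liouville torus is \emph{folded-Lagrangian}. By construction $L$ is a compact leaf of the foliation $\mathcal F$, which near a regular point is generated by $\mathcal D=\langle X_{f_1},\dots,X_{f_n}\rangle$ (arguing as in Lemma \ref{normalint}); at a regular point these $n$ Hamiltonian vector fields are pointwise linearly independent, hence span $T_xL$ for every $x\in L$. Since $F$ is a folded integrable system we have $\omega(X_{f_i},X_{f_j})=0$ for all $i,j$, so $\omega$ annihilates every pair of vectors tangent to $L$, that is $i_L^*\omega=0$. (If $L$ lies in $M\setminus Z$ this just says $L$ is an $n$-dimensional isotropic, hence Lagrangian, submanifold of the symplectic manifold $(M\setminus Z,\omega)$; if $L\subset Z$ the same vanishing holds verbatim, since the argument only uses the commutation relations.) The Relative Poincaré lemma then produces a $1$-form $\alpha$ on a neighborhood $U(L)$ with $\omega=d\alpha$.

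For the equivariant refinement, assume a compact Lie group $G$ acts preserving $\omega$ and the torus $L$ (in the applications $L$ is an orbit of the torus action, so this holds). Since $G$ is compact and $L$ is $G$-invariant, after shrinking we may take $U(L)$ to be $G$-invariant, so $g^*\alpha$ is a well-defined $1$-form on $U(L)$ for each $g\in G$; set
\[
  \widetilde\alpha \;=\; \int_G g^*\alpha\,\diff g
\]
with respect to a normalized Haar measure on $G$. Then $\widetilde\alpha$ is $G$-invariant, and since exterior differentiation commutes with pullback and with integration over $G$,
\[
  d\widetilde\alpha \;=\; \int_G g^*(d\alpha)\,\diff g \;=\; \int_G g^*\omega\,\diff g \;=\; \omega,
\]
the last equality using the $G$-invariance of $\omega$. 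Thus $\widetilde\alpha$ is the desired invariant primitive.

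The argument is largely routine; the one step deserving care is the identity $i_L^*\omega=0$, which hinges on the fact --- established in the proof of Lemma \ref{normalint} and in the discussion preceding this proposition --- that at a regular point the Hamiltonian vector fields of $F$ span the tangent space of the leaf through it. I do not expect any further obstacle.
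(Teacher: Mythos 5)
Your proposal is correct and follows essentially the same route as the paper: verify that the Liouville torus is isotropic (the paper does this via $\iota_{Y_i}\omega=d\sigma_i$ and $i^*d\sigma_i=0$ on the level set, which is the same computation as your $\omega(X_{f_i},X_{f_j})=0$ applied to a spanning set of $T L$), invoke the Relative Poincaré lemma, and then average the primitive over the compact group with Haar measure using that $d$ commutes with pullback and integration. No gaps.
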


\begin{proof}

Let  $i:L \hookrightarrow M$ be the natural inclusion of the Liouville torus on the folded symplectic manifold, since $i^*\omega=0$ we may apply the following relative Poincaré theorem.

Let us check  that the hypotheses of the Relative Poincaré lemma are met. The form is closed and we only need to check $i^*\omega=0$. Since every $Y_i$ is Hamiltonian with Hamiltonian function $\sigma_i$, we obtain that $\iota_{Y_i} \omega=d \sigma_i $.  And therefore the tangent space to $L$ is generated by $Y_1,...,Y_n$. However, we know that $i^*d\sigma_i=0$, since $L$ is the level set of the integrable system. This implies that $\iota_{Y_i}i^*\omega=0$ and hence $i^*\omega=0$.

Now  define the averaged $\bar \alpha $ as
    $$ \bar\alpha=\int_{G} \rho_g^*\alpha  d\mu, $$ where $\mu$ is a Haar measure and $\rho_g$ is the group action.
This 1-form is $G$-invariant, { and  as $\rho$ preserves $\omega$, we can write,}

    \begin{align*}
        \omega&= \int_{G} \rho_g^*\omega d\mu \\
              &= \int_{G} d\rho_g^*\alpha d\mu
          \end{align*}
Thus $ \omega  = d (\int_{G} \rho_g^*(\alpha) d\mu)$.
In particular this proves that the primitive $\bar\alpha$
 is invariant by the action. i.e,     for any $Y_i$ fundamental vector field of the torus action one obtains,
    $ \mathcal{L}_{Y_i}\bar\alpha=0. $ Thus finishing the proof of the proposition.
    \end{proof}

\subsection{Statement and proof of the action-angle coordinate theorem}
We proceed now with the statement and the proof of the action-angle theorem.
\begin{theorem}\label{AA}
Let $F=(f_1,...,f_n)$ be a folded integrable system on $(M,\omega)$ and $p\in Z$ a regular point in the folding hypersurface. We assume the integral manifold $\mathcal{F}_p$ containing $p$ is compact. Then there exist an open neighborhood $U$ of the torus $\mathcal{F}_p$ and a diffeomorphism
$$ (\theta_1,...,\theta_n, t, b_2,...,b_n):U\rightarrow \T^n \times B^n, $$
where $t$ is a defining function of $Z$ and such that
$$ \omega_U= \sum_{i=1}^n d\theta_i \wedge dp_i. $$
where the $p_i$ are folded functions which depend only on $(t,b_2,...,b_n)$ (and so do the $f_i$).

The $S^1$-valued functions
$$ \theta_1,...,\theta_n $$
are called angle coordinates and the $\R$-valued folded functions
$$p_1,p_2,...,p_n $$
are called { folded action coordinates}.
\end{theorem}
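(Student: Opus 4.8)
The plan is to follow the Liouville--Mineur--Arnold scheme in the spirit of \cite{LMV}, adapted to the folded setting and assembled from the tools above. First I reduce to a convenient local situation: by Lemma \ref{normalint} we may assume $f_1 = t^2/2$ near $\mathcal F_p$, with $t$ a defining function of $Z$, and by the fibration by Liouville tori established above there is a neighborhood $U$ of $\mathcal F_p$ and a diffeomorphism $\varphi\colon U \cong \mathbb T^n \times B^n$ carrying the foliation $\mathcal F$ to the trivial one, with $\mathcal F_p$ a compact orbit, hence a torus $\mathbb T^n$. This torus is Lagrangian: its tangent space is spanned by the smooth Hamiltonian vector fields $X_{f_1},\dots,X_{f_n}$ and $\omega(X_{f_i},X_{f_j}) = \{f_i,f_j\} = 0$, a bracket which is well defined for folded functions. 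The folded Darboux--Carath\'eodory theorem (Theorem \ref{thm:Cara}) gives, near each point of $\mathcal F_p$, a local decomposition $\omega = \sum_i dq_i \wedge df_i$; the $q_i$ are local conjugates of the $f_i$ but need not extend over the torus, and the point of the action-angle theorem is to produce global substitutes.

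Second, I build the torus action. The vector fields $X_{f_1},\dots,X_{f_n}$ are smooth, pairwise commuting, tangent to the fibers of $\bar F = (t,f_2,\dots,f_n)$ and independent on a neighborhood of $\mathcal F_p$ (regularity is open); since $\mathcal F_p$ is compact they are complete on a smaller saturated neighborhood, so their joint flow is a locally free $\mathbb R^n$-action whose orbits are the Liouville tori. The isotropy lattice $\Lambda_b \subset \mathbb R^n$ of the orbit over $b \in B^n$ varies smoothly with $b$, also across $Z$, because the $X_{f_i}$ stay smooth and independent there (Lemma \ref{normalint}). Choosing a smooth $\mathbb Z$-basis of $\Lambda_b$ (uniformization of periods) yields vector fields $Y_i = \sum_j c_{ij}(b)\,X_{f_j}$, with coefficients constant on fibers, generating a free $\mathbb T^n$-action on $U$. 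This action is symplectic: as in the classical argument, the basic period $1$-forms $\sum_j c_{ij}(b)\,df_j = -\iota_{Y_i}\omega$ on the base are closed because the fibers are Lagrangian, so $\mathcal L_{Y_i}\omega = 0$.

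Third, I produce the action variables. Since the fibers are Lagrangian and the $\mathbb T^n$-action preserves $\omega$, the equivariant relative Poincar\'e lemma (Proposition \ref{prop:relative}) gives $\omega = d\alpha$ near $\mathcal F_p$ with $\alpha$ $\mathbb T^n$-invariant. Set $p_i := \alpha(Y_i)$. By Cartan's formula and invariance, $0 = \mathcal L_{Y_i}\alpha = d(\alpha(Y_i)) + \iota_{Y_i}\omega$, so $\iota_{Y_i}\omega = -dp_i$, i.e. $Y_i = X_{p_i}$. Each $p_i$ is $\mathbb T^n$-invariant, hence a function of $(t,b_2,\dots,b_n)$ (well defined up to a constant), and since its Hamiltonian vector field $Y_i$ is a genuine smooth vector field, Lemma \ref{lem:foldfunc} gives $dp_i|_Z(v) = 0$ for every $v \in V$: the $p_i$ are folded functions. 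One may moreover normalize $p_1 = f_1 = t^2/2$, so $dp_1 = t\,dt$ vanishes along $Z$ --- which is exactly why the $p_i$ are ``folded action coordinates'' rather than honest coordinates.

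Finally, I build the angle variables and reach the normal form. The free $\mathbb T^n$-action gives angle functions $\theta_1,\dots,\theta_n$ once a section $\Sigma$ of the fibration through a point of $\mathcal F_p$ is chosen, and $(\theta_1,\dots,\theta_n,t,b_2,\dots,b_n)\colon U \to \mathbb T^n \times B^n$ is then a diffeomorphism after shrinking $U$. In these coordinates $\partial_{\theta_i} = Y_i = X_{p_i}$, so $\iota_{\partial_{\theta_i}}\omega = -dp_i$, and since the fibers are Lagrangian $\omega$ has no $d\theta_i\wedge d\theta_j$ part; hence $\omega = \sum_i d\theta_i\wedge dp_i + \pi^*\bar C$ (after possibly changing the signs of the $p_i$) with $\bar C$ a closed $2$-form on $B^n$. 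It remains to kill $\bar C$, equivalently to choose $\Sigma$ Lagrangian --- then $\bar C = s^*\omega = 0$ for the section $s$, and using $\Sigma$ as the zero section gives $\omega = \sum_i d\theta_i\wedge dp_i$ with the $p_i$ folded functions of $(t,b_2,\dots,b_n)$, as claimed. \emph{This last step is the main obstacle.} In the symplectic case a Lagrangian section exists because $B^n$ is contractible; here the same works away from $Z$, but near $Z$ it is delicate precisely because $(p_1,\dots,p_n)$ fail to be coordinates: solving for the section forces one to invert $\partial p/\partial b$, whose determinant vanishes transversally along $Z$, so one must verify that the obstruction $\bar C$ itself inherits the right degeneracy along $Z$ so that division by $t$ leaves everything smooth. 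I expect to resolve this either by a direct local computation exploiting $p_1 = t^2/2$, or by first pre-normalizing $\omega$ near $Z$ via the folded Moser theorem (Theorem \ref{thm:moser}) as $p^*i^*\omega + d(t^2 p^*\alpha)$, which reduces matters to the ordinary symplectic Arnold--Liouville normal form for $i^*\omega$ on the $(2n-2)$-dimensional leaf inside $Z$ together with the standard folded summand.
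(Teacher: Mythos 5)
Your construction of the torus action, the use of the equivariant relative Poincar\'e lemma, and the definition of the action functions $p_i = \iota_{Y_i}\bar\alpha$ all match the paper's proof. But the proposal has a genuine gap exactly where you flag it: the final step establishing $\omega = \sum_i d\theta_i\wedge dp_i$ is not carried out, and neither of your two proposed escape routes works as stated. The normalization ``$p_1 = f_1 = t^2/2$'' is false in general: the action variables are determined by a basis $Y_i=\sum_j\lambda_i^j X_{f_j}$ of the period lattice, and the flow of $X_{t^2/2}$ alone (which spans the null direction on $Z$) need not have closed orbits --- the paper's example in Section 5.4 (the irrational mapping torus of $S^2$) shows that \emph{none} of the $p_i$ can be taken of the form $t^2$. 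For the same reason, your second fallback --- pre-normalizing via the folded Moser theorem and reducing to Arnold--Liouville on a leaf of $Z$ --- implicitly assumes the null foliation is fibrating over the leafwise tori, which is precisely the cotangent-model situation the paper proves does \emph{not} hold in general.

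The paper closes this step without any Lagrangian-section or division-by-$t$ argument, by applying the folded Darboux--Carath\'eodory theorem (Theorem \ref{thm:Cara}) to the action functions $\sigma_i$ themselves: the conjugate functions $q_i$ are built as flow times from a transverse $n$-dimensional submanifold, so $dq_i(X_{\sigma_j})=\delta_{ij}$ by construction; the identity $\omega=\sum_i d\sigma_i\wedge dq_i$ holds on $U\setminus Z$, where $\omega$ is honestly symplectic, and extends to $Z$ by continuity of both sides; the $q_i$ are genuine angle coordinates because the $X_{\sigma_i}$ have period one; and the remaining identity $\omega(X_{\theta_i},X_{\theta_j})=0$ is propagated from the Darboux--Carath\'eodory chart to the whole saturated neighborhood by pushing along the joint flow of the $X_{\sigma_k}$, using closedness of $\omega$. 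I would suggest replacing your last paragraph by this argument: the transversal in Darboux--Carath\'eodory plays the role of your section $\Sigma$, and the continuity-from-$U\setminus Z$ device is what makes the degeneracy along $Z$ harmless.
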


\begin{rem}
Comparing this theorem with the analog in  \cite{KMS} observe that unlike the $b$-symplectic case, the expression of $\omega$ in a neighborhood of the Liouville torus is not in a folded Darboux-type form.
\end{rem}
Besides the lemmas in the former subsection we will need the following technical lemma.

In \cite{LMV} (see Claim 2 in page 1856) it is shown that given a complete vector field $Y$ of period $1$ and a bivector field $P$ such that $\mathcal{L}_{Y}\mathcal{L}_{Y}P=0$ then $\mathcal{L}_{Y}P=0$. If instead of a bivector field we take a $2$-form, the proof can be easily adapted as follows.
\begin{lemma}\label{lmv}
	If $Y$ is a complete vector field of period $1$ and $\omega$ is a $2$-form such that $\mathcal{L}_{Y}\mathcal{L}_{Y}\omega=0$ then $\mathcal{L}_{Y}\omega=0$.
\end{lemma}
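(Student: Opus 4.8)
The plan is to mimic the proof of Claim 2 in \cite{LMV}, which handles the bivector-field case, and check that only formal properties of the Lie derivative are used so that the argument transfers verbatim to a $2$-form. Write $\mathcal{L} := \mathcal{L}_Y$ for brevity. The hypothesis is $\mathcal{L}^2\omega = 0$, so $\mathcal{L}\omega$ is itself $\mathcal{L}$-invariant. Since $Y$ is complete with period $1$, we may average over the circle: for each $t$ the time-$t$ flow $\phi_t$ of $Y$ is defined, $\phi_0 = \phi_1 = \mathrm{id}$, and $\frac{d}{dt}\phi_t^*\omega = \phi_t^*\mathcal{L}\omega$. Integrating in $t$ from $0$ to $1$ gives $\phi_1^*\omega - \phi_0^*\omega = \int_0^1 \phi_t^*\mathcal{L}\omega\, dt$, and the left-hand side vanishes, so $\int_0^1 \phi_t^*(\mathcal{L}\omega)\, dt = 0$.

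Next I would use the fact that $\mathcal{L}\omega$ is $\mathcal{L}$-invariant, i.e. $\phi_t^*(\mathcal{L}\omega) = \mathcal{L}\omega$ for all $t$ (this is the standard equivalence: $\mathcal{L}_Y\eta = 0 \iff \phi_t^*\eta = \eta$, applied to the $2$-form $\eta = \mathcal{L}\omega$, which holds because $\frac{d}{dt}\phi_t^*\eta = \phi_t^*\mathcal{L}_Y\eta = 0$). Substituting this into the integral gives $\int_0^1 \mathcal{L}\omega\, dt = \mathcal{L}\omega = 0$, which is the desired conclusion. So the whole argument is: differentiate $\phi_t^*\omega$, integrate over the period, observe the boundary terms cancel, and use invariance of $\mathcal{L}\omega$ to pull it out of the integral.

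The only point that needs care — and the "main obstacle," though it is mild — is justifying $\phi_t^*(\mathcal{L}\omega) = \mathcal{L}\omega$ from $\mathcal{L}\mathcal{L}\omega = 0$; this is exactly where completeness and the flow being globally defined for all $t$ are used, and it is the step that in \cite{LMV} is phrased for bivectors. Since $\mathcal{L}_Y$ acts on differential forms exactly as it does on multivector fields with respect to all the properties invoked (naturality under pullback by the flow, the formula $\frac{d}{dt}\phi_t^* = \phi_t^*\circ\mathcal{L}_Y$), the adaptation is immediate and I would simply remark that "the proof of \cite[Claim 2, p.~1856]{LMV} applies verbatim, replacing the bivector field $P$ by the $2$-form $\omega$ throughout." One could alternatively phrase it without explicit integration: $t \mapsto \phi_t^*\omega$ is an affine function of $t$ (its second derivative $\phi_t^*\mathcal{L}^2\omega$ vanishes) which is periodic with period $1$, hence constant, hence its derivative $\mathcal{L}\omega$ at $t = 0$ is zero.
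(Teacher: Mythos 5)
Your proposal is correct and follows essentially the same route as the paper: differentiate $\phi_t^*\omega$, use $\mathcal{L}_Y\mathcal{L}_Y\omega=0$ to conclude that $\mathcal{L}_Y\omega$ is flow-invariant, integrate to get $\phi_t^*\omega=\omega+t\,\mathcal{L}_Y\omega$, and evaluate at $t=1$ where the flow is the identity. The paper writes this pointwise at $\phi_{-t}(p)$ while you phrase it via pullbacks and averaging, but the argument is the same.
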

\begin{proof}
	Denote $v=\mathcal{L}_{Y}\omega$. Denote $\phi_t$ the flow of $Y$. For any point $p$ we can write
	\begin{align*}
		\frac{d}{dt}\left(  \phi_t^*\omega_{\phi_{-t}(p)} \right)&= {(\phi_t)}^* ( \mathcal{L}_Y{\omega_{{\phi}_{-t}(p)})} \\
		&= {\phi_t}^* v_{{\phi}_{-t}(p)} \\
		&= v_p
	\end{align*}
	In the last equality we used that $\mathcal{L}_Yv=0$. Integrating we obtain
	$$ (\phi_t)^*\omega_{\phi_{-t}(p)}= \omega_p + tv_p. $$
	At time $t=1$ the flow is the identity because $Y$ has period 1 and hence $v_p=0$.
\end{proof}

We now proceed to the action-angle theorem proof.
\begin{proof}
 The vector fields $X_{f_1},...,X_{f_n}$ define a torus action on each Liouville torus $\mathbb{T}^n \times \{b\}_{b\in B^n}$. We would like an action defined in a neighborhood of the type $\T^n \times B^n$. For the first part of the proof we follow the proofs in \cite{LMV} and \cite{KMS} and construct a toric action. For this we consider the classical action of the joint-flow (which is an $\mathbb{R}^n$-action) and prove uniformization of periods to induce a $\mathbb{T^n}$-action.

We denote by $\varphi_{i}^{t}$ the time-$t$-flow of the Hamiltonian vector fields $X_{f_i}$. Consider the joint flow of these Hamiltonian vector fields.
\begin{align*}
\varphi :\mathbb{R}^n \times ( \mathbb{T}^n \times B^n ) &\longrightarrow \mathbb{T}^n \times B^n \\
 \big((t_1,\ldots,t_n),(x, y)\big) &\longmapsto \varphi_1^{t_1}\circ\dots\circ \varphi_n^{t_n}(x, y).
\end{align*}

The vector fields $X_{f_i}$ are complete and commute with one another so this defines an $\R^n$-action on $\T^n \times B^n$. When restricted to a single orbit $\T^n \times \{b\}$ for some $b \in B^n$, the kernel of this action is a discrete subgroup of $\R^n$, a lattice $\Lambda_{b}$. We call  $\Lambda_{b}$ the period lattice of the orbit. The rank of $\Lambda_b$ is $n$ because the orbit is assumed to be compact. \\

The lattice $\Lambda_b$ will in general depend on $b$. The idea of uniformization of periods is to modify the action to get constant isotropy groups such that $\Lambda_b = \mathbb{Z}^n$ for all $b$. For any $b \in B^{n-1}\times\{0\}$ and any $a_i \in \mathbb{R}$ the vector field $\sum a_i X_{f_i}$ on $\mathbb{T}_n \times \{b\}$ is the Hamiltonian vector field of the function
$$\sum_{i=1}^{n} a_i f_i.$$

To perform the uniformization we pick smooth functions
$$(\lambda_1, \lambda_2, \dots, \lambda_n): B^n \rightarrow \mathbb{R}^n$$
such that { $(\lambda_1(b), \lambda_2(b), \dots, \lambda_n(b))$ is a basis for the period lattice $\Lambda_b$ for all $b \in B^n$.}
 Such functions $\lambda_i$ exist such that they satisfy this condition (perhaps after shrinking $B^n$) by the implicit function theorem, using the fact that the Jacobian of the equation $\Phi(\lambda, m) = m$ is regular with respect to the $s$ variables.

We define a uniformized flow using the functions $\lambda_i$ as
\begin{align*}
\tilde \Phi :\R^n \times ( \T^n \times B^n ) &\to  \T^n \times B^n \\
 \big((s_1,\ldots,s_n),(x, b)\big) &\mapsto \Phi \left(\sum_{i=1}^n s_i \lambda_i(b), (x,b) \right).
\end{align*}
The period lattice of this $\R^n$ action is  $\mathbb{Z}^n$, and  therefore constant hence the initial action  clearly descends to  the quotient to define a new action of the group  $\mathbb{T}^n$.

We want to find now functions $\sigma_1,...,\sigma_n$ such that their Hamiltonian vector fields are precisely the ones constructed above $Y_i= \sum_{j=1}^n \lambda_i^j X_{f_j}$.
We compute the Lie derivative of the vector fields $Y_i$ using Cartan's formula:
\begin{align*}
    \mathcal{L}_{Y_i} \omega&= d\iota_{Y_i}\omega + \iota_{Y_i}d\omega \\
    &=d(- \sum_{j=1}^n \lambda_i^j df_j )  \\
    &= -\sum_{j=1}^n d\lambda_i^j \wedge df_j
\end{align*}
 We deduce that
$$\mathcal{L}_{Y_i}\mathcal{L}_{Y_i} \omega= \mathcal{L}_{Y_i}(-\sum_{j=1}^n d\lambda_i^j \wedge df_j) .$$
In the last equality we have used the fact that $\lambda_i^j$ are constant on the level sets of $F$.
Lemma \ref{lmv} applied to the vector fields $Y_i$ yields $\mathcal{L}_{Y_i}\omega=0$ and the folded-symplectic structure is preserved.

The next step is to prove that the collection of $1$-forms $\iota_{Y_i}\omega$  are exact in the neighbourhood of a Liouville torus.  So the new action is indeed Hamiltonian.
We apply proposition \ref{prop:relative} in a neighbourhood of a Liouville torus and the symplectic form $\omega$ can be written as
$\omega=d\alpha$.
Now since $\mathcal{L}_{Y_i}\omega=0$, consider the toric action generated by the vector fields $Y_i$. Applying the equivariant version of Proposition \ref{prop:relative} with the group $G=\mathbb T^n$ the form $\omega$ is $G$-invariant and we can find a new $\bar{\alpha}$ which is at the same time a primitive for the folded symplectic structure $\omega$ and $\mathbb T^n$-invariant.

%
%We have now an $\bar\alpha$ such that locally in $U(L)$ the folded symplectic form is written $\omega=d\bar\alpha$ and $\bar\alpha$ is invariant by the action. This means that $\mathcal{L}_{Y_i}\alpha=0$ for all fundamental vector field of the action.

 Cartan's formula yields:
\begin{align*}
    \iota_{Y_i}\omega &= -\iota_{Y_i}d\bar\alpha \\
                &= -d\iota_{Y_i}\bar\alpha.
\end{align*}
Thus we deduce that the fundamental vector fields $Y_i$ are indeed Hamiltonian with Hamiltonian folded functions $\iota_{Y_i}\bar\alpha$. Denoting by $\sigma_1,...,\sigma_n$ these Hamiltonian functions, they are now the natural candidates for  ``action" coordinates. { Each of these functions defines a smooth Hamiltonian vector field, so by definition they are all folded functions.

We are under the hypotheses of Theorem \ref{thm:Cara} (Darboux-Carath\'eodory theorem), so we can find a coordinate system
$$ (t,y_2,...,y_n,q_1,...,q_n) $$
and some folded functions $\sigma_i$ such that
$$ { \omega= \sum_{i=1}^n d\sigma_i \wedge dq_i, }$$
and the $\sigma_i$ depend only on $(t,...,y_n)$.}
{ The functions $\sigma_i$ were defined using an equivariant form $\alpha_i$ which is defined in a neighborhood of the whole regular fiber. Hence the $\sigma_i$ extend to all $U'=\sigma^{-1}(\sigma(U))$.}.  For the sake of simplicity we denote these extensions using the same notation. The Hamiltonian vector fields of $\sigma_i$ have period one, so the functions $q_i$ can be viewed as angle variables $\theta_i$. { It remains to check that, in  the extended functions,  $\omega$  can be written in the desired \emph{Darboux-type} form}.

Observe that  $\omega(\pp{}{\sigma_i},\pp{}{\theta_i})=\delta_{ij}$ in $U'$ by the own definition of $\theta_i$. By abuse of notation we denote by $X_{\theta_i}$ the vector fields which solve the equations: $\iota_{ X_{\theta_i}}\omega=-d\theta_i$. { By construction, the equality $\omega(Y_i,Y_j)=0$ holds in $U'$. This follows from the fact that $\omega$ is symplectic away from $Z$, and since $[Y_i,Y_j]=0$ we get that $\omega(Y_i,Y_j)=0$ in $ U'\setminus Z$ and hence the equality extends to all $U'$.}

We know, by the Darboux-Carath\'eodory coordinates, that $\omega(\pp{}{\sigma_i}, \pp{}{\sigma_j})= \omega(X_{\theta_i},X_{\theta_j})=0$ in the neighborhood $U$ of the regular point. Applying the definition of exterior derivative, using that $\omega$ is closed and that the vector fields commute we obtain:
\begin{align*}
	d\omega (X_{\theta_i}, X_{\theta_j}, X_{\sigma_k}) &= X_{\theta_i}(\omega(X_{\theta_j},X_{\sigma_k})) - X_{\theta_j}(\omega(X_{\theta_i},X_{\sigma_k})) \\
	&+ X_{\sigma_k}(\omega(X_{\theta_i},X_{\theta_j})) \\
	&=0
\end{align*}
Using that $\omega(X_{\sigma_i},X_{\theta_j})=\delta_{ij}$ for all $i$ and $j$, we obtain
$$  X_{\sigma_k}(\omega(X_{\theta_i},X_{\theta_j}))=0.  $$
 In particular, by using the joint flow  of the vector fields $X_{\sigma_k}$ we prove that the relation { $\omega(\pp{}{\sigma_i}, \pp{}{\sigma_j})=\{\theta_i,\theta_j\}=0 $} holds in the whole neighborhood $U'$.
We conclude that $\omega$ has the desired form
$$ \omega =\sum_{i=1}^n d\sigma_i \wedge d\theta_i. $$
{ We consider the change $p_i:=-\sigma_i$ so that we can write  $\omega$ in the form}
$$ \omega= \sum_{i=1}^n d\theta_i \wedge dp_i. $$
Taking some coordinates $(t,b_2,...,b_n)$ in $B^n$, it is clear that the functions $(t,b_2,...,b_n, \theta_1,...,\theta_n)$ form a coordinate system and the $f_i$ only depend on $(t,b_2,..,b_n)$. This concludes the proof.
\end{proof}

\subsection{Desingularization and equivalence with cotangent models}\label{sec:yescotangent}

In \cite{KM} several examples of $b$-integrable systems are provided using the $b$-cotangent lift. In fact, the construction can be generalized to the context of $b^m$-symplectic manifolds.   From the definition of cotangent lift and the results in \cite{KM} which we recalled in subsection \ref{ssec:bcotangentlift}
we obtain:
\begin{prop}
The twisted $b$-cotangent lift of the action of an abelian group $G$ of rank $n$ on $M^{2n}$ yields a $b$-Hamiltonian action in $T^*M$. If the action is free or locally free, the twisted cotangent lift yields a $b$-integrable system.
\end{prop}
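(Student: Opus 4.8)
The plan is to mimic the classical construction of the cotangent lift of a torus action, keeping track of the twisted $b$-symplectic form on the fibers. First I would recall that for an abelian group $G$ of rank $n$ acting on $M^{2n}$ (say $G$ acts with an open orbit, or at least locally freely on an open set), the cotangent lift $\hat\phi$ to $T^*M$ is Hamiltonian for the \emph{canonical} symplectic structure, with moment map $\mu_{can}(p) = $ the covector $X \mapsto \langle p, X^\#|_{\pi(p)}\rangle$. Near a point where the action is locally free we may choose coordinates on $M$ adapted to the $G$-orbits, so that $G$ acts by translations in angle coordinates $\theta=(\theta_1,\dots,\theta_n)$; this identifies a neighborhood with an open set of $T^*\mathbb T^n$ with coordinates $(\theta,a)$, and the lifted action is exactly the model discussed in Subsection \ref{ssec:bcotangentlift}. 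Then I would replace the canonical Liouville form $\lambda = \sum a_i\, d\theta_i$ by the twisted form $\lambda_{tw,c} = c\log|a_1|\, d\theta_1 + \sum_{i=2}^n a_i\, d\theta_i$, whose differential is the twisted $b$-symplectic form $\omega_{tw,c}$ of Definition \ref{def:twistedbform}; since $\lambda_{tw,c}$ differs from $\lambda$ only in the $\theta_1$-component and the $G$-action is by translation in $\theta$, it is still $G$-invariant, so by Cartan's formula $\mathcal L_{X^\#}\omega_{tw,c} = d\,\iota_{X^\#}\omega_{tw,c}$ and the action is $b$-Hamiltonian with moment map $\langle \mu_{tw,c}(p),X\rangle = \langle (\lambda_{tw,c})_p, X^\#|_p\rangle$. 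Concretely $\mu_{tw,c} = (c\log|a_1|, a_2,\dots,a_n)$, a tuple of commuting $b$-functions.

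Next I would check the commuting and independence conditions. The components of $\mu_{tw,c}$ Poisson-commute because they are (pullbacks of) the standard momentum components of the torus translation action, and the Poisson bracket computed with $\omega_{tw,c}$ agrees with the one computed with $\omega_{can}$ away from $Z=\{a_1=0\}$ hence everywhere by continuity (this is exactly the argument used repeatedly in the excerpt, e.g.\ in the proof of Theorem \ref{AA} to extend $\omega(Y_i,Y_j)=0$ from $U'\setminus Z$ to $U'$). Independence: $d(c\log|a_1|)\wedge da_2\wedge\cdots\wedge da_n = \frac{c}{a_1} da_1\wedge da_2\wedge\cdots\wedge da_n$ is a nonvanishing section of $\wedge^n({}^bT^*M)$ everywhere (including on $Z$, which is precisely the point of using $b$-forms), so the system is $b$-integrable in the sense of Definition \ref{def:pbintegrable}; the regular points are those where $X_{f_1},\dots,X_{f_n}$ are linearly independent as smooth vector fields, which holds on the open dense set where the lifted $G$-action is locally free. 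When the original $G$-action on $M$ is free or locally free, the lifted action is free/locally free on a neighborhood of the zero section and in particular on a dense set of $Z$, so the regular set is dense in both $M$ and $Z$, as required.

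The main obstacle I anticipate is the coordinate reduction in the first paragraph: making precise the passage from "$G$ abelian of rank $n$ acting on $M^{2n}$, the lift to $T^*M$" to the local normal form $T^*\mathbb T^n$ with the twisted form on the fibers, in a way that is consistent across the locus where the action degenerates. For a globally free action $M$ is a principal $G$-bundle and one picks a connection; for a locally free action one works equivariantly near an orbit using the slice theorem for the (finite) isotropy. The subtlety is that the twisted form lives on the $b$-cotangent bundle ${}^bT^*M$ relative to a chosen hypersurface $Z\subset T^*M$ — one must fix $Z$ to be $G$-invariant (e.g.\ $Z=\{a_1=0\}$ in the adapted chart, which is the conormal-type hypersurface cut out by one of the momentum components) so that the construction is well-defined independently of the chart and the resulting $c$ is the genuine modular period. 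Once this setup is in place, the verification that the action is $b$-Hamiltonian and that the moment map components form a $b$-integrable system is routine, following verbatim the $b$-cotangent-lift computations of \cite{KM} recalled in Subsection \ref{ssec:bcotangentlift}.
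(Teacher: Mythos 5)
The paper gives no proof of this proposition at all --- it is stated as an immediate consequence of the definitions and of the results of \cite{KM} recalled in Subsection \ref{ssec:bcotangentlift} --- so your write-up supplies the argument the paper leaves implicit, and it follows exactly the route of \cite{KM}: reduce near a locally free orbit to the translation action on $T^*\mathbb T^n$, observe that $\lambda_{tw,c}$ is invariant, and read off the moment map $(c\log|a_1|,a_2,\dots,a_n)$. Two points deserve correction. First, your justification of commutativity is wrong as stated: the Poisson brackets of $\omega_{tw,c}$ and $\omega_{can}$ do \emph{not} agree away from $Z$, since the two forms differ by the factor $c/a_1$ in the $(\theta_1,a_1)$-block at every point, not only on $Z$ (e.g.\ $\{\theta_1,a_1\}_{tw}=\pm a_1/c$ versus $\pm 1$). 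The conclusion is nevertheless immediate for a better reason: all the moment components depend only on the $a$-variables and their $\omega_{tw,c}$-Hamiltonian vector fields are constant combinations of the $\partial/\partial\theta_i$, which annihilate any function of $a$. Second, the dimension bookkeeping: for the $n$ moment components to constitute a $b$-integrable system on $T^*M$ one needs $\dim M=n$, so that $T^*M$ has dimension $2n$ and a locally free orbit gives exactly the $T^*\mathbb T^n$ model you use; your reduction to angle coordinates $(\theta_1,\dots,\theta_n)$ filling out a neighborhood in $M$ tacitly assumes this, and you should say so explicitly (the superscript in the statement is best read as the dimension of the total space $T^*M$). Your remark about fixing a $G$-invariant hypersurface $Z=\{a_1=0\}$ so that the $b$-structure and the constant $c$ are well defined is a genuine and correctly handled subtlety. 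With these repairs the proof is complete and consistent with what the paper intends.
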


In \cite{convexitygmw} the desingularization of torus actions was explored in detail. As  a consequence of theorem 6.1 in \cite{convexitygmw} where an equivariant desingularization procedure is established for effective torus actions, we obtain the following \emph{desingularized models}.

\begin{prop}
The equivariant desingularization takes
the twisted $b$-cotangent lift of an action of a torus $\mathbb T^n$ on $M$
to a twisted folded cotangent lift model.

\end{prop}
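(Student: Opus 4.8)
The plan is to verify that the equivariant desingularization procedure of Theorem 6.1 in \cite{convexitygmw}, when applied to the explicit twisted $b$-cotangent lift model of Definition \ref{def:twistedbform}, produces exactly the twisted folded cotangent lift model described in Subsection \ref{fcotangent}. Since both models live on $T^*\mathbb{T}^n$ with the standard lifted $\mathbb{T}^n$-action by translations, the content of the proposition is local and essentially a normal-form computation: one must check that the $b$-symplectic form $\omega_{tw,c} = \frac{c}{a_1}\,d\theta_1\wedge da_1 + \sum_{i=2}^n d\theta_i\wedge da_i$ gets sent by the desingularization to (a form equivalent to) the folded symplectic form $\omega_{tw,f} = p_1\,d\theta_1\wedge dp_1 + \sum_{i=2}^n d\theta_i\wedge dp_i$, and that the lifted torus action is preserved throughout, so that the moment map $(a_1^2, a_2,\dots,a_n)$-type data is carried to the folded moment map $(p_1^2, p_2,\dots,p_n)$.

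First I would recall the desingularization recipe from Theorem \ref{thm:desing} / \cite{Deblog} applied in this coordinate chart: replacing the singular term $\frac{da_1}{a_1}$ by $d\big(f_\epsilon(a_1)\big)$ for an odd function $f_\epsilon$ that agrees with $\log|a_1|$ away from an $\epsilon$-neighborhood of $Z=\{a_1=0\}$ and whose derivative $f_\epsilon'(a_1)$ vanishes transversally at $a_1=0$ (the canonical choice being $f_\epsilon'(a_1) = a_1/\epsilon^2$ near $0$, up to rescaling). Then $\omega_\epsilon = c\, d\theta_1 \wedge d\!\left(f_\epsilon(a_1)\right) + \sum_{i=2}^n d\theta_i \wedge da_i$ is folded symplectic, with folding hypersurface $Z=\{a_1=0\}$, and the lifted $\mathbb{T}^n$-action is untouched since $f_\epsilon$ depends only on the fiber coordinate $a_1$. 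Second, I would apply the equivariant statement of \cite[Theorem 6.1]{convexitygmw}: the desingularization can be carried out $\mathbb{T}^n$-equivariantly, so $\omega_\epsilon$ is a folded symplectic form for which the lifted torus action remains (folded-)Hamiltonian. Third, I would exhibit the change of fiber coordinate $p_1 = p_1(a_1)$ (a diffeomorphism on each side of $Z$, smooth across $Z$, odd) realizing $f_\epsilon(a_1)$ as the primitive $p_1^2/2$ up to an additive constant — concretely, near $Z$ one has $f_\epsilon(a_1) = \mathrm{const} + a_1^2/(2\epsilon^2)$, so $p_1 := a_1/\epsilon$ (or the global odd extension matching the outer region) gives $c\,d(f_\epsilon(a_1)) = c\, p_1\, dp_1$ near $Z$, which after absorbing $c$ into the normalization is the twisted folded Liouville form's differential. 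This identifies $(\omega_\epsilon, \text{lifted action})$ with $(\omega_{tw,f}, \text{lifted action})$ as folded Hamiltonian $\mathbb{T}^n$-spaces, and correspondingly sends the moment map with $a_1$-component (playing the role of $\log|a_1|$ desingularized, paired with the modular direction) to the folded moment map with first component $p_1^2$.

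The main obstacle is the bookkeeping of the equivariant desingularization when the action is only locally free, together with making precise the sense of "takes ... to ...": I would state it as an equivalence of folded Hamiltonian torus spaces, i.e. a $\mathbb{T}^n$-equivariant diffeomorphism intertwining the respective folded symplectic forms and moment maps, rather than literal equality of forms on the nose (the constant $c$, the modular period, survives as the rescaling parameter). One must also confirm that the outer region — where $\omega_{tw,c}$ is already honestly symplectic and untouched by desingularization — glues consistently with the inner folded normal form; this follows from the fact that $f_\epsilon$ is chosen to be an orientation-preserving reparametrization of $a_1$ interpolating between the two regions, so the coordinate change $p_1 = p_1(a_1)$ extends globally. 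Once these identifications are in place the proposition follows directly from \cite[Theorem 6.1]{convexitygmw} combined with Theorem \ref{thm:desing} and the explicit coordinate descriptions recalled above; no further computation beyond the chain-rule identity $d(f_\epsilon(a_1)) = f_\epsilon'(a_1)\,da_1 = p_1\,dp_1$ (near $Z$) is required.
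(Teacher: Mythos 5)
Your proposal is correct and follows essentially the same route as the paper: invoke the equivariant desingularization of \cite{convexitygmw}, observe that the lifted $\mathbb{T}^n$-action is untouched because the desingularization only modifies the dependence on the fiber coordinate $a_1$, and replace the singular moment-map component $c\log|a_1|$ by (a reparametrization of) $c\,p_1^2/2$, recovering the twisted folded cotangent model. The paper's own proof is just a three-line sketch of exactly this; your version fills in the coordinate change $p_1=p_1(a_1)$ and the gluing with the outer symplectic region, which the paper leaves implicit.
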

\begin{rem}In \cite{arnaueva2} explicit desingularization formulae are given for action-angle coordinates of desingularized systems. These are  convenient for the refinement of KAM theory for singular symplectic manifolds.
\end{rem}
\begin{proof}
Denote $t$ the defining function of the critical hypersurface $Z$. The moment map of the action in the $S^1$ coordinate is a function of the form $f= c \log (\vert p\vert )$ for some constant $c$, where $p$ denotes the momentum coordinate in $T^*S^1$. Its Hamiltonian vector field is $X_f= c \pp{}{\theta}$.
Take $f'=\frac{cp^2}{2}$ as new momentum map component for the folded symplectic structure in $T^*M$.
\end{proof}

 This construction provides a machinery to produce examples of folded integrable systems via desingularization of $b$-integrable systems which are given by toric actions.
 However, we know that not all integrable system on a folded symplectic manifold comes from desingularization. Indeed a folded integrable system may not accept a folded cotangent model as we show in the next section.

\subsection{About equivalence with cotangent models: a case study}\label{sec:nocotangent}
For integrable systems in symplectic and $b$-symplectic geometry, the action-angle coordinates yields has a Darboux-type expression for the associated geometrical structure in a semi-local neighborhood. This does not always apply in the case of folded integrable systems, since we do not obtain an expression as in Martinet theorem:
$$ \omega=tdt\wedge dq + \sum_{i=2}^n dx_i\wedge dy_i $$
for some coordinates $(t,q,x_1,y_1,...,x_n,y_n)$. In general, the previous theorem cannot be simplified to obtain such an expression.

Indeed, assume that we can find  some action-angle type coordinates of the form, say, $\omega=tdt\wedge d\theta_1 + \sum_{i=2}^n dp_i \wedge d\theta_i$ for some $\mathbb{R}$-valued coordinates $p_i$ and $S^1$-valued coordinates $\theta_i$. In these coordinates, the null line line bundle $\ker i^*\omega|_Z$ is generated by $\langle \pp{}{\theta_1} \rangle$. This would imply that the null line bundle is a fibration by circles near the regular torus but we know this is not the case in general. We will now show an example of folded integrable system on a folded symplectic manifold whose null bundle has only two closed orbits.

\begin{ex}
 Take the mapping torus of $S^2$ by a smooth irrational rotation $\phi$, which is a symplectomorphism of $S^2$ equipped with the symplectic form $dh\wedge d\varphi$ which has only two periodic points. We get $S^2\times S^1$, and a cosymplectic manifold $(\alpha,\tilde \omega)$ where the form $\tilde \omega$ is obtained by gluing $dh\wedge d\varphi$ with $\phi^*(dh\wedge d\theta)$. It satisfies that $\ker \tilde \omega$ is a suspended vector field, which is an irrational rotation on each tori given by $h=c$ where $h$ is the height function on $S^2$.

By multiplying by $S^1$, we get $S^2\times S^1 \times S^1$, which can be endowed with the folded symplectic form
$$ \omega= { \sin \theta} d\theta \wedge \alpha + \tilde \omega. $$
The critical hypersurface is given by two copies of $S^2\times S^1$, at $\theta=0,\pi$, where $\ker i^*\omega|_Z=\ker \tilde \omega$.

The pair $(f_1,f_2)=({ \cos\theta}, h)$ defines a folded integrable system in $(S^2\times S^1\times S^1, \omega)$. Indeed, we have { $df_1\wedge df_2=-\sin\theta d\theta \wedge dh$}, which is non-vanishing in a dense set of $M$ and $Z$ as a section of $\Lambda^2(T_V^*M)$. The null line bundle of of $\omega$ is $\ker \tilde \omega|_Z$, which generates a vector field which has only two closed orbits at $h=1$ and $h=0$. We deduce that this folded integrable system does not admit a cotangent model. { In particular, in the normal form obtained in Theorem \ref{AA}, none of the functions $p_i$ is of the form $p_i=t^2$ for some defining function $t$ of $Z$.}
\end{ex}

This proves the following proposition.
\begin{prop}
Folded integrable systems do not admit, in general, cotangent models near a regular point.
\end{prop}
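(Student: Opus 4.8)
The plan is to deduce the Proposition directly from the Example, organizing the argument into two steps: first, extract from the shape of a cotangent model the rigidity it imposes on the null line bundle near a regular Liouville torus; second, check that the system of the Example violates that rigidity. For the first step I would argue as follows. If a folded integrable system $(M,\omega,F)$ were, in a semilocal neighbourhood of a regular point $p\in Z$, equivalent to a twisted folded cotangent lift model, then by the very form of that model there would be coordinates $(t,\theta_1,p_2,\theta_2,\dots,p_n,\theta_n)$ on a neighbourhood $U$ of the Liouville torus through $p$, with $t$ a defining function of $Z$ and the $\theta_i$ being $S^1$-valued, in which $\omega|_U = t\,dt\wedge d\theta_1 + \sum_{i=2}^n dp_i\wedge d\theta_i$. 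Restricting to $Z\cap U=\{t=0\}$ gives $i^*\omega|_Z=\sum_{i=2}^n dp_i\wedge d\theta_i$, whose one-dimensional kernel is spanned by $\partial/\partial\theta_1$; hence the null line bundle $L=V\cap TZ$ would be tangent to the $\theta_1$-circles, and its foliation would be a fibration by circles on a neighbourhood of the torus. Thus it suffices to produce one folded integrable system admitting a regular Liouville torus in $Z$ near which the null foliation on $Z$ is \emph{not} a circle fibration.

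For the second step I would take the system of the Example and verify the facts it rests on. First, that $\omega=\sin\theta\,d\theta\wedge\alpha+\tilde\omega$ is folded symplectic with critical set $Z=\{\theta=0\}\sqcup\{\theta=\pi\}$, two copies of $S^2\times S^1$, and that $i^*\omega|_Z=\tilde\omega$, so that $L=\ker\tilde\omega|_Z$ is spanned by the Reeb vector field $R$ of the cosymplectic structure $(\alpha,\tilde\omega)$, namely the suspension of $\phi$. Second, that $(f_1,f_2)=(\cos\theta,h)$ is a folded integrable system: $f_1,f_2$ are folded functions, their Hamiltonian vector fields commute --- one being a multiple of $R$, the other coming from the $dh\wedge d\varphi$ factor --- and $df_1\wedge df_2=-\sin\theta\,d\theta\wedge dh$ is nonzero on a dense subset of $M$ and of $Z$ as a section of $\Lambda^2 T_V^*M$, with regular points occurring over the two slices $\theta\in\{0,\pi\}$ away from the poles of $S^2$. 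The decisive computation is then that on each torus in $Z$ of the form $\{h=c\}$, with $c$ not a pole value, the generator $R$ restricts to a linear flow of irrational slope, so that its only closed orbits inside $Z$ are the two circles sitting over the poles of $S^2$. Consequently, around a regular Liouville torus lying over such a level $h=c$ the null foliation on $Z$ is not a fibration by circles, and by the first step the system admits no cotangent model there --- which is exactly the Proposition.

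The individual verifications in the second step are routine; the real content lies in the global statement that the suspension vector field of a rotation of $S^2$ with isolated fixed points has only finitely many periodic orbits --- a topological feature of the mapping torus, not something visible in a local normal form --- and in the identification $\ker i^*\omega|_Z=\ker\tilde\omega$, that is, that the degenerate directions of the glued folded form along $Z$ coincide with the Reeb directions of the cosymplectic structure. The one place where care is needed is to choose the regular Liouville torus inside $Z$ and over a non-polar latitude, so that Theorem \ref{AA} genuinely applies there and the contradiction with a hypothetical cotangent model is real rather than vacuous.
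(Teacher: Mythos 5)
Your proposal is correct and follows essentially the same route as the paper: the paper likewise observes that a cotangent (Darboux-type) model $\omega = t\,dt\wedge d\theta_1+\sum_{i\ge 2}dp_i\wedge d\theta_i$ would force the null line bundle on $Z$ to be a circle fibration near the regular torus, and then exhibits exactly the same counterexample $(S^2\times S^1\times S^1,\ \sin\theta\,d\theta\wedge\alpha+\tilde\omega)$ with $(f_1,f_2)=(\cos\theta,h)$, whose null foliation is the suspension of an irrational rotation and so has only the two closed orbits over the poles. Your added care in choosing the Liouville torus over a non-polar latitude is a sensible (implicit in the paper) refinement, but the argument is the same.
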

Typically, folded symplectic structures exhibited more flexibility (in the geometrical sense) than $b$-symplectic structures. This is captured by the fact that they adhere to an existence $h$-principle as proved Cannas \cite{anacannas} and in particular, all $4$-dimensional compact orientable manifolds admit a folded structure. On the other hand, the previous proposition can be seen as a rigidity phenomenon, which arises from considering dynamical aspects rather than geometrical ones. This rigidity arises from the existence of a canonical null foliation on the folding hypersurface. For $b$-symplectic manifolds, this null foliation is not canonical: it is defined up to Hamiltonian vector fields tangent to the leaves.  This explains why from this dynamical point of view, this flexibility allows to obtain canonical normal forms for $b$-integrable systems.

\section{ Constructions of integrable systems}

In this section, we study the existence of integrable systems on $b$-symplectic manifolds and their possible desingularization into folded integrable systems. We construct ad-hoc integrable systems on any $4$-dimensional $b$-symplectic manifold  whose critical locus admits a transverse Poisson $S^1$-action, starting from integrable systems defined on the leaf of a cosymplectic manifolds. In what follows  we will always assume that the symplectic foliation on the critical set $Z$ contains a compact leaf, and thus $Z$ is a symplectic mapping torus by \cite[Theorem 19]{GMP2}.  Furthermore, we will assume that the first singular integral induces an $S^1$-action in a neighborhood of $Z$ which is transverse to the symplectic foliation on $Z$. {In particular, the monodromy obtained by the first return map of the Hamiltonian vector field of the first integral induces a finite group action on the symplectic leaf of $Z$. The finite group action detects the points where the initial circle action is not free.}

\subsection{Structure of a $b$-integrable system in $Z$}

We start analyzing how a $b$-integrable system behaves on $Z$, the critical hypersurface of a $b$-symplectic manifold $(M,\omega)$.
\begin{claim}\label{leaf}
Let $F$ stand for a $b$-integrable system on a $b$-symplectic manifold $(M,\omega)$. Then for a fixed symplectic leaf $L$ of $Z$ there is a dense set of points in $L$ that are regular points of $F$.
\end{claim}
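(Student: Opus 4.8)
The plan is to use the local structure of a $b$-integrable system near $Z$ to reduce the claim to a genericity statement for a classical integrable system on the symplectic leaf $L$, and then to transfer the density of regular points from $Z$ to $L$ along the transverse $S^1$-action generated by the singular integral.

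First I would normalize the system near $Z$. Taking a suitable constant linear combination of $f_1,\dots,f_n$ — which preserves pairwise Poisson-commutativity, the independence of the $b$-differentials, and the set of regular points — I may assume that $f_1,\dots,f_{n-1}$ are smooth near $Z$ and $f_n=c\log|t|+g$ with $g$ smooth and $c$ the modular period. By the standing assumptions of this section $X_{f_n}$ then restricts to a smooth vector field on $Z$ generating an $S^1$-action transverse to the symplectic foliation, while $X_{f_1},\dots,X_{f_{n-1}}$ are tangent to the symplectic leaves; in particular, for $i\le n-1$, $X_{f_i}|_L$ is the Hamiltonian vector field of $f_i|_L$ with respect to $\omega_L$, and the functions $f_1|_L,\dots,f_{n-1}|_L$ pairwise Poisson-commute on $(L,\omega_L)$.

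Next I would identify the regular points. For $p\in L$ one has a splitting $T_pZ=T_pL\oplus\langle X_{f_n}(p)\rangle$ with $X_{f_1}(p),\dots,X_{f_{n-1}}(p)\in T_pL$, so $p$ is a regular point of $F$ if and only if $d(f_1|_L)(p),\dots,d(f_{n-1}|_L)(p)$ are independent at $p$. A direct computation in $b$-Darboux coordinates along $Z$ shows moreover that, for $q\in Z$, the $b$-form $df_1\wedge\dots\wedge df_n$ is non-zero at $q$ exactly when $d(f_1|_{L_q})(q),\dots,d(f_{n-1}|_{L_q})(q)$ are independent on the leaf $L_q$ through $q$ — that is, exactly when $q$ is regular. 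Hence the requirement in Definition \ref{def:pbintegrable} that $df_1\wedge\dots\wedge df_n\neq 0$ on a dense subset of $Z$ is precisely the statement that the set $\mathcal R$ of regular points of $F$ is dense (and open) in $Z$.

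It remains to deduce that $L\cap\mathcal R$ is dense in $L$. Suppose not: some non-empty open $W\subseteq L$ consists entirely of non-regular points. Since $\{f_n,f_i\}=0$ gives $[X_{f_n},X_{f_i}]=X_{\{f_n,f_i\}}=0$, the flow $\psi^s$ of $X_{f_n}$ carries each $X_{f_i}$ to itself and therefore maps non-regular points to non-regular points; and since $X_{f_n}|_Z$ is transverse to the symplectic foliation, $\bigcup_{|s|<\varepsilon}\psi^s(W)$ is a non-empty open subset of $Z$ all of whose points are non-regular, contradicting the density of $\mathcal R$ in $Z$. The step I expect to be the main obstacle is the middle one: checking in $b$-Darboux coordinates that the system can be normalized so that $f_n$ is the only singular integral and that, along $Z$, the $b$-independence condition $df_1\wedge\dots\wedge df_n\neq 0$ coincides with smooth independence of $X_{f_1},\dots,X_{f_n}$; once this dictionary is established, the transversality of the modular $S^1$-action makes the transfer from $Z$ to $L$ immediate.
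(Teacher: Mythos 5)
Your proposal is correct and follows essentially the same route as the paper's proof: both argue by contradiction, flowing a hypothetical open set of non-regular points in $L$ along the Hamiltonian flow of the genuine $b$-integral (which is transverse to the symplectic foliation and, by Poisson-commutativity, preserves the non-regular locus) to sweep out an open subset of $Z$ where $df_1\wedge\dots\wedge df_n=0$, contradicting the density requirement in Definition \ref{def:pbintegrable}. The extra normalization and the dictionary between $b$-independence and leafwise independence that you develop are consistent with, though more detailed than, what the paper records.
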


\begin{proof}
Assume that the set of regular points in a fixed leaf $L$ is not dense. Then we can find an open neighbourhood $U$ in $L$ which does not contain any regular point, i.e. $df_1 \wedge \dots \wedge df_n=0$ (when seen as a section of $\Lambda^n({}^bT^*M)$).  However,  in order for $F$ to define a $b$-integrable system,  one of the functions has to be a genuine (i.e., non-smooth) $b$-function in a neighborhood of $Z$. In other words,  $f=c\log |t|+g$ with $c\neq 0$ and $g$ a smooth function. We can assume that $f_1=f$ is a genuine $b$-function in a neighborhood $U'$ in $Z$ containing $U$. Since $c\neq 0$, it defines a Hamiltonian vector field whose flow is transverse to the symplectic leaf $L$. The function $f_1$ Poisson commutes with all the other integrals, and so the the flow of $f_1$ preserves $df_1 \wedge \dots \wedge df_n$.

Denote by $\varphi_t$ the flow of $X_{f_1}$. Then the set $V=\{\varphi_t(U')\enspace | \enspace t\in (0,\varepsilon)\}$ is an open subset of $Z$ where $df_1\wedge \dots \wedge df_n =0$. This is a contradiction with the fact that $F=(f_1,\dots,f_n)$ defines a $b$-integrable system.
\end{proof}

{ Once we take into the account that the first integral $f_1=c\log |t|$ induces an $S^1$-action, we can deduce the semi-local structure of the system.
\begin{prop}\label{s1action}
Let $(M,\omega)$ be a $b$-symplectic manifold admitting a $b$-integrable system such that $f_1=c\log |t|$ defines an $S^1$-action in the neighborhood of $Z$. Then $(f_2,...,f_n)$ induces an integrable system on each symplectic leaf $L$ on $Z$ which is invariant by the monodromy of the $S^1$-action.
\end{prop}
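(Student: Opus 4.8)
The plan is to reduce the statement to a restriction argument, using the $b$-Darboux normal form only to verify two local facts: that the Hamiltonian vector field of a smooth function is tangent to the symplectic leaves of $Z$, and that $X_{f_1}$ is transverse to those leaves. First I would normalize the system. Since $f_1=c\log|t|$ is the only non-smooth component, writing $c_j\log|t|$ for the singular part of $f_j$ and replacing $f_j$ by $f_j-\tfrac{c_j}{c}f_1$ for $j\ge 2$ one may assume $f_2,\dots,f_n\in C^\infty(M)$; this preserves pairwise Poisson commutativity, the non-vanishing of $df_1\wedge\cdots\wedge df_n$, and the span $\langle X_{f_1},\dots,X_{f_n}\rangle$, hence the set of regular points. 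Because the $b$-Poisson bivector is an honest (smooth) bivector field, $\{f_i,f_j\}$ is smooth for $i,j\ge 2$ and vanishes on $M\setminus Z$, hence on $M$; restricting, $\{f_i|_Z,f_j|_Z\}_Z=0$ and therefore $\{f_i|_L,f_j|_L\}_{\omega_L}=0$ on each symplectic leaf $L$ of $Z$. In a $b$-Darboux chart ($\omega=dx_1\wedge\frac{dy_1}{y_1}+\sum_{i\ge 2}dx_i\wedge dy_i$, $Z=\{y_1=0\}$, leaves $\{y_1=0,\ x_1=\mathrm{const}\}$, $\omega_L=\sum_{i\ge 2}dx_i\wedge dy_i$) one computes directly that for smooth $g$ the vector field $X_g$ has both its $\partial_{x_1}$- and $\partial_{y_1}$-components proportional to $y_1$, so $X_g|_Z$ is tangent to the leaves and equals the $\omega_L$-Hamiltonian vector field of $g|_L$; whereas $X_{f_1}=X_{c\log|y_1|}=-c\,\partial_{x_1}$ is tangent to $Z$ but transverse to every leaf. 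Invariantly, $X_{f_1}$ is $c$ times a modular vector field $u$, i.e. the suspension vector field of the mapping-torus presentation $Z=(L\times[0,k])/\!\sim$ of Definition~\ref{def:modperiod}.

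Next comes independence. Set $g_j:=f_j|_L$ for $j=2,\dots,n$; these are $n-1$ pairwise commuting smooth functions on the $(2n-2)$-dimensional symplectic manifold $(L,\omega_L)$. By Claim~\ref{leaf} the regular points of $F$ are dense in $L$, and at such a point $p$ the vectors $X_{f_1}(p),\dots,X_{f_n}(p)$ are linearly independent. Since $X_{f_1}(p)$ is transverse to $T_pL$ while $X_{f_2}(p),\dots,X_{f_n}(p)\in T_pL$, the latter $n-1$ vectors must themselves be linearly independent in $T_pL$; as $X_{f_j}|_L=X^{\omega_L}_{g_j}$, this means $dg_2\wedge\cdots\wedge dg_n\neq 0$ at $p$. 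Together with the commutativity established above, $(g_2,\dots,g_n)$ is an integrable system on $L$.

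Finally, invariance under the monodromy. Since $X_{f_1}(f_j)=\{f_1,f_j\}=0$, each $f_j$ is constant along the orbits of $X_{f_1}$; restricting to $Z$, $f_j|_Z$ is constant along the orbits of $u=\tfrac1c X_{f_1}|_Z$. The monodromy $\phi\colon L\to L$ of the $S^1$-action---equivalently, the monodromy of $Z$ as a symplectic mapping torus, realized as the time-$k$ flow of $u$ carrying the slice $L$ to itself---therefore satisfies $g_j\circ\phi=g_j$ for all $j$, so $(g_2,\dots,g_n)$ is $\phi$-invariant, which is the assertion. The one delicate point is the identification made in the first paragraph: that $X_{f_1}$ is a multiple of the modular vector field, so that the first-return map of its flow to a fixed symplectic leaf coincides with the mapping-torus monodromy $\phi$ of~\cite{GMP2}. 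This uses both the hypothesis that $f_1$ generates the transverse $S^1$-action near $Z$ and the mapping-torus description of $Z$; once it is in place, both the integrability on $L$ and its $\phi$-invariance follow as above.
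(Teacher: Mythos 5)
Your proof is correct and follows essentially the same route as the paper's: normalize so that $f_2,\dots,f_n$ are smooth, use Poisson commutation with $f_1$ to see that the $S^1$-flow preserves the remaining integrals and the symplectic foliation, and conclude invariance under the first-return map. The only difference is that you additionally verify, via the $b$-Darboux transversality computation and Claim~\ref{leaf}, that the restricted functions are independent and commute on each leaf --- a point the paper's proof leaves implicit.
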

\begin{proof}
The fact that we may always assume that in a neighborhood of $Z$ the first integral is $f_1=c\log |t|$ follows from  remark 16 in \cite{KMS}(see also Proposition 3.5.3 in \cite{K}), where $c$ is the modular period of that connected component, and $f_2,...,f_n$ are smooth.  Observe that because $f_1$ is regular everywhere in a neighborhood of $Z$, the induced $S^1$-action has no fixed points.

{ By hypothesis, the Hamiltonian vector field $X_{c\log |t|}$ commutes with the Hamiltonian vector fields $X_{f_2},...,X_{f_n}$ which implies that the flow $\varphi_t$ of the $S^1$-action preserves each of the  functions $f_2,...,f_n$. The flow also preserves the symplectic foliation in $Z$. Thus, fixing a symplectic leaf $L$, the flow $\varphi_t$ satisfies $\varphi_t(L)\cong L$ and $\varphi_t^*(f_2,...,f_n)=(f_2,...,f_n)$. This shows that on each leaf the functions $f_2,...,f_n$ induce the same integrable system. In particular this integrable system in $L$ is preserved by the first return map of the monodromy in that fixed leaf, implying that the system is invariant by that finite group action.}
\end{proof}
%
%{\red Comentari: Es pot afegir que en el cas general de un $b$-integrable system (sense necesariament accio $S^1$ transversa), $f_2,...,f_n$ indueix un sistema integrable a $L$ que es invariant per el first return map (ja no necesariament una accio de un grup finit). }

\begin{rem}
{In the jargon of three-dimensional geometry, the connected components of the critical set $Z$ of a 4-dimensional manifold are Seifert manifolds with orientable base and vanishing Euler number. This follows from the fact that $Z$ is a mapping torus and that the first Hamiltonian vector field induces an $S^1$-action without fixed points.
}
\end{rem}}

\subsection{Construction of $b$-integrable systems}

Taking into the account the last remark, in order to construct $b$-integrable systems we will assume that $Z$ is the mapping torus of a periodic symplectomorphism of a compact leaf $L$ on $Z$. { This symplectomorphism defines a finite group action on $L$.  This is why in order to construct $b$-integrable systems on $4$ dimensional $b$-symplectic manifolds, we start by proving that we can always find a non-constant function  which is invariant under  a symplectic finite group action on a surface.}

\begin{claim}\label{inv}
Let $\mathbb{Z}_k$ be a finite group acting of a symplectic surface $\Sigma$. Then there exists a non-constant analytic function $F$ invariant by the group action.
\end{claim}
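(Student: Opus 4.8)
The plan is to average over the finite group. Since $\mathbb{Z}_k$ is a finite group acting on the surface $\Sigma$, the most direct route is to take any non-constant analytic (or smooth) function $g$ on $\Sigma$ and symmetrize it by setting
\[
F(x) = \sum_{j=0}^{k-1} g\bigl(\phi^j(x)\bigr),
\]
where $\phi$ generates the $\mathbb{Z}_k$-action. This $F$ is manifestly invariant, since precomposing with $\phi$ merely permutes the terms of the sum cyclically. The only thing left to check is that $g$ can be chosen so that $F$ is non-constant, and that $F$ inherits the analyticity of $g$ (the latter is immediate: a finite sum of analytic functions is analytic, and $\phi$, being a symplectomorphism, can be taken real-analytic or at least the pullback preserves the relevant regularity class; if one only insists on smoothness, this point is trivial).

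Next I would argue non-constancy. The map $g \mapsto F = \sum_j \phi^{j*} g$ is a linear projection onto the subspace of invariant functions, so it suffices to show this subspace contains a non-constant element, equivalently that the averaging operator is not identically equal to (a multiple of) the constant functional. One clean way: pick two points $p, q$ in $\Sigma$ lying in \emph{distinct} orbits of the $\mathbb{Z}_k$-action — such points exist because $\Sigma$ is a surface and the action is by a finite group, so orbits are finite while $\Sigma$ is uncountable. By a bump-function (in the smooth case) or by a standard Runge/Weierstrass-type argument (in the analytic case), choose $g$ with $g \equiv 1$ near the orbit of $p$ and $g \equiv 0$ near the orbit of $q$; then $F(p) = k$ while $F(q) = 0$, so $F$ is non-constant. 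If one wants a genuinely analytic function on all of $\Sigma$ and $\Sigma$ is compact, I would instead invoke the fact that $\Sigma$ carries analytic functions separating points (e.g. embed $\Sigma$ real-analytically in some $\mathbb{R}^N$ by Morrey–Grauert and restrict a coordinate), pick such a $g$ with $g(p) \neq g(q)$ for $p,q$ in distinct orbits, and note that $F(p) - F(q) = \sum_j [g(\phi^j p) - g(\phi^j q)]$ need not vanish; to be safe one perturbs $g$ within the (infinite-dimensional) space of analytic functions to avoid the measure-zero locus where this sum is zero.

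The main obstacle — really the only subtle point — is producing a non-constant \emph{analytic} invariant rather than merely a smooth one, since bump functions are unavailable in the analytic category. I would handle this exactly as sketched: first establish that $\Sigma$ admits enough global analytic functions (via a real-analytic embedding into Euclidean space, which exists for any smooth compact surface once it is given a compatible real-analytic structure, and a symplectic surface of dimension two can be taken analytic after an isotopy), then choose a point-separating analytic $g$ and symmetrize. Because the averaging operator from analytic functions to analytic invariant functions is surjective onto the invariants and the space of analytic functions is large enough to separate the finitely many orbits in play, a non-constant invariant analytic $F$ must exist; if the naive symmetrization of a particular $g$ happened to be constant, replacing $g$ by $g + \varepsilon h$ for a generic analytic $h$ removes this degeneracy. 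Finally I would remark that this argument is insensitive to whether the action has fixed points, which is why it applies uniformly to the monodromy of the Seifert critical set in the subsequent construction.
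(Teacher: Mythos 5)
Your proposal is correct and follows essentially the same route as the paper: both average an analytic function over the $\mathbb{Z}_k$-action and then argue that a generic choice makes the average non-constant. The only difference is in how non-constancy is certified --- you separate two distinct orbits by function values and perturb within the (infinite-dimensional) space of analytic functions, while the paper observes that for generic $f$ the sum of the pulled-back differentials $df_p + df_{\phi(p)} + \cdots$ is nonzero at some point --- and your version is if anything more careful about the availability of global analytic functions, a point the paper leaves implicit in the word ``generic.''
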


\begin{proof}
Take $f$ a generic analytic function in $\Sigma$. Consider the averaged function given by the \emph{averaging trick}:
$$ F(x):= \sum_{i=1}^{k-1} f(i.x) $$\label{averagingtrick}.
By construction this analytic function is invariant by the action of $\mathbb{Z}_k$. Given a point $p$ in $\Sigma$,  the differential of $F$ vanishes at $p$ if and only if $df_p+df_{2.p}+...df_{(k-1).p}=0$. Observe that for a generic $f$, there exists a point where this condition is not fulfilled. In particular, we deduce that $dF_p\neq 0$ at some point $p$, and hence $F$ is not a constant function.
\end{proof}

In the claim above we can replace the analytic condition by a Morse function $F$. See for instance \cite{Wass} for a proof of the existence and density of invariant Morse functions by the action of a compact Lie group.

\begin{theorem}\label{thm:bexist}
Let $(M,\omega)$ be a $b$-symplectic manifold of dimension $4$ with critical set $Z$ which is a mapping torus associated to a periodic symplectomorphism. Then $(M,\omega)$ admits a  $b$-integrable system.
\end{theorem}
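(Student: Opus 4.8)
The plan is to write down the two commuting $b$-functions explicitly in a neighbourhood of the critical hypersurface $Z$, with the construction dictated by the mapping torus structure, and then to extend them globally. Working one connected component of $Z$ at a time, by hypothesis $Z$ is the mapping torus of a periodic (finite order, say $\phi^m=\mathrm{id}$) symplectomorphism $\phi$ of a compact symplectic leaf $(L,\omega_L)$. Periodicity of $\phi$ is the decisive point: it makes the flow of the modular vector field of $Z$ periodic, hence, in the semilocal model around $Z$, the first singular integral $f_1:=c\log|t|$ — with $t$ a defining function of $Z$ and $c$ the modular period, as in Proposition \ref{s1action} and \cite{KMS} — has Hamiltonian vector field $X_{f_1}$ generating a fixed-point-free $S^1$-action $\Psi$ on a tubular neighbourhood $U\cong Z\times(-\varepsilon,\varepsilon)$. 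The action $\Psi$ preserves each slice $Z\times\{t_0\}$, and on $Z$ it is the suspension flow of $\phi$ presenting $Z$ as a mapping torus; its non-free orbits are the exceptional fibres of the Seifert fibration of $Z$, with isotropy governed by $\phi$.

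To produce the second integral I would apply Claim \ref{inv} to the symplectic surface $L$ together with the finite symplectic action of $\mathbb{Z}_m$ generated by $\phi$: there is a non-constant Morse function $F$ on $L$ invariant under $\phi$. Since $t$ is constant along $\Psi$-orbits and every $\Psi$-orbit meets the leaf $L=L_0\subset Z$ in a single $\mathbb{Z}_m$-orbit of points, the rule $f_2\big(\Psi_s(x,t_0)\big):=F(x)$ for $x\in L$ is well defined precisely by $\phi$-invariance of $F$, and gives a smooth function on $U$ with $X_{f_1}(f_2)=0$, i.e. $\{f_1,f_2\}=0$.

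Next I would check that $(f_1,f_2)$ is a $b$-integrable system on $U$. As a section of $\Lambda^2({}^bT^*U)$ one has $df_1\wedge df_2=c\,\frac{dt}{t}\wedge df_2$, and $f_2$ only depends on the leaf coordinate reached by flowing back to $L$, so on each slice $Z\times\{t_0\}$ it restricts to the $\Psi$-spread of the Morse function $F$; its critical set is the union of the $\Psi$-orbits of the finitely many critical points of $F$, a closed nowhere dense subset of $U$ — and of $Z$ in the case $t_0=0$. Off this set $df_2$ is non-zero and, being leafwise, non-proportional to $dt$, so $df_1\wedge df_2\neq0$ there; this gives density in $U$ and in $Z$, in accordance with Claim \ref{leaf} and Proposition \ref{s1action}. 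The singular points of the system on $Z$ thus lie over the exceptional orbits of the Seifert structure — precisely the phenomenon announced in the Introduction.

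Finally, for the passage to all of $M$, take $t$ to be a global defining function of $Z$ whose critical locus away from $Z$ is nowhere dense, and set $f_1:=\psi\circ t$ with $\psi$ smooth, equal to $c\log|\cdot|$ near $0$ and with $\psi'\neq0$ elsewhere; then $f_1$ is a global $b$-function agreeing with $c\log|t|$ near $Z$, $df_1$ vanishes only on a nowhere dense set, and $X_{f_1}$ is everywhere a reparametrisation of the Hamiltonian vector field of $t$, so extending $f_2$ over $M$ by a partition of unity so as to stay constant along this flow preserves $\{f_1,f_2\}=0$ while only requiring the regular set to remain dense. I expect the crux to be concentrated near $Z$: recognising that periodicity of the monodromy is exactly what upgrades $f_1=c\log|t|$ to an $S^1$-action, thereby reducing everything to finding one invariant function on a symplectic surface (Claim \ref{inv}), and then controlling the critical locus of the averaged function so that $df_1\wedge df_2$ stays generically non-zero on $Z$ — which is also what forces the system to be singular exactly at the non-free orbits.
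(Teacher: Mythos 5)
Your construction near $Z$ is essentially the paper's: periodicity of the monodromy turns the Hamiltonian flow of $f_1=c\log|t|$ into a fixed-point-free $S^1$-action on $U\cong Z\times(-\varepsilon,\varepsilon)$, Claim \ref{inv} produces a non-constant $\phi$-invariant function on the leaf $L$, and spreading it by the action gives a second integral commuting with $f_1$ and independent of it on a dense subset of $U$ and of $Z$. That part is sound and matches the paper step for step.

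The global extension is where your argument has a genuine gap. You keep $f_1=\psi\circ t$ nontrivial on all of $M$ and then assert that $f_2$ can be extended over $M$ ``by a partition of unity so as to stay constant along this flow.'' A partition of unity does not manufacture first integrals: away from $Z$ the flow of $X_{f_1}$ is a reparametrisation of the Hamiltonian flow of $t$, which preserves the level sets of $t$ (three-manifolds) and on them can be ergodic or otherwise admit no non-constant continuous invariant function. Asking for a globally defined $f_2$ with $\{f_1,f_2\}=0$ and $df_1\wedge df_2\neq 0$ on a dense set is asking for the Hamiltonian system generated by $t$ to be integrable on $M\setminus Z$, which fails for a generic defining function on a symplectic $4$-manifold; nothing in your setup forces $t$ to be so special. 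The paper sidesteps this entirely: it multiplies \emph{both} $f_1$ and $f_2$ by a bump function $\varphi(t)$ so that the near-$Z$ system dies outside a collar of $Z$, and then fills $M\setminus U$ with finitely many Darboux balls, each carrying its own local system $x_i^2+y_i^2$ cut off to vanish to infinite order on the ball boundaries (Brailov's trick, cf.\ \cite{FT}); the resulting functions glue and are independent off a closed nowhere dense set consisting of the ball boundaries together with $Z\times\{\pm\varepsilon\}$. To repair your proof you should replace your global step by this cut-off-and-patch argument, since the requirement in Definition \ref{def:pbintegrable} is only density of the regular set, not regularity everywhere.
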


\begin{proof}
In this case, a leaf of the critical set is a surface $L$. Take a neighborhood of $Z$ of the form $U=Z\times (-\varepsilon,\varepsilon)$. Denote by $X$ the Hamiltonian vector field of the function $\log t$ for some defining function of $Z$. By hypothesis, $X$ defines a Poisson $S^1$-action in $Z$ transverse to the leaves as studied in \cite{BKM}. This $S^1$-action can have some monodromy. Denote by $\alpha$ and $\beta$ the defining one and two forms of $\omega$ at $Z$. That is, in $U$ we can assume that $\omega$ has the form $\omega= \alpha \wedge \frac{dt}{t} + \beta$ with $\alpha \in \Omega^1(Z), \beta \in \Omega^2(Z)$. Recall that both forms are closed and $i^*_L \beta$ is a symplectic form in a leaf $L$ of $Z$.

 The critical set can be described as follows: There is an equivariant cover $L \times S^1 \times (-\varepsilon, \varepsilon)$ of $U$, and we denote by $p$ the projection to $U$. This equivariant cover can be equipped with the $b$-Poisson structure
$$ \omega= \pi_{Z_0}^*\tilde \alpha \wedge \frac{dt}{t} + \pi_{Z_0}^*\tilde \beta,   $$
where $\tilde \alpha= p^*\alpha$ and $\tilde \beta= p^*\beta$. Then $U$ is Poisson isomorphic by \cite[Corollary 17]{BKM} to the quotient of the equivariant by the action of a finite group $\mathbb{Z}_k$ in the leaf given by the return time flow of the $S^1$-action and extended trivially to the neighborhood $L \times S^1 \times (-\varepsilon,\varepsilon)$.

The action of $\mathbb{Z}_k$ acts by symplectomorphisms on $L$. By Lemma \ref{inv}, there is an analytic function $F$ in $L$ which is invariant by the action. In particular, $F$ can be extended to $\tilde F$ in all $Z$ by the $S^1$-action. If $\pi$ is the projection in $U=Z\times (-\varepsilon,\varepsilon)$ to the first component, then we extend $\tilde F$ to $U$ by considering $\pi^*\tilde F$ and denote it again $\tilde F$.

We construct in the neighborhood $U$ the pair of functions $(f_1,f_2)=(\varphi(t)c\log |t|, \varphi(t)\tilde F)$ in $U$. The function $\varphi(t)$ denotes a bump function which is constantly equal to $1$ for $t\in (-\delta, \delta)$ and constantly equal to $0$ for $|t|>\delta'$, with $\delta < \delta' < \varepsilon$. Observe the functions $f_1$ and $f_2$ are linearly independent in $^bT^*M$ in a dense set of $Z\times (-\delta',\delta')$. The Hamiltonian vector field of $\varphi(t)f_1$ generates the transverse $S^1$ action extended to $U$, and the Hamiltonian vector field of $\tilde F$ is tangent to the symplectic leaves in each $Z\times \{t_0\}$. Hence $\{f_1,\tilde F\}=0$. Now using the properties of the Poisson bracket we obtain
\begin{align*}
\{f_1,\varphi(t)\tilde F\}&= -\{\varphi(t)\tilde F, f_1\} \\
				&= \{ \varphi(t),f_1\} \tilde F + \{ \tilde F, f_1 \} \\
				&= 0 + 0,
\end{align*}
where we used that $f_1$ only depends on the coordinate $t$. We obtain an integrable system in the neighborhood of the critical locus $U$. To obtain an integrable system in all $M$, we do it as in the proof of existence of integrable systems in symplectic manifolds as shown by Brailov (cf. \cite{FT}). That is, cover $M\setminus U$ by Darboux balls, each of them equipped with a local integrable system of the form $f_i'=x_i^2+y_i^2$. By cutting off this system using a { function $\varphi (\sum_{i=1}^2( x_i^2+y_i^2) )$}, we can obtain for each Darboux ball a globally defined pair of functions $f_i=\varphi.f_i'$. We can now cover $M\setminus U$ by a finite amount of balls $B_i$ whose intersection is only the union of their boundaries. We choose $\varphi$ in each ball such that the locally defined integrable systems vanish in all derivatives exactly at these boundaries. The closed set of zero measure where the globally constructed $n$-tuple of functions are not independent is composed of the boundaries of the balls, and includes $Z \times \{-\varepsilon,\varepsilon\}$. { This is illustrated in Figure \ref{fig:balls}, where only some balls are depicted close to the boundary of $Z\times [-\varepsilon,\varepsilon]$. The closed set where the functions vanish are represented by  the black-colored boundaries.}

\begin{figure}[!h]
\begin{center}
\begin{tikzpicture}
     \node[anchor=south west,inner sep=0] at (0,0) {\includegraphics[scale=0.24]{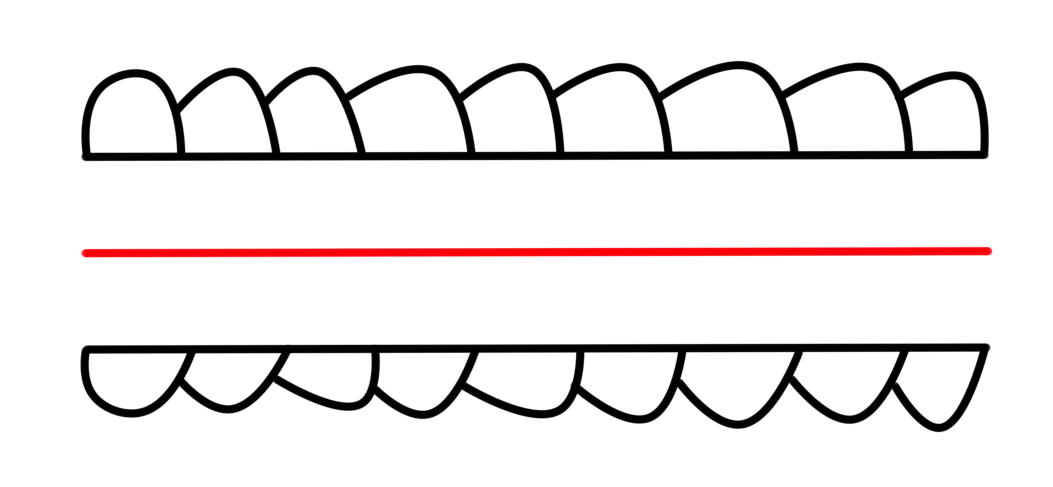}};
     \node at (9.5,1.5) {$U\cong Z\times (-\varepsilon,\varepsilon)$};
     \draw [<->] (7.5,1)--(7.5,2);
    \node at(5,3.5) {$M\setminus U$};
     \node at(1.5,3) {$B_1, B_2, \dots$};
     \node at (-3,0) {$\enspace$};
     \node[red] at (-0.2,1.6) {$Z$};
\end{tikzpicture}
\caption{Some Darboux balls filling $M\setminus U$.}
\label{fig:balls}
\end{center}
\end{figure}

This allows to glue the system in each ball and with the system we constructed in $U$, yielding a pair of commuting functions $F_1,F_2$ such that $dF_1\wedge dF_2\neq 0$ in a dense set of $M$ and $Z$.
\end{proof}

\begin{rem}\label{rem:highdimen}
The proof generalizes to higher dimensions as long as one can construct an integrable system in the symplectic leaf invariant by the finite group action. This is the content of Claim \ref{averagingtrick} for the case of a symplectic surface.
\end{rem}

\begin{rem}
{ The original construction of integrable systems in regular symplectic manifolds via the covering of Darboux balls yields an integrable system without any interesting property. However, the construction in Theorem \ref{thm:bexist} gives rise to a lot of examples of $b$-integrable systems that near the singular set $Z$ can be very rich from a semi-global point of view. }
\end{rem}

The following theorem is Theorem B in \cite{FMM}:
\begin{theorem}\label{thm:fmm}
Any cosymplectic manifold of dimension 3 is the singular
locus of orientable, closed, b-symplectic manifolds.
\end{theorem}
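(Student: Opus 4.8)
I would reduce the statement to building a suitable symplectic filling of $N$ and then doubling it. Since the critical set of a \emph{closed} $b$-symplectic manifold is compact, I may assume $N$ is closed; write $(\alpha,\beta)$ for its cosymplectic structure, so $d\alpha=d\beta=0$ and $\alpha\wedge\beta$ is a volume form on $N$. The key reduction is: it suffices to produce a compact symplectic $4$-manifold $(W,\omega_W)$ with $\partial W$ diffeomorphic to $N$ carrying the prescribed cosymplectic structure, and with a boundary collar $N\times(0,\varepsilon]$ (the boundary being $\{\tau=0\}$) on which $\omega_W=-\tfrac{d\tau}{\tau}\wedge\alpha+\beta$; equivalently, $W\setminus\partial W$ has a cosymplectization end $\bigl(N\times[0,\infty),\,ds\wedge\alpha+\beta\bigr)$ and is compact away from it. Granting such a $W$, I would form the double $M:=W\cup_{\mathrm{id}_N}W$. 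Near the glued hypersurface $Z\cong N$ introduce a coordinate $t\in(-\varepsilon,\varepsilon)$ equal to $\tau$ on the first copy and to $-\tau$ on the second; since $\tfrac{d(-\tau)}{-\tau}=\tfrac{d\tau}{\tau}$, the two collars glue to the smooth $b$-form $\omega=-\tfrac{dt}{t}\wedge\alpha+\beta$ on $N\times(-\varepsilon,\varepsilon)$. This $\omega$ is closed and satisfies $\omega^2=-2\tfrac{dt}{t}\wedge\alpha\wedge\beta\neq0$ as a section of $\Lambda^4({}^bT^*M)$, while away from $Z$ it is the symplectic form $\omega_W$. Hence $(M,\omega)$ is a closed $b$-symplectic manifold, automatically orientable since it carries the $b$-volume $\omega^2$, and its critical hypersurface is exactly $Z\cong N$ with induced cosymplectic structure $(\alpha,\beta)$. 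This step is a routine local normal-form computation of $b$-Darboux type.

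It then remains to construct $W$. The plan is: since $\Omega_3^{SO}=0$, $N$ bounds a compact oriented $4$-manifold, and by a standard surgery argument on embedded circles I may take $V^4$ with $\partial V=N$ and $\pi_1(V)=1$. Set $W^\circ:=V\cup_{\partial V}\bigl(\partial V\times[0,\infty)\bigr)$, an open $4$-manifold, and fix on the end $\partial V\times[1,\infty)$ the symplectic germ $\omega_\infty:=ds\wedge\alpha+\beta$. Because $V$ retracts onto a $3$-complex, is orientable (hence $\mathrm{Spin}^c$) and carries no numerical constraint ($H^4(V)=0$), I would first check that $W^\circ$ admits an almost complex structure restricting near the end to one compatible with $\omega_\infty$: the only obstruction lies in $H^3(V;\mathbb Z)$, and matching the prescribed $c_1$ along $\partial V$ is unobstructed after, if necessary, stabilizing $V$ by connected sums with $\overline{\mathbb{CP}}{}^2$ (which does not change $\partial V=N$). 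Equivalently, $\omega_\infty$ extends to a nondegenerate (not necessarily closed) $2$-form $\omega_0$ on $W^\circ$. I would then invoke Gromov's $h$-principle for symplectic forms on open manifolds, in its form relative to a closed subset on a neighborhood of which the form is already symplectic: $\omega_0$ is homotopic, rel a neighborhood of the end, to a genuine symplectic form $\omega_W$ on $W^\circ$ that equals $\omega_\infty$ near the end. Finally, changing coordinates $\tau=e^{-s}$ and adding the hypersurface $\partial V\times\{\infty\}=\{\tau=0\}$ produces the compact $W$ with the collar form $-\tfrac{d\tau}{\tau}\wedge\alpha+\beta$ required above, completing the construction.

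The genuinely nontrivial step is the construction of the filling $W$: pinning down the almost complex structure compatible with the cosymplectization germ along $\partial V$ — which is precisely where the freedom to choose and stabilize the null-cobordism $V$ is used — and then applying the (relative) $h$-principle on the open manifold $W^\circ$; everything else is local bookkeeping. I note that, alternatively, one can first reduce $N$ to a symplectic mapping torus by perturbing $[\alpha]$ to a rational class (permissible since $\alpha\wedge\beta\neq0$ is an open condition) and applying Tischler's theorem, and then attempt a more explicit filling via a Lefschetz-type fibration over a disc; but since a general area-preserving monodromy is not a product of positive Dehn twists, the flexible $h$-principle argument above is what makes the construction work for all cosymplectic $N$ uniformly.
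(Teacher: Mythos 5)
You should first note that the paper contains no proof of this statement: it is quoted verbatim as Theorem B of \cite{FMM}, so there is no internal argument to compare against and your proposal has to stand on its own. Your reduction is the right one in outline: producing a compact symplectic manifold $W$ with $\partial W\cong N$ and collar form $-\frac{d\tau}{\tau}\wedge\alpha+\beta$, and then doubling, is sound, and the gluing computation via $\frac{d(-\tau)}{-\tau}=\frac{d\tau}{\tau}$ correctly yields a smooth, nondegenerate closed section of $\Lambda^2({}^bT^*M)$ with $\omega^2=-2\,\frac{dt}{t}\wedge\alpha\wedge\beta\neq 0$ and critical set $N$ carrying the given cosymplectic structure. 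The formal step is also essentially standard, though the obstructions to extending the almost complex structure live in $H^3(V,\partial V;\pi_2(S^2))$ and $H^4(V,\partial V;\pi_3(S^2))\cong\mathbb Z$ rather than in $H^3(V)$ and $H^4(V)$; the first vanishes once $\pi_1(V)=1$ and the second is indeed killed by your $\overline{\mathbb{CP}}{}^2$ stabilization.

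The genuine gap is the step you yourself flag as carrying all the weight: the ``relative'' h-principle. Gromov's h-principle for symplectic forms holds on open manifolds, but its relative form requires compressing $W^\circ$, by an isotopy fixed near the closed subset $A$, into a neighborhood of $A$ union a polyhedron of positive codimension. In your situation $W^\circ\setminus(\text{a neighborhood of the end})\cong V$ is a compact $4$-manifold whose entire boundary lies along the frozen region: any isotopy of $W^\circ$ equal to the identity near the end is a proper degree-one self-map, hence surjective, so no such compression exists and holonomic approximation cannot be localized away from a top-dimensional cell. This is not a removable technicality. If the principle you invoke were true as stated, then applying it to $W^\circ=\mathrm{int}(B^4)\cup\bigl(S^3\times[0,\infty)\bigr)$ with end model $d(e^s\lambda)$ would produce a strong symplectic filling of every contact structure on $S^3$ (the formal extension problem over a ball is unobstructed), contradicting the Gromov--Eliashberg theorem that overtwisted contact structures admit no such filling. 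Hence ``a symplectic germ on a cosymplectic cylindrical end extends over an almost-complex null-cobordism'' is precisely the nontrivial content of the theorem and must be proved by an argument specific to this kind of end, which is what \cite{FMM} supplies; it is not delivered by the open-manifold h-principle. Incidentally, the reason you give for discarding the more constructive route is also inaccurate: positive Dehn twists generate the mapping class group of a closed orientable surface as a monoid (e.g. $(t_at_b)^6=1$ on the torus expresses $t_a^{-1}$ as a positive word), so every mapping class is a product of positive twists, and the mapping-torus/Lefschetz-fibration picture obtained after a Tischler-type argument is in fact much closer to a working construction of the filling than the flexibility argument you propose.
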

In particular, whenever the cosymplectic manifold has periodic monodromy, it can be realized as the critical locus of a $b$-symplectic manifold with a $b$-integrable system. { Theorem \ref{thm:fmm} requires specifically that $Z$ is connected. If we drop that requirement, there is a direct construction (Example 19 in \cite{GMP1}) to realize any cosymplectic manifold as the singular locus of a $b$-symplectic manifold that we will  introduce later.}

The proof of Theorem \ref{thm:bexist} can be adapted to obtain folded integrable systems in the desingularized folded symplectic manifold resulting from applying Theorem \ref{thm:desing}.

\begin{corollary}
Let $(M,\omega)$ be a $b$-symplectic manifold in the hypotheses of Theorem \ref{thm:bexist}. Then the desingularized folded symplectic manifold $(M,\omega_\varepsilon)$ admits a folded integrable system.
\end{corollary}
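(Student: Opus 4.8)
The plan is to follow the proof of Theorem~\ref{thm:bexist} essentially verbatim, changing only the singular first integral in a neighbourhood of $Z$. Since $\omega_\varepsilon$ agrees with $\omega$ outside an $\varepsilon$-neighbourhood $U_\varepsilon$ of $Z$, the first thing I would do is shrink the desingularization scale so that $U_\varepsilon\subset\{|t|<\delta\}$, where $\delta$ is the radius on which the cut-off $\varphi$ from the proof of Theorem~\ref{thm:bexist} is identically $1$. On $M\setminus U_\varepsilon$ one then has $\omega_\varepsilon=\omega$, so the Brailov-type part of the system built by covering $M\setminus U$ with Darboux balls (cf.~\cite{FT}), together with the transition annulus $\delta<|t|<\varepsilon$ on which all functions vanish to infinite order, can be carried over unchanged. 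Only the piece of the system living on $\{|t|<\delta\}$, where the folded structure genuinely differs from $\omega$, needs to be rebuilt.

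The key step is the replacement of $c\log|t|$ by its folded counterpart. Recall that near $Z$ one may write $\omega=\alpha\wedge\frac{dt}{t}+\beta$ with $\alpha,\beta$ closed forms on $Z$ (as in the proof of Theorem~\ref{thm:bexist}); the desingularization of \cite{Deblog} replaces the primitive $\log|t|$ of $\frac{dt}{t}$ by a smoothing $f_\varepsilon$, so that $\omega_\varepsilon=\alpha\wedge df_\varepsilon(t)+\beta$ with $f_\varepsilon$ even, equal to $\log|t|$ for $|t|>\varepsilon$, and with $f_\varepsilon'(0)=0$, $f_\varepsilon''(0)\neq0$ — hence $\omega_\varepsilon$ is folded along $Z$, and up to normalization $f_\varepsilon(t)=\tfrac{c}{2}t^2+O(t^4)$ near $t=0$, with $c$ the modular period. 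The crucial observation is that the transverse circle generator $X:=X^{\omega}_{\log|t|}$ is already a \emph{smooth} vector field (tangent to $Z$), and a one-line computation shows $\iota_{X}\omega_\varepsilon=-df_\varepsilon(t)$; that is, $X$ is also the $\omega_\varepsilon$-Hamiltonian vector field of $f_\varepsilon(t)$. Thus $f_\varepsilon(t)$ is a folded function on $(M,\omega_\varepsilon)$ — its Hamiltonian vector field is $X$ and $df_\varepsilon|_Z=0$ — whose Hamiltonian flow is the same transverse circle action that, on the $b$-side, was generated by $c\log|t|$.

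With this in hand I would set $(f_1^\varepsilon,f_2^\varepsilon)=(\varphi(t)f_\varepsilon(t),\,\varphi(t)\tilde F)$ on $U$, with \emph{the same} $\mathbb Z_k$-invariant function $\tilde F$ furnished by the averaging trick of Claim~\ref{inv} and extended to $U$ by the $S^1$-action and the projection $U\to Z$ (the desingularization acts only in the transverse $t$-direction, on which $\mathbb Z_k$ acts trivially, so it descends from the equivariant cover exactly as in Theorem~\ref{thm:bexist}; the equivariant desingularization of \cite{convexitygmw} may also be invoked here). The commutation $\{f_1^\varepsilon,f_2^\varepsilon\}_{\omega_\varepsilon}=0$ then follows line for line from the argument in Theorem~\ref{thm:bexist}: $X_{f_\varepsilon(t)}=X$ preserves $\tilde F$ by $S^1$-invariance, and $f_\varepsilon(t)$ depends only on $t$, so $\{f_1^\varepsilon,\varphi(t)\tilde F\}=\{\varphi(t),f_1^\varepsilon\}\tilde F+\{\tilde F,f_1^\varepsilon\}=0$. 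The independence condition must be read, as in the folded examples of Sections~3 and~4, inside the folded cotangent bundle: near $Z$ one has $df_1^\varepsilon=c\,t\,dt\,(1+O(t^2))$, a nonvanishing section of $T_V^*M$ because $t\,dt$ is one of its local generators, while $d\tilde F$ lies in the leaf directions and is nonzero on a dense subset of each symplectic leaf by Claim~\ref{inv}; hence $df_1^\varepsilon\wedge df_2^\varepsilon\neq0$ on a dense subset of $Z$ as a section of $\Lambda^2 T_V^*M$, and density on the rest of $M$ is inherited from the Brailov piece. Gluing the two systems across $\delta<|t|<\delta'$ produces a folded integrable system on $(M,\omega_\varepsilon)$.

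The statement is essentially a corollary of Theorem~\ref{thm:bexist}, so the main point to watch is precisely this replacement step: verifying that $f_\varepsilon(t)$ is a bona fide first integral of a \emph{folded} integrable system — a folded function whose Hamiltonian vector field agrees with the transverse circle generator — and that the independence $df_1^\varepsilon\wedge df_2^\varepsilon\neq0$ holds densely on $Z$ in the folded cotangent bundle (note that, on $Z$, the ordinary differential $df_1^\varepsilon$ vanishes, so one genuinely needs the $T_V^*M$ picture rather than $T^*M$). Everything else is a routine transcription of the proof of Theorem~\ref{thm:bexist} with $c\log|t|$ replaced by $f_\varepsilon(t)\sim\tfrac{c}{2}t^2$.
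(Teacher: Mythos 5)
Your proposal is correct and follows essentially the same route as the paper: the paper's proof likewise observes that the induced cosymplectic structure on $Z$ and the transverse $S^1$-action survive the desingularization, that the null line bundle is generated by the Hamiltonian vector field of a quadratic function of the defining coordinate, and then reruns the construction of Theorem \ref{thm:bexist} with the first integral $\varphi(t)c\log|t|$ replaced by $\varphi(t)t^2$ (you use $\varphi(t)f_\varepsilon(t)$ with $f_\varepsilon(t)=\tfrac{c}{2}t^2+O(t^4)$, which generates the same null line bundle and makes the identification with the original circle action slightly more transparent, but is the same argument). Your extra care in checking independence as a section of $\Lambda^2 T_V^*M$ is a welcome elaboration of a point the paper leaves implicit, not a departure from its proof.
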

\begin{proof}
The desingularization given by Theorem \ref{thm:desing} sends $\omega$ to $\omega_\varepsilon$, which is a folded symplectic structure in $M$ with critical hypersurface $Z$. The induced structure on $Z$ remains unchanged: it is a cosymplectic manifold with compact leaves whose monodromy is periodic. The $S^1$-action generated by the modular vector field becomes the null line bundle of $\omega_\varepsilon$. Such line bundle is generated in a neighborhood of $Z$ by the Hamiltonian vector field of $t^2$, where $t$ is defining function of $Z$. By Claim \ref{averagingtrick}, there is an analytic function invariant by the first return map $X_{t^2}$. One can do exactly the same construction as in the proof of Theorem $\ref{thm:bexist}$, taking as first function $f_1=\varphi(t)t^2$ instead of $\varphi(t)\log |t|$ in the neighborhood $U$ of $Z$.
\end{proof}

\section{Global action-angle coordinates: Constructions and existence}

In this section we extend toric actions on  the symplectic leaf on the critical set of a $b$-symplectic manifold and folded-symplectic manifold to a toric action in the neighbourhood of the critical set $Z$. Thus obtaining global action-angle coordinates. For certain compact extensions of this neighbourhood we obtain global action-angle coordinates on the compact manifold. In doing so, we explore obstructions for the existence of global-action angle coordinates which lie on the critical set $Z$.

For a global toric action which we combine with the finite group transversal action given by the cosymplectic structure on $Z$ to produce an example of integrable system on any $b$-symplectic/folded symplectic manifold with toric symplectic leaves on the critical set.

By doing so, we explore the limitations that this construction has to admit an extension to a global toric action and thus admit global action-angle coordinates. This limitation lays on the topology of the critical set $Z$ which can be an obstruction for the global existence of action-angle coordinates.
In other words, this construction admits global action-angle coordinates if and only if the toric structure of the symplectic leaf of the critical set $Z$ extends to a toric action of the $b$-symplectic/folded symplectic manifolds.
Toric symplectic manifolds are well-understood thanks to \cite{GMPS} and \cite{log}.

In this section we will need to following lemma (which is Corollary 16 in \cite{GMPS}):

\begin{lemma}\label{lem:product} Let $(M^{2n}, Z, \omega)$ be a $b$-symplectic manifold with a toric action and $L$ a symplectic leaf of $Z$. Then $Z \cong L \times S^1.$
\end{lemma}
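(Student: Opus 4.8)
The plan is to combine the mapping-torus description of $Z$ with the rigidity forced by the toric action. Recall that the mapping torus of a diffeomorphism which is smoothly isotopic to the identity is a trivial bundle over $S^1$; hence it suffices to show that the monodromy of $Z$ can be chosen isotopic to $\mathrm{id}_L$. First I would reduce to $Z$ connected (otherwise replace $Z$ by the connected component containing $L$); then $Z$ is compact, $L$ is a compact symplectic leaf, and by the structure theorem of \cite{GMP2} recalled above, $Z=(L\times[0,k])/_{(x,0)\sim(\phi(x),k)}$, where $\phi$ is the time-$k$ flow of a modular vector field $u$ and $k$ is the modular period.

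Next I would use the toric action to identify $u$ and control $\phi$. By the normal form for $b$-Hamiltonian toric actions near the critical set (see \cite{GMPS}) one can take the $b$-moment map with first component $f_1=c\log|t|$, $c$ the modular period, and $f_2,\dots,f_n$ smooth; then $u=\tfrac{1}{c}X_{f_1}|_Z$, and $X_{f_1}$ generates a circle $S^1\subset\mathbb T^n$. The complementary subtorus $\mathbb T^{n-1}$, generated by $X_{f_2},\dots,X_{f_n}$, is tangent to $Z$ and, on $Z$, tangent to the symplectic leaves — Hamiltonian vector fields of the induced Poisson structure on $Z$ are tangent to its symplectic leaves — and it acts effectively on the leaf $L$ (effectiveness on $L$ propagates across $Z$ along the flow of $u$, which commutes with $\mathbb T^{n-1}$, and near $Z$ the $\mathbb T^n$-action on $M$ is effective). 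Since $\dim L=2(n-1)$, this exhibits $(L,\omega_L,\mathbb T^{n-1})$ as a compact toric symplectic manifold with moment map $(f_2,\dots,f_n)|_L$. Moreover $\{f_1,f_j\}=0$ implies the flow of $X_{f_1}$ preserves each $f_j$, so $\phi$ is a $\mathbb T^{n-1}$-equivariant symplectomorphism of $L$ preserving this moment map.

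The crux — and the step I expect to be the main obstacle — is to show that such a $\phi$ is isotopic to the identity. On the dense open set $\mu_L^{-1}(\mathrm{int}\,\Delta)\cong\mathrm{int}\,\Delta\times\mathbb T^{n-1}$, equivariance forces $\phi(I,\theta)=(I,\theta+\tau(I))$, and the condition $\phi^*\omega_L=\omega_L$ forces $\sum_i\tau_i\,dI_i$ to be closed, hence $\tau=\nabla S\pmod{\mathbb Z^{n-1}}$ for a smooth function $S$ on the contractible set $\mathrm{int}\,\Delta$; then $\phi_s\colon(I,\theta)\mapsto(I,\theta+s\,\nabla S(I))$ is a path of symplectomorphisms from $\mathrm{id}$ to $\phi$. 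The delicate point is the smooth extension of $\phi_s$ across the lower-dimensional toric strata $\mu_L^{-1}(\partial\Delta)$; I would handle this with the local Delzant models at the facets, using that $\phi$ is a genuine diffeomorphism of $L$ preserving the collapsing moment components, which forces the relevant components of $\tau$ (hence of $S$) to have the regularity needed for $\phi_s$ to extend. Granting this, $\phi\simeq\mathrm{id}_L$, so $Z\cong L\times S^1$. It is worth stressing that the toric hypothesis is essential here: for a general $b$-integrable system the transverse circle action may have finite isotropy and $Z$ need not be a product — this is exactly the phenomenon exploited in Section 7.
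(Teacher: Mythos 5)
First, a remark on what you are comparing against: the paper does not prove this lemma at all --- it quotes it verbatim as Corollary 16 of \cite{GMPS} --- so you are reconstructing an argument from that reference rather than matching an internal proof. The standard argument is essentially contained in your second paragraph and then stops: once you know that $X_{f_1}$ is the fundamental vector field of a circle subgroup $S^1\subset\mathbb T^n$ and that $u=\tfrac1c X_{f_1}|_Z$ represents the modular class, the modular vector field can be chosen \emph{periodic}, so the monodromy $\phi$ of the mapping torus satisfies $\phi^m=\mathrm{id}$, where $m$ is the number of times an $S^1$-orbit meets the leaf $L$; showing $m=1$ (which is where the toric hypothesis really bites, via the semi-local normal form of \cite{GMPS}) gives $\phi=\mathrm{id}$ on the nose and $Z\cong L\times S^1$ with no isotopy argument at all. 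Your proposal sets this up and then abandons it for a much longer detour; it also never engages with the possibility $m>1$, i.e.\ that the circle orbits wrap several times around the base of the mapping torus --- precisely the finite-isotropy/Seifert phenomenon that Sections 6 and 7 of the paper exploit to produce nontrivial mapping tori. (To be fair, your route would in principle subsume this case, since a finite-order fiberwise translation is a constant rotation, hence isotopic to the identity through the torus action itself; but you should say so rather than leave the reader to notice it.)

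The genuine gap is the step you yourself flag as the crux: extending the isotopy $\phi_s$ across the degenerate strata $\mu_L^{-1}(\partial\Delta)$. On $\mathrm{int}\,\Delta\times\mathbb T^{n-1}$ everything you write is correct --- equivariance plus moment-preservation forces $\phi(I,\theta)=(I,\theta+\tau(I))$, closedness of $\sum_i\tau_i\,dI_i$ gives $\tau=\nabla S$ on the contractible set $\mathrm{int}\,\Delta$, and $\phi_s$ is the time-$s$ Hamiltonian flow of $S\circ\mu_L$. But for $\phi_s$ to be defined on all of $L$ you need $S$ (equivalently $\tau$) to extend with the right regularity over $\partial\Delta$, and ``the local Delzant models force the needed regularity'' is an assertion, not a proof: in the model $\mathbb C^k\times(\mathbb C^*)^{\,n-1-k}$ near a codimension-$k$ face one must actually check that smoothness of $\phi$ at the collapsed locus implies smoothness of the relevant components of $\tau$ as functions of $|z_j|^2$ down to $|z_j|^2=0$, and that $S$ inherits this, so that $S\circ\mu_L$ is a smooth Hamiltonian on all of $L$. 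Since this extension is the entire difficulty of the route you chose, the proof is incomplete as written. I would either carry out that local computation explicitly or, better, replace the third paragraph by the periodicity argument already latent in your second paragraph, which is both shorter and the one actually used in \cite{GMPS}.
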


Let $L$ be a toric symplectic manifold of dimension $2n-2$ and let $F=(f_2,\dots,f_n)$ be its moment map.

We know from Delzant's theorem \cite{Delzant} that the image of $F$ is a Delzant polytope. From the definition of moment map the components of $F$ Poisson commute and are functionally independent so they form an integrable system on $L$.
Consider now $\phi$ be a symplectomorphism of $L$ which is equivariant with respect to the toric action and let $Z=L\times [0,1]/\sim$ be the cosymplectic manifold associated to it.
Extend the integrable system on $Z$ to an integrable  system on $Z$ just by observing that by hypothesis the toric action commutes with the symplectomorphism defining the cosymplectic manifold.  Observe that the integrable system $F$ on the leaf extends to $Z$ only if $Z$ is a product or $F$ is invariant by the monodromy.
Denote by $(\alpha,\omega)$ the pair of 1 and 2-forms associated to the cosymplectic structure i.e, $\omega$ restricted to the symplectic leaves defines the symplectic structure on $Z$ and $\alpha$ is a closed form defining the codimension one symplectic foliation.

Following the extension theorem (Theorem 50 in \cite{GMP1}) we consider now the open $b$-symplectic manifold $U=Z\times (-\epsilon, \epsilon)$ with $b$-symplectic form,

$$\omega= \frac{df}{f}\wedge \pi^*(\alpha)+\pi^*(\omega)$$

where $\pi:U\to Z$ stands for the projection in the first component of $U$, $Z$ and $f$ is the defining function for the critical set $Z$.

Consider the map $\hat{F}=(c\log|t|, \pi^*(f_2), \dots, \pi^*(f_n))$  on with $c$ the modular period of $Z$ where we abuse notation and we write the components on the covering $L\times [0,1]$ of the mapping torus $Z$.

In this section we prove,

\begin{theorem}
The mapping $\hat{F}=(c\log|t|, \pi^*(f_2), \dots, \pi^*(f_n))$ defines a $b$-integrable system on the open $b$-symplectic manifold $Z\times (-\epsilon, \epsilon)$ thus extending the integrable system defined by the toric structure of $L$. The toric structure of $L$ extends to a toric structure on the $b$-symplectic manifold $Z\times (-\epsilon, \epsilon)$ if and only if the cosymplectic structure of $Z$ is trivial, i.e., $Z=L\times [0,1]$.
\end{theorem}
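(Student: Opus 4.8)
The plan is to treat the two assertions in turn. For the first, that $\hat F=(c\log|t|,\pi^*f_2,\dots,\pi^*f_n)$ is a $b$-integrable system, I would check the two requirements of Definition \ref{def:pbintegrable}: pairwise Poisson commutativity of the components and generic independence of their differentials, both on $Z\times(-\epsilon,\epsilon)$ and on $Z$. Working with the semilocal normal form $\omega=\frac{df}{f}\wedge\pi^*\alpha+\pi^*\omega_Z$ coming from the extension theorem (Theorem 50 in \cite{GMP1}), the $b$-Hamiltonian vector field of $c\log|t|$ is a modular vector field rescaled to have period one: it is transverse to the symplectic foliation of $Z$ and generates the $S^1$-action realizing $Z$ as the mapping torus of $L$. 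Meanwhile the $b$-Hamiltonian vector field of each $\pi^*f_i$ is tangent to the leaves $L\times\{\mathrm{pt}\}$ and restricts on each of them to the Hamiltonian vector field of $f_i$ for $\omega_L$. Hence $\{\pi^*f_i,\pi^*f_j\}=\pi^*\{f_i,f_j\}_L=0$ since $(f_2,\dots,f_n)$ is the moment map of a toric action on $L$, and $\{c\log|t|,\pi^*f_i\}=X_{c\log|t|}(\pi^*f_i)=0$ since the monodromy $\phi$ is equivariant with respect to the toric action, so that the $f_i$ are $\phi$-invariant; this invariance is exactly what lets the $f_i$ descend to $Z$, so that $\pi^*f_i$ is defined on $Z\times(-\epsilon,\epsilon)$ and not merely on the cover $L\times[0,1]\times(-\epsilon,\epsilon)$.

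For the independence, away from $Z$ the form $\omega$ is genuinely symplectic, $d(c\log|t|)=\frac{c}{t}dt\neq 0$, and $d(\pi^*f_2)\wedge\cdots\wedge d(\pi^*f_n)$ is nonzero over the preimage of the interior of the Delzant polytope of $L$, which is open and dense; hence $d\hat F_1\wedge\cdots\wedge d\hat F_n\neq 0$ on a dense subset of $Z\times(-\epsilon,\epsilon)$. Along $Z$ the $b$-covector $c\,\frac{dt}{t}$ is nowhere zero as a section of ${}^bT^*M$ (this is where $c\neq 0$ is essential) and is not contained in the span of the leafwise differentials $df_i$, so $d\hat F_1\wedge\cdots\wedge d\hat F_n$ is again nonzero on a dense subset of $Z$. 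This establishes the first assertion, and restricting to any symplectic leaf of $Z$ recovers the original toric integrable system $(f_2,\dots,f_n)$ on $L$.

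For the equivalence in the second assertion, the forward implication is essentially a citation: if the toric action on $L$ extends to a toric action on the $b$-symplectic manifold $Z\times(-\epsilon,\epsilon)$, then $L$ is a symplectic leaf of its critical set $Z$, and Lemma \ref{lem:product} (Corollary 16 in \cite{GMPS}) forces $Z\cong L\times S^1$, i.e. the cosymplectic structure of $Z$ is the trivial product. For the converse, assume $Z=L\times S^1$ with the product cosymplectic structure, so that on $U=L\times S^1\times(-\epsilon,\epsilon)$ we may take $\omega=\frac{df}{f}\wedge d\theta+\pi_L^*\omega_L$. I would let $\mathbb{T}^{n-1}$ act on the $L$ factor by the given toric action and trivially on the remaining two factors, and let $S^1$ act by the flow of the $b$-Hamiltonian vector field of $c\log|t|$, which one computes to be $c\,\partial/\partial\theta$, i.e. rotation of the $S^1$ factor. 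These actions commute, are effective, and together define a $\mathbb{T}^n$-action on the $2n$-dimensional manifold $U$; it is $b$-Hamiltonian with moment map $\hat F$ — whose components Poisson commute by the first part — and has rank $n$, hence is toric and restricts on each symplectic leaf of $Z$ to the original toric action, so the toric structure of $L$ extends.

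The step I expect to be the main obstacle is the first one: pinning down the $b$-Hamiltonian vector fields of $c\log|t|$ and of the $\pi^*f_i$ for the form $\omega=\frac{df}{f}\wedge\pi^*\alpha+\pi^*\omega_Z$, and in particular confirming that the differentials $d\hat F_i$ remain independent as sections of ${}^bT^*M$ exactly along $Z$ rather than only on $Z\times(-\epsilon,\epsilon)\setminus Z$. By contrast the second assertion is short once Lemma \ref{lem:product} is invoked for one direction and the explicit product construction is carried out for the other. Throughout one must keep track of the standing hypothesis that $F$ is invariant under the monodromy (equivalently, that the $f_i$ descend to $Z$), without which $\hat F$ would only be defined on the cover $L\times[0,1]\times(-\epsilon,\epsilon)$.
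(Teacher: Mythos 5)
Your proposal is correct and follows essentially the same route as the paper: Poisson commutativity via the product/mapping-torus structure of $\omega=\frac{df}{f}\wedge\pi^*\alpha+\pi^*\omega_Z$, independence from the density of regular points on $L$ plus the independence of the pure $b$-differential $c\,\frac{dt}{t}$ from the leafwise ones, and Lemma \ref{lem:product} for the extension criterion. The only difference is that you spell out the converse direction of the second assertion (the explicit product $\mathbb{T}^{n-1}\times S^1$ action when $Z=L\times S^1$) in more detail than the paper, which leaves it implicit.
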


\begin{proof}

Observe that the functions $f_2, \dots, f_n$ define an integrable system on the cosymplectic manifold $Z$ as the gluing symplectomorphism that defines the mapping torus commutes with the torus action defined by $F$. So this torus action descends to the quotient $Z$ and the functions $f_i$ are well-defined on the mapping torus $Z$.
From the definition of $b$-symplectic form the projection $\pi$ is a Poisson map and thus
$\{ \pi^*(f_i), \pi^*(f_j)\}=\{ f_i, f_j\}=0$ for all $i,j\geq 2$. Observe also that functional independence on a dense set $W$ of $L$, of the functions  $f_2, \dots, f_n$ on $L$ (a factor of $U$) together with the functional independence of the pure $b$-function $c\log|t|$ from the functions $\pi^*(f_2), \dots, \pi^*(f_n)$ implies the functional independence on the dense open set $W\times I$ with the product topology.

Furthermore, the Poisson bracket $\{ c \log|t|, \pi^*(f_j)\}=0$ from the expression of $b$-symplectic structure. Thus the system $\hat{F}$ defines an integrable system on $Z\times (-\epsilon, \epsilon)$.

To conclude observe that the action-angle coordinates associated to the global toric action on $L$ extends to $Z$ (and thus to a neighborhood $Z\times (-\epsilon, \epsilon)$ if and only if the action extends to a toric action.
We now use Lemma \ref{lem:product} above to conclude that the toric structure extends to $Z$ if and only if the mapping torus is trivial, i.e., $Z=L\times [0,1]$.
This ends the proof of the theorem.
\end{proof}

Observe that given any cosymplectic compact manifold $Z$, then following the construction from Example 19 in \cite{GMP1}, $Z\times S^1$ admits a $b$-symplectic structure simply by considering the dual  $b$-Poisson structure (where $\pi$ is the corank regular Poisson structure associated to the cosymplectic structure and $X$ is a Poisson vector field transverse to the symplectic foliation in $Z$ as it was proved in \cite{GMP2}). The function $f$ is a function vanishing linearly. The critical locus of this $b$-Poisson structure has as many copies of the original $Z$ as zeros of the function $f$.

$$\Pi= f(\theta)X\wedge \frac{\partial}{\partial \theta}+\pi$$
The theorem above admits its compact version:

\begin{theorem}\label{thm:globalaa1}
The mapping $\hat{F}=(c\log|f(\theta)|, \pi^*(f_2), \dots, \pi^*(f_n))$ defines a $b$-integrable system on the  $b$-symplectic manifold $Z\times S^1$ thus extending the integrable system defined by the toric structure of $L$. The toric structure of $L$ extends to a toric structure on the $b$-symplectic manifold $Z\times S^1 $ if and only if the cosymplectic structure of $Z$ is trivial, i.e., $Z=L\times S^1$.
\end{theorem}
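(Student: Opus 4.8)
The plan is to run the proof of the (open) version above essentially verbatim, replacing the collar $Z\times(-\epsilon,\epsilon)$ by the closed $b$-symplectic manifold $Z\times S^1$ produced by Example 19 in \cite{GMP1}: on $Z\times S^1$ one has the $b$-Poisson bivector $\Pi=f(\theta)\,X\wedge\frac{\partial}{\partial\theta}+\pi$, where $\pi$ is the corank-one Poisson structure of the cosymplectic manifold $Z$, $X$ is the Reeb (Poisson) vector field of $Z$ transverse to its symplectic foliation (\cite{GMP2}), and $f\colon S^1\to\R$ vanishes linearly. As recalled there, the critical set is $Z\times f^{-1}(0)$, a disjoint union of copies of $Z$ — one per zero of $f$ — and near each of them $f(\theta)$ is a local defining function, so $c\log|f(\theta)|$ is a genuine (everywhere non-smooth along the critical set) global $b$-function, with $c$ the relevant modular period.

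First I would verify that $\hat F=(c\log|f(\theta)|,\pi^{*}f_{2},\dots,\pi^{*}f_{n})$ is a $b$-integrable system. Since the given toric action on $L$ commutes with the gluing symplectomorphism, the moment-map components $f_{2},\dots,f_{n}$ descend to $Z$ — they are constant along the Reeb flow, being pulled back from monodromy-invariant functions on $L$ — and form an integrable system on the cosymplectic manifold $Z$; as $\pi\colon Z\times S^1\to Z$ is Poisson, $\{\pi^{*}f_{i},\pi^{*}f_{j}\}=\pi^{*}\{f_{i},f_{j}\}_{Z}=0$, exactly as in the open case. For the remaining brackets, $\{c\log|f(\theta)|,\pi^{*}f_{j}\}=\Pi\big(d(c\log|f(\theta)|),d\pi^{*}f_{j}\big)$; here $d(c\log|f(\theta)|)=c\,\frac{f'(\theta)}{f(\theta)}\,d\theta$ is a multiple of $d\theta$, so the $\pi$-summand of $\Pi$ (which does not involve $d\theta$) contributes nothing, while the $f(\theta)\,X\wedge\partial_{\theta}$-summand pairs $d\theta$ with $\partial_{\theta}$ and $d\pi^{*}f_{j}$ with $X$ (the cross pairing vanishes since $\langle d\theta,X\rangle=0$), producing a multiple of $X(\pi^{*}f_{j})=0$. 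Finally, $d(c\log|f(\theta)|)$ is, as a section of ${}^{b}T^{*}(Z\times S^1)$, nowhere zero off the proper closed subset $\{f'(\theta)=0\}$ — in particular along the whole critical set, where $f'\neq0$ because $f$ has simple zeros — and it points in the $d\theta$-direction, transverse to $Z$; combined with $df_{2}\wedge\dots\wedge df_{n}\neq0$ on a dense subset of $L$ (hence of $Z$), this shows $d(c\log|f(\theta)|)\wedge\pi^{*}df_{2}\wedge\dots\wedge\pi^{*}df_{n}\neq0$ on a dense open subset of $Z\times S^1$ and on a dense subset of the critical set. Hence $\hat F$ is a $b$-integrable system, restricting on every symplectic leaf of the critical set to the toric integrable system $(f_{2},\dots,f_{n})$ of $L$.

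For the equivalence, the substantive implication is the ``only if'': if the toric action on $L$ extended to a $b$-Hamiltonian toric action on $Z\times S^1$, then $Z\times S^1$ would be a $b$-symplectic toric manifold and $L$ a symplectic leaf of its critical set, so Lemma \ref{lem:product} (\cite[Corollary 16]{GMPS}) would force the connected component of the critical set through $L$ — that is, $Z$ itself — to be a product $L\times S^1$, i.e. the monodromy of the cosymplectic mapping torus is trivial. Conversely, when $Z\cong L\times S^1$ one has $Z\times S^1\cong L\times\mathbb{T}^2$, and choosing the data of Example 19 so that the $b$-Hamiltonian vector field of $c\log|f(\theta)|$ generates a free $S^1$-action transverse to $L\times S^1$, the product of that $S^1$-action with the $T^{n-1}$-action on $L$ is a $b$-Hamiltonian toric action which restricts on $L$ to the given one; so the toric structure does extend.

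The step I expect to be the main obstacle is the vanishing $\{c\log|f(\theta)|,\pi^{*}f_{j}\}=0$: it hinges both on the precise Example-19 shape of $\Pi$ and on the $X$-invariance of the $f_{j}$, which is available exactly when the $f_{j}$ genuinely descend to $Z$ — i.e. when the toric action commutes with the monodromy (or $Z$ is already a product), the standing hypothesis of the construction and precisely the dichotomy underlying the ``if and only if''. A secondary point requiring care is that the critical set now has several components (one per zero of $f$), so density of the regular locus of $\hat F$ must be checked in each of them, not merely in $Z\times S^1$.
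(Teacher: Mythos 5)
Your proposal is correct and follows essentially the same route as the paper: the paper proves the open-collar version by exactly the steps you carry out (descent of $f_2,\dots,f_n$ to the mapping torus, Poisson-ness of the projection, vanishing of the brackets with the singular integral from the explicit form of the structure, density of the regular locus, and Lemma \ref{lem:product} for the toric dichotomy), and then states the compact version by transplanting that argument to the Example-19 structure on $Z\times S^1$, which is precisely what you do. Your explicit bivector computation of $\{c\log|f(\theta)|,\pi^*f_j\}$ and your two-sided treatment of the ``if and only if'' are, if anything, slightly more detailed than the paper's.
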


As a corollary we can detect situations in which the topological obstruction to global existence of action-angle coordinates lies in the non-triviality of the mapping torus defined by the critical set $Z$.

\begin{theorem}\label{thm:globalaa2}
Any $b$-integrable system  on $b$-symplectic manifold extending a toric system on a symplectic leaf of $Z$ does not admit global action-angle coordinates whenever the critical set $Z$ is not a trivial mapping torus $Z=L\times S^1$.
\end{theorem}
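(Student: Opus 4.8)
The statement is, in essence, the contrapositive of the extension criterion recorded in Theorem~\ref{thm:globalaa1} (via Lemma~\ref{lem:product}), upgraded from the specific construction there to an arbitrary $b$-integrable system of the prescribed type. The plan is to argue by contradiction. Suppose that such a system $F=(f_1,\dots,f_n)$ \emph{does} admit global action--angle coordinates, by which we mean that the semilocal normal form of Theorem~\ref{thm:aa} holds on a single $F$-saturated neighborhood $U$ of the whole critical set $Z$: there is a diffeomorphism $(\theta_1,\dots,\theta_n,\sigma_1,\dots,\sigma_n)\colon U\to \mathbb T^n\times B^n$ with $\omega|_U=\sum_{i=1}^{n-1}d\sigma_i\wedge d\theta_i+\frac{c}{\sigma_n}d\sigma_n\wedge d\theta_n$, the $\sigma_i$ depending only on $F$. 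I will show that this forces an effective $b$-Hamiltonian toric action on the $b$-symplectic manifold $(U,Z,\omega)$, and then invoke Lemma~\ref{lem:product} to conclude $Z\cong L\times S^1$, contradicting the hypothesis.

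The first step is to extract the toric action. From the normal form, $\sigma_1,\dots,\sigma_{n-1}$ are smooth and $\log|\sigma_n|$ is a genuine $b$-function, and these $n$ $b$-functions pairwise Poisson commute because they depend only on $F$. Their $b$-Hamiltonian vector fields are $\partial_{\theta_1},\dots,\partial_{\theta_{n-1}}$ and a nonzero constant multiple of $\partial_{\theta_n}$; each is complete and, after the standard rescaling, periodic of period one. Since they commute, their joint flow defines an action of $\mathbb T^n$ on $U$; it is $b$-Hamiltonian since the generating functions are pairwise Poisson-commuting $b$-functions, it is toric since $2n=\dim M$, and it is effective (indeed free) because in the action--angle chart it is just translation on the $\mathbb T^n$ factor of $\mathbb T^n\times B^n$. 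Thus $(U,Z,\omega)$ is an (open) $b$-symplectic manifold carrying an effective toric action, to which Lemma~\ref{lem:product} (Corollary~16 of \cite{GMPS}) applies: the presence of the toric action forces $Z\cong L\times S^1$, i.e.\ $Z$ is the trivial mapping torus of its symplectic leaf $L$. This contradicts the assumption that $Z$ is not a trivial mapping torus, so no such global action--angle coordinates can exist, proving the theorem.

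The delicate point --- and the only one requiring care --- is the claim that global action--angle coordinates genuinely yield a \emph{global, effective torus} action rather than merely an $\mathbb R^n$-action with nontrivial monodromy over $Z$; but this is precisely what ``global'' encodes, namely that the uniformization of periods performed in the proof of Theorem~\ref{thm:aa} can be carried out simultaneously over all of $Z$, so it is built into the hypothesis, and the remaining verifications (completeness, $b$-Hamiltonian character, effectiveness) are routine. I would also remark, for robustness, that the standing hypothesis that $F$ extends a toric system on the leaf $L$ ensures $L$ (hence $Z$) is compact and that the mapping-torus description $Z=(L\times[0,c])/\!\sim$ is available, so that ``$Z$ not a trivial mapping torus'' is a genuine, well-posed topological obstruction; this is what makes the present theorem the sought-for globalization of the phenomenon already visible in Theorems~\ref{thm:globalaa1} and~\ref{thm:globalaa2}.
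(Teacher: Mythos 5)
Your proposal is correct and follows essentially the same route as the paper: the paper derives Theorem \ref{thm:globalaa2} as a corollary of the argument for Theorem \ref{thm:globalaa1}, whose key step is exactly your reduction --- global action-angle coordinates near $Z$ produce an effective toric action on the $b$-symplectic manifold, and Lemma \ref{lem:product} (Corollary 16 of \cite{GMPS}) then forces $Z\cong L\times S^1$, contradicting the hypothesis. Your explicit extraction of the $\mathbb{T}^n$-action from the normal form of Theorem \ref{thm:aa} is a reasonable fleshing-out of a step the paper leaves implicit.
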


{ Below we show an example of a $b$-symplectic manifold $M$ of dimension $6$ an admits some $b$-integrable system which is not toric even though the leaves on the critical hypersurface are toric. Observe that the $b$-integrable system cannot define a toric action (and thus admit global action-angle coordinates) because of the topological structure of $Z$.}

\begin{ex}[Topological obstructions to semi-local action-angle coordinates]
{
Consider a product of spheres $S^2 \times S^2$ with coordinates $(h_1,\theta_2,h_2,\theta_2)$ and standard product symplectic form $\omega= dh_1 \wedge d\theta_1 + dh_2 \wedge d\theta_2$. The map
\begin{align*}
\varphi: S^2 \times S^2 &\longrightarrow S^2 \times S^2 \\
(p,q) &\longmapsto (q,p)
\end{align*}
is a symplectomorphism satisfying that $\varphi^2=\operatorname{Id}$. The induced map in homology swaps the generators of $H_2(S^2\times S^2)\cong H_2(S^2) \oplus H_2(S^2)$. This shows that $\varphi$ is not in the connected component of the identity, as this would imply that induced map in homology would act trivially \cite[Theorem 2.10]{Hat}. Thus, the mapping torus with gluing diffeomorphism $\varphi$ cannot be  a trivial product $S^2\times S^2 \times S^1$.

The pair of functions $F=(f_1,f_2)=(h_1+h_2,h_1h_2)$ are invariant with respect to $\varphi$ and hence descend to the mapping torus. Furthermore, they define { an integrable system (and in fact a toric action)} on $S^2\times S^2$, since they clearly Poisson commute and satisfy that $df_1\wedge df_2= (h_2-h_2)dh_1\wedge dh_2\neq 0$ almost everywhere. In particular,  by Remark \ref{rem:highdimen}, any $b$-symplectic manifold with critical set $Z$ admits a $b$-integrable system. However since the critical hypersurface is not a trivial product, any $b$-integrable system will not be toric in a neighborhood of $Z$.

By the discussion before the statement of Theorem \ref{thm:globalaa1}, the cosymplectic manifold $N$ can be realized as a connected component of a critical hypersurface of a compact $b$-symplectic manifold diffeomorphic to $M=N\times S^1$. Thus any $b$-integrable system in $M$ will not be toric even in a neighborhood of $Z$.

}
\end{ex}

Observe that with the magic  trick  of the desingularization  we obtain examples of folded-integrable systems without global action-angle coordinates. This is done  by applying Theorem \ref{thm:globalaa2} and the behaviour of torus actions under desingularization studied in \cite{convexitygmw}.

\begin{theorem}
  Let $F$ be a folded integrable system obtained by desingularization of a $b$-integrable system, if the critical set $Z$ of the original $b$-symplectic structure is not a trivial mapping torus, then the folded integrable system $F$ does not admit global action-angle coordinates.
\end{theorem}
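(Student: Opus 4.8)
\emph{Proof proposal.} Write $F_{0}$ for the $b$-integrable system on the $b$-symplectic manifold $(M,\omega)$ of which $F$ is a desingularization, so that $F$ lives on the folded symplectic manifold $(M,\omega_{\varepsilon})$ produced by Theorem~\ref{thm:desing}. The plan is to argue by contradiction, transporting the toric rigidity of the $b$-symplectic side (Lemma~\ref{lem:product} and Theorem~\ref{thm:globalaa2}) across the desingularization. The first point I would record is that desingularization does not touch $Z$: by Theorem~\ref{thm:desing} the forms $\omega$ and $\omega_{\varepsilon}$ agree outside an $\varepsilon$-neighbourhood of $Z$, and (as already used in Section~6) the induced cosymplectic structure on $Z$ is unchanged, the modular $S^{1}$-action simply becoming the null line bundle of $\omega_{\varepsilon}$. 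In particular the mapping torus structure of $Z$ over its symplectic leaf $L$ is literally the same whether $Z$ is viewed inside $(M,\omega)$ or inside $(M,\omega_{\varepsilon})$; by hypothesis it is nontrivial, $Z\not\cong L\times S^{1}$.

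Next, suppose for contradiction that $F$ admits global action-angle coordinates. By definition this means that the semilocal angle coordinates of Theorem~\ref{AA} around the regular Liouville tori inside $Z$ patch to globally defined folded action coordinates and $\mathbb{T}^{n}$-valued angle coordinates on a saturated neighbourhood $\mathcal{U}$ of $Z$; equivalently, the fundamental vector fields of the angle coordinates integrate to an effective Hamiltonian $\mathbb{T}^{n}$-action on $\mathcal{U}\subset(M,\omega_{\varepsilon})$, i.e.\ a toric action whose critical set is $Z$. Effectiveness and the fact that the torus has the correct rank $n=\tfrac12\dim M$ follow from the independence of $df_{1},\dots,df_{n}$ on a dense subset of $M$ and of $Z$, which is part of the definition of a folded integrable system.

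I would then invoke the equivariant behaviour of desingularization. Theorem~6.1 of \cite{convexitygmw} gives an equivariant desingularization procedure for effective torus actions, realizing desingularization and its inverse (``resingularization'') as a correspondence between effective toric $b$-symplectic structures and effective toric folded symplectic structures which is the identity away from $Z$. Applying resingularization to the toric action on $(M,\omega_{\varepsilon})|_{\mathcal{U}}$ yields an effective Hamiltonian $\mathbb{T}^{n}$-action on $(M,\omega)|_{\mathcal{U}}$ preserving $\omega$, with the same critical set $Z$, and restricting on $Z$ to the cosymplectic-compatible action whose modular circle is the $S^{1}$ factor of the mapping torus. Thus $(M,\omega)$ carries a toric action in a neighbourhood of $Z$. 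Now Lemma~\ref{lem:product} (Corollary~16 of \cite{GMPS})—whose proof is local near $Z$ and therefore applies verbatim to $\mathcal{U}$—forces $Z\cong L\times S^{1}$, contradicting $Z\not\cong L\times S^{1}$. Hence $F$ admits no global action-angle coordinates. (Alternatively, once a toric action on $(M,\omega)$ near $Z$ is in hand one may conclude through Theorem~\ref{thm:globalaa2} directly, since such an action would itself produce global action-angle coordinates for a $b$-integrable system extending a toric system on a leaf of $Z$, which that theorem forbids whenever $Z$ is not a trivial mapping torus.)

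The main obstacle is the third step: making rigorous the claim that global action-angle coordinates on the folded side yield, after resingularization, a genuine $\omega$-preserving toric action near $Z$ on the $b$-symplectic side. One has to check that the patched $\mathbb{T}^{n}$-action on $(M,\omega_{\varepsilon})$ is the \emph{desingularization of a torus action}, not merely an abstract torus action—this is exactly where the equivariance of \cite{convexitygmw} is essential, since $\omega_{\varepsilon}$ differs from $\omega$ precisely on the region where the action is most constrained—and that it restricts correctly on $Z$ so that Lemma~\ref{lem:product} and Theorem~\ref{thm:globalaa2} genuinely apply. Everything else is bookkeeping about neighbourhoods of $Z$ and the fact, already exploited in Section~6, that desingularization is supported away from $Z$.
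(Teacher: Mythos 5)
Your argument is essentially the paper's own: the paper proves this theorem only via the one-line remark preceding it, namely that the statement follows from Theorem \ref{thm:globalaa2} together with the behaviour of torus actions under desingularization from \cite{convexitygmw}, which is exactly the contradiction you run (transport the putative toric action back to the $b$-symplectic side and invoke Lemma \ref{lem:product}). Your write-up is in fact more detailed than the paper's, and the step you flag as delicate --- that the folded toric action near $Z$ resingularizes to an $\omega$-preserving toric action on the $b$-symplectic side --- is precisely the point the paper delegates to \cite{convexitygmw} without further comment.
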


Let us finish this article with a couple of concluding remarks:

\begin{itemize}

\item  For symplectic manifolds the obstructions to global action-angle coordinates started with Duistermaat in his seminal paper \cite{duistermaat} where Duistermaat related the existence of obstructions to the existence of monodromy which in its turn was naturally associated to the existence of singularities.

In this article we have concluded that for a singular symplectic manifold there are topological obstructions for existence of global action-angle coordinates that are detectable at first sight: The critical set $Z$ has to be  a trivial mapping torus $Z=L\times [0,1]$ thus the existence of monodromy associated to this mapping torus is also an obstruction.

\item Furthermore, the existence of action-angle coordinates yields a free action of a torus in the neighbourhood of a regular torus action thus the existence of isotropy groups for the candidate of torus action defining the system, automatically implies that the locus with non-trivial isotropy groups is singular for the integrable system. The same holds for a sub-circle. In particular:

\end{itemize}

\begin{corollary} Let $F$ be a $b$-integrable system as in Proposition \ref{s1action} on a $b$-symplectic manifold and denote by $T$ the union of the exceptional orbits of the $S^1$-action defined by $c\log |t|$.  Then the system has singularities at the set $T$.
\end{corollary}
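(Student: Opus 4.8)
The plan is to prove the contrapositive pointwise: no point of $T$ is a regular point of $F$. Since $T\subset Z$ and $b$-Hamiltonian vector fields are tangent to $Z$, it then follows directly from Definition \ref{def:pbintegrable} that every point of $T$ is a singular point of the system, which is the assertion.

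First I would fix $p\in T$, so that $p$ lies on an orbit of the flow of $X_{f_1}=X_{c\log|t|}$ with non-trivial finite stabiliser $\mathbb Z_m$, $m\ge 2$, and assume for contradiction that $p$ is regular. The integral manifold $\mathcal F_p$ through $p$ lies inside $Z$, and in the present situation it is automatically compact: by Proposition \ref{s1action} the vector fields $X_{f_2},\dots,X_{f_n}$ are tangent to the compact symplectic leaves of $Z$, where through a regular point their joint orbit sits in a compact level set of $(f_2,\dots,f_n)$, while the remaining direction $X_{f_1}$ is periodic. Hence $\mathcal F_p$ is a Liouville torus $\mathbb T^n\subset Z$ and the action--angle theorem (Theorem \ref{thm:aa}) applies: there is a neighbourhood $U$ of $\mathcal F_p$ with coordinates $(\theta_1,\dots,\theta_n,\sigma_1,\dots,\sigma_n)\colon U\to\mathbb T^n\times B^n$ in which $\omega$ takes the stated normal form and the $\sigma_i$ are functions of $F$ alone. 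In particular the vector fields $\partial/\partial\theta_1,\dots,\partial/\partial\theta_n$ integrate to a \emph{free} $\mathbb T^n$-action on $U$ (translations on the first factor), whose orbits are the Liouville tori.

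The key step is then to locate $X_{f_1}$ inside this free torus action. Since $f_1$ is one of the components of $F$ and the $\sigma_i$ generate the functions of $F$ near $\mathcal F_p$, the field $X_{f_1}$ is tangent to the fibres $\mathbb T^n\times\{b\}$ and commutes with all $X_{f_j}$, hence is invariant under the $\mathbb T^n$-action; a $\mathbb T^n$-invariant vector field tangent to the orbits of that free action has the form $X_{f_1}=\sum_{i=1}^n a_i(b)\,\partial/\partial\theta_i$ with $a_i$ constant along fibres. On $\mathcal F_p$ this is a linear flow on $\mathbb T^n$, which is periodic because $X_{f_1}$ generates an $S^1$-action, so $(a_1,\dots,a_n)$ is a real multiple of a primitive integer vector $(k_1,\dots,k_n)$; thus the $S^1$-action generated by $X_{f_1}$ on $U$ is a reparametrisation of the restriction of the free $\mathbb T^n$-action to the subcircle $\{s(k_1,\dots,k_n)\}\subset\mathbb T^n$. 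Such a restriction is free, and a reparametrisation $z\mapsto z^d$ of it can only introduce a \emph{global} stabiliser $\mathbb Z_d$; in neither case does the resulting $S^1$-action on $U$ have exceptional orbits. This contradicts $p\in U\cap T$, so $p$ cannot be regular, and since $p\in T$ was arbitrary the system has singularities along all of $T$.

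The step I expect to require the most care is this last one: checking rigorously that $X_{f_1}$ is $\mathbb T^n$-invariant, hence ``linear'' on each Liouville torus, and that its periodicity forces the associated $S^1$-action on $U$ to be the restriction (up to reparametrisation) of the free torus action coming from Theorem \ref{thm:aa}; in particular one must use that an exceptional orbit means a stabiliser strictly larger than the generic one, so that the possibility of a globally non-effective $S^1$-action (the $z\mapsto z^d$ case) does not spoil the conclusion. The compactness of $\mathcal F_p$ needed to invoke the action--angle theorem should be a routine consequence of the mapping-torus structure of $Z$ with compact leaves together with the periodicity of $X_{f_1}$, but it deserves an explicit line.
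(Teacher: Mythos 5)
Your argument is correct and is essentially the paper's own: the paper derives this corollary from the one-line observation that action--angle coordinates around a regular Liouville torus produce a \emph{free} torus action (and hence a free sub-circle action up to a global finite stabiliser), which is incompatible with $p$ lying on an exceptional orbit of the $S^1$-action generated by $c\log|t|$. Your writeup simply fleshes out that remark --- including the linearisation of $X_{f_1}$ on the tori and the handling of the $z\mapsto z^d$ reparametrisation --- and, like the paper, it implicitly relies on the compactness of the fibre through $p$ needed to invoke Theorem \ref{thm:aa}, a hypothesis you rightly note deserves an explicit line.
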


Thus not only the topology of the critical set $Z$ yields an obstruction to existence of global action-angle coordinates but it also detects singularities of integrable systems. In particular along the exceptional orbits for the transverse $S^1$-action given by Proposition \ref{s1action}.
This motivates us to study singularities of integrable systems on singular symplectic manifolds, study which we will pursue in a different article.


\begin{thebibliography}{99}

\bibitem{Arn} V. I. Arnold. \textit{Mathematical Methods of Classical Mechanics}. Grad. Texts in Math. 60, Springer-Verlag, Berlin (1978).

\bibitem{anacannas} A. Cannas da Silva. \emph{Fold-forms for four-folds.} J. Symplectic Geom. 8 (2010), no. 2, 189-203.

%\bibitem{bergeron}
%H. Bergeron, E. Czuchry, J.P Gazeau and P. Małkiewicz,
%\emph{Integrable Toda system as a quantum approximation to the anisotropy of the mixmaster universe. }
%Phys. Rev. D 98 (2018), no. 8, 083512, 8 pp.

\bibitem{BBR}
v. Bazhanov, A. Bobenko, N. Reshetikhin.
\emph{Quantum discrete sine-Gordon model at roots of 1: integrable quantum system on the integrable classical background}.
Comm. Math. Phys. 175 (1996), no. 2, 377--400.

\bibitem{BKM} R. Braddell, A. Kiesenhofer, E. Miranda. \emph{Cotangent models for group actions in $b$-Poisson manifolds}, arXiv:1811.11894.

\bibitem{bquantization2} {M.Braverman, Y. Loizides, Y. Song. \emph{Geometric quantization of b-symplectic manifolds}. arXiv:1910.10016.}

\bibitem{CGW} A. Cannas da Silva, V. Guillemin, C. Woodward. \textit{On the unfolding of folded
symplectic structures}. Math. Res. Lett., 7(1):35-53, 2000.

\bibitem{anarita} A. Cannas da Silva, V. Guillemin, A. R. Pires. \emph{Symplectic origami}. Int. Math. Res. Not. IMRN 2011, no. 18, 4252-4293.

\bibitem{Delzant} T.\,Delzant. \textit{Hamiltoniens p\'{e}riodiques et images convexes de l'application moment}. Bulletin de la SMF tome 116 num. 3 p. 315-339, 1988.

\bibitem{DKM}
A. \ Delshams, A. \ Kiesenhofer, E. Miranda, \emph{Examples of integrable and non-integrable systems on singular symplectic manifolds}. J. Geom. Phys. 115 (2017), 89--97.  .


\bibitem{duistermaat} J.J  \ Duistermaat, \emph{On global action-angle
coordinates}. Communications on pure and applied mathematics,
{\bf{23}}, (1980), 687--706.

\bibitem{einstein} A. \ Einstein, \emph{Zum Quantensatz von Sommerfeld und Epstein}.  Verhandlungen der Deutschen Physikalischen Gesellschaft. 19: 82-92 (1917).

\bibitem{FMM} P. Frejlich, D. Mart\'inez, E. Miranda. \emph{A note on symplectic topology of $b$-manifolds}. J. Symplectic Geom. 15 (2017), no. 3, 719-739.

\bibitem{FT} A.T. Fomenko, V.V. Trofimov. \emph{Integrable Systems on Lie Algebras and Symmetric
Spaces}. Gordon and Breach, Amsterdam (1988).


	\bibitem{GL}
	M. Gualtieri, S. Li. \emph{Symplectic groupoids of log symplectic manifolds}. International Mathematics Research Notices, 2014(11), 3022-3074.
		
	\bibitem{log}
M. Gualtieri, S. Li, A. Pelayo, T. Ratiu.
\emph{The tropical momentum map: a classification of toric log symplectic manifolds.}
Math. Ann. 367 (2017), no. 3--4, 1217--1258.


\bibitem{GMP1} V.\,Guillemin, E.\,Miranda,  A.R.\,Pires. \textit{Symplectic and Poisson geometry on $b$-manifolds}. Adv. Math. 264 (2014), 864-896.

\bibitem{GMPS}
V. Guillemin, E. Miranda, A.R. Pires, G. Scott. \emph{Toric actions on b-symplectic manifolds}. Int. Math. Res. Not. IMRN 2015, no. 14, 5818--5848.

\bibitem{GMP2}
V. Guillemin, E. Miranda, A.R. Pires. \emph{Codimension one symplectic foliations and regular Poisson structures}. Bull. Braz. Math. Soc. (N.S.) 42 (2011), no. 4, 607--623.


\bibitem{Deblog} V.\,Guillemin, E.\,Miranda, J.\,Weitsman. \textit{Desingularizing $b^m$-symplectic structures}.
 Int. Math. Res. Not. IMRN 2019, no. 10, 2981--2998.

\bibitem{convexitygmw} V.\,Guillemin, E.\,Miranda, J.\,Weitsman. \textit{Convexity of the moment map image for torus actions on $b^m$-symplectic manifolds}. Phil. Trans. Roy. Soc. A, 2018 376 20170420; DOI: 10.1098/rsta.2017.0420.

 \bibitem{bquantization1} V. Guillemin, E. Miranda and J. Weitsman, \emph{On geometric quantization of $b$-symplectic manifolds},{ Adv. Math.} {\bf 331}, 941-951 (2018)


 \bibitem{bmquantization} V. Guillemin, E. Miranda, J. Weitsman. \emph{On geometric quantization of $b^m$-symplectic manifolds}.{ Math. Z.}, to appear (2020)

 \bibitem{guilleminsternberg}V. Guillemin and S. Sternberg, \emph{ The Gelfand-Cetlin system and quantization of the complex flag manifolds.} J. Funct. Anal. 52 (1983), no. 1, 106--128.

\bibitem{hamilton} M. Hamilton. \emph{Locally toric manifolds and singular Bohr-Sommerfeld leaves}. Mem. Amer. Math. Soc. 207 (2010), no. 971, vi+60 pp. ISBN: 978-0-8218-4714-5

\bibitem{hamiltonmiranda} M. Hamilton, E.  Miranda. \emph{Geometric quantization of integrable systems with hyperbolic singularities.} Ann. Inst. Fourier (Grenoble) 60 (2010), no. 1, 51--85.

\bibitem{Hat} A. Hatcher. \emph{Algebraic topology}. Cambridge Univ. Press, Cambridge, 2002.

\bibitem{H} D. Hockensmith. \textit{A classification of toric, folded-symplectic manifolds}. PhD Thesis arXiv:1511.08108, 2015.

\bibitem{HKKR}
T. Hoffmann, J. Kellendonk, N. Kutz, N. Reshetikhin.
\emph{Factorization dynamics and Coxeter-Toda lattices. }
Comm. Math. Phys. 212 (2000), no. 2, 297–321.

\bibitem{holm} T.S. Holm, A.R. Pires. \emph{ The fundamental group and Betti numbers of toric origami manifolds.} Algebr. Geom. Topol. 15 (2015), no. 4, 2393--2425.
%
\bibitem{holm2}
T.S. Holm, A.R. Pires. \emph{The topology of toric origami manifolds}. Math. Res. Lett. 20 (2013), no. 5, 885--906.
%

\bibitem{KR}
R. M. Kashaev, and N. Reshetikhin,
\emph{Affine Toda field theory as a 3-dimensional integrable system.}
Comm. Math. Phys. 188 (1997), no. 2, 251–266.

\bibitem{K} A.\,Kiesenhofer. \textit{Integrable systems on \text{b}-symplectic manifolds}. PhD Thesis, October 2016.

\bibitem{KMS} A.\,Kiesenhofer, E.\,Miranda, G.\,Scott. \textit{Action-angle variables and a KAM theorem for $b$-Poisson manifolds}. Journal des Mathématiques Pures et Appliquées, J. Math. Pures Appl. (9) 105 (2016), no. 1, 66-85.

\bibitem{KM} A. \ Kiesenhofer, E. \ Miranda. \emph{Cotangent models for integrable systems}. Comm. Math. Phys. 350 (2017), no. 3, 1123–1145.

\bibitem{KM2} A. \ Kiesenhofer, E. \ Miranda. \emph{Noncommutative integrable systems on $b$-symplectic manifolds.} Regul. Chaotic Dyn. 21 (2016), no. 6, 643--659.

\bibitem{LMV} C.\,Laurent-Gengoux, E.\,Miranda, P.\,Vanhaecke. \textit{Action-angle Coordinates for Integrable Systems on Poisson Manifolds}. International Mathematics Research Notices, Volume 2011, Issue 8, 1 January 2011, pages 1839–1869.

\bibitem{marle} P. Libermann, C.M. Marle. \emph{Symplectic geometry and analytical mechanics.} Translated from the French by Bertram Eugene Schwarzbach. Mathematics and its Applications, 35. D. Reidel Publishing Co., Dordrecht, 1987. xvi+526 pp.

\bibitem{MO} I. Marcut, B. Osorno. \emph{On cohomological obstructions for the existence of log-symplectic structures}. J.
Symplectic Geom. 12 (2014), no. 4, 863-866



\bibitem{M} J. Martinet. \textit{Sur les singularit\'es des formes diff\'erentielles}. Ann. Inst. Fourier (Grenoble) 20 1970 fasc.
1, 95-178.

\bibitem{Melrose} R. Melrose. \textit{Atiyah-Patodi-Singer Index Theorem} (book). Research Notices in Mathematics, A.K. Peters, Wellesley, 1993.

%\bibitem{MTh} E. Miranda. \textit{On symplectic linearization of singular Lagrangian foliations}. Ph.D. Thesis, Universitat de Barcelona, 2003.
%
%\bibitem{central} E. Miranda,
%\emph{Integrable systems and group actions.}
%Cent. Eur. J. Math. 12 (2014), no. 2, 240--270.%
%
%\bibitem{arnaueva} E.Miranda, A. Planas. \emph{Equivariant classification of bm-symplectic surfaces.} Regul. Chaotic Dyn. 23 (2018), no. 4, 355--371.

\bibitem{arnaueva} E. Miranda, A. Planas. \emph{Equivariant classification of $b^m$-symplectic surfaces}.  Regul. Chaotic Dyn. 23 (2018), no. 4, 355--371.

\bibitem{arnaueva2} E. Miranda, A. Planas. \emph{A KAM theorem for singular symplectic manifolds}, booklet in preparation, 2021.

\bibitem{mps}E. Miranda, F. Presas,  R. Solha. \emph{Geometric quantization of almost toric manifolds}. J. of Symplectic Geometry, 18-4 (2020), 1147–1168.

\bibitem{NestandTsygan} R.\ Nest, B.\ Tsygan. \emph{Formal deformations of symplectic manifolds with boundary}. J.\ Reine Angew.\ Math.\ 481, 1996, pp.\ 27--54.



\bibitem{radko} O.\ Radko. \emph{A classification of topologically stable Poisson structures on a compact oriented surface}. Journal of Symplectic Geometry, 1, no.\ 3, 2002, pp.\ 523--542.

%\bibitem{TP} A.R.\,Pires. \textit{ Origami manifolds }. thesis (Ph. D.) Massachusetts Institute of Technology, 2010.

\bibitem{S} G.\,Scott. \textit{The Geometry of $b^k$ Manifolds}. J. Symplectic Geom. 14 (2016), no. 1, 7195.

\bibitem{quantum} M.
 Semenov-Tian-Shansky, \emph{Quantum integrable systems,}
Séminaire Bourbaki : volume 1993/94, exposés 775-789, Astérisque no. 227  (1995), Talk no. 788, p. 365-387


\bibitem{Wass} A. Wasserman. \emph{Equivariant differential topology}. Topology 8 (1969), 127-150.

\bibitem{W} A.\,Weinstein. \textit{Lectures on symplectic manifolds}. Published for the Conference Board of the Mathematical Sciences by the American Mathematical Society (1977), Providence. R.I.

    \bibitem{weitsman} J. Weitsman. \emph{Non-abelian symplectic cuts and the geometric quantization of non-compact manifolds}. Letters in Mathematical Physics 56 (2001) 31-40.


\end{thebibliography}
\end{document}